\DeclarePairedDelimiter{\ceil}{\lceil}{\rceil}
	\def\MR#1{}
\newcommand{\bN}{\mathbb{N}}    
\newcommand{\bZ}{\mathbb{Z}}    
\newcommand{\bQ}{\mathbb{Q}}    
\newcommand{\bC}{\mathbb{C}}    
\newcommand{\bF}{\mathbb{F}}    
\newcommand{\cA}{\mathcal{A}}   
\newcommand{\cC}{\mathcal{C}}   
\newcommand{\bD}{\mathrm{D^b}}  
\newcommand{\cM}{\mathcal{M}}   
\newcommand{\cO}{\mathcal{O}}   
\newcommand{\bP}{\mathbb{P}}    
\newcommand{\disc}{\mathrm{d}}	
\newcommand{\mukaiH}{\widetilde{H}} 
\newcommand{\PGL}{\mathrm{PGL}}	
\newcommand{\Pic}{\mathrm{Pic}}	
\newtheorem*{thm*}{Theorem}
\newtheorem*{prop*}{Proposition}
\newtheorem*{cor*}{Corollary}
\newtheorem{thm}{Theorem}[section]
\newtheorem{prop}[thm]{Proposition}
\newtheorem{cor}[thm]{Corollary}
\newtheorem{lemma}[thm]{Lemma}
\numberwithin{equation}{section}
\theoremstyle{definition}
\newtheorem{rmk}[thm]{Remark}
\begin{document}
\title{New rational cubic fourfolds arising from Cremona transformations}
\author{Yu-Wei Fan
    \and Kuan-Wen Lai}
\date{}

\newcommand{\ContactInfo}{{
\bigskip\footnotesize

\bigskip
\noindent Y.-W.~Fan,
\textsc{Yau Mathematical Sciences Center, Tsinghua University\\
Jinchunyuan West Bldg., Room 302\\
Beijing 100084, China}\par\nopagebreak
\noindent\textsc{Email:} \texttt{yuweifanx@gmail.com}

\bigskip
\noindent K.-W.~Lai,
\textsc{Mathematisches Institut der Universit\"at Bonn\\
Endenicher Allee 60, 53121 Bonn, Deutschland}\par\nopagebreak
\noindent\textsc{Email:} \texttt{kwlai@math.uni-bonn.de}
}}

\maketitle
\thispagestyle{titlepage}

\begin{abstract}
Are Fourier--Mukai equivalent cubic fourfolds birationally equivalent? We obtain an affirmative answer to this question for very general cubic fourfolds of discriminant $20$, where we produce birational maps via the Cremona transformation defined by the Veronese surface. By studying how these maps act on the cubics known to be rational, we surprisingly found new rational examples.
\end{abstract}

\setcounter{tocdepth}{3}
\tableofcontents

\section{Introduction}
\label{sect:intro}

For every smooth complex cubic hypersurface $X\subseteq\bP^5$, that is, a cubic fourfold, its bounded derived category of coherent sheaves $\bD(X)$ contains a full triangulated subcategory called the \emph{K3 category of $X$}:
$$
    \cA_X\colonequals
    \left<\cO_X,\cO_X(1),\cO_X(2)\right>^\perp
    \subseteq\bD(X).
$$
While the K3 category of a very general cubic fourfold is \emph{not} equivalent to $\bD(S)$ for any K3 surfaces $S$, such an equivalence does hold for certain special cubic fourfolds and, in these special cases, some of the cubics are known to be birational to $\bP^4$. This motivates Kuznetsov to conjecture that a cubic fourfold is rational if and only if its K3 category is equivalent to $\bD(S)$ for some K3 surface $S$; see \cite{Kuz10}*{Conjecture~1.1}.

When the K3 category $\cA_X$ is realizable by a polarized K3 surface of degree $14$, Beauville and Donagi proved that a very general such $X$ is rational \cite{BD85}. For the limiting cases, the rationality was proved by Bolognesi--Russo--Staglian\`o \cite{BRS19}, as well as Auel \cite{Aue22}, while the same result can also be derived as a consequence from the later work \cite{KT19}. In the cases that $\cA_X$ is realizable by a polarized K3 surface of degrees $26$, $38$, or $42$, the rationality of $X$ was proved by Russo--Staglian\`o \cites{RS18,RS19,RS23} by appealing to \cite{KT19}.

To what extent does the K3 category determine the birational geometry of a cubic fourfold? For a very general cubic $X$, it is known that a cubic $X'$ is isomorphic to $X$ if and only if their K3 categories are equivalent \cite{Huy17}*{Theorem~1.5~(i)}. For special cubic fourfolds, do equivalences between K3 categories guarantee the existence of \emph{birational maps} between them \cite{MS19}*{Question~3.25}?

In this paper, we focus on the cubic fourfolds containing a Veronese surface $V\subseteq\bP^5$. Let $\cC = [U/\PGL_6(\bC)]$ denote the moduli space of cubic fourfolds, where $U\subseteq|\cO_{\bP^5}(3)|$ is the subset of smooth cubics. Then the cubics containing (a degeneration of) $V$ determine a divisor $\cC_{20}\subseteq\cC$.
On the other hand, the system of quadrics containing $V$ defines a \emph{Cremona transformation} of $\bP^5$, i.e. a birational map
$$\xymatrix{
	F_V\colon\bP^5\ar@{-->}[r]^-\sim & \bP^5.
}$$
which is an involution upon composing with a projective transformation \cite{CK89}*{Theorem~3.3}.
Our first main result is as follows.

\begin{thm}[= Theorem~\ref{thm:cremonaFM}]
\label{thm:birationalFM-Partners}
By taking $X\subseteq\bP^5$ to its proper image $X'\colonequals F_V(X)\subseteq\bP^5$, the map $F_V$ induces a birational involution
\[\xymatrix{
    \sigma_V\colon\cC_{20}\ar@{-->}[r]^-\sim & \cC_{20}
}\]
such that for a very general $X\in\cC_{20}$, the image $X'$ appears as the unique cubic fourfold such that $\cA_X\cong\cA_{X'}$ and $X\ncong X'$.
\end{thm}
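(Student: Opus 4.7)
The plan is to split the theorem into two essentially independent parts: a birational-geometric verification that $F_V$ induces a self-map of $\cC_{20}$, followed by a Hodge- and lattice-theoretic analysis showing that the resulting $X'$ is the unique nontrivial cubic fourfold Fourier--Mukai partner of $X$.

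For the first part, I would resolve the indeterminacy of $F_V$ by blowing up $V$. Let $\pi\colon\widetilde{\bP}^5\to\bP^5$ denote this blowup, with exceptional divisor $E$ and hyperplane class $H$. Since $V$ is smooth and cut out scheme-theoretically by quadrics, the linear system $|2\pi^{*}H-E|$ is base-point free on $\widetilde{\bP}^5$, so $F_V$ is resolved to a morphism $\Phi$ there. For a smooth cubic $X$ containing $V$, the strict transform is $\widetilde{X}=3\pi^{*}H-E$ (multiplicity one along $V$, as $V\not\subset\mathrm{Sing}(X)$), and an intersection-theoretic computation on $\widetilde{\bP}^5$ shows that $\Phi(\widetilde{X})$ is again a cubic hypersurface $X'$. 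Smoothness of $X'$ for very general $X$ follows by a Bertini-style openness argument, and the fact that $X'$ contains a Veronese surface (hence $X'\in\cC_{20}$) reflects the known fact that $F_V$, up to projective transformation, is an involution swapping two Veronese surfaces in the source and target $\bP^5$'s. Thus $\sigma_V$ is well defined, and its involution property is inherited from that of $F_V$.

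For the second part, I would analyze the Mukai lattice $\mukaiH(\cA_X)$, an integral lattice of signature $(4,20)$ carrying a weight-two Hodge structure. For very general $X\in\cC_{20}$, its algebraic part is a primitive rank-two sublattice of discriminant $20$, and cubic fourfold Fourier--Mukai partners of $X$ correspond to orbits of primitive embeddings of this sublattice into the ambient Mukai lattice under the action of its isometry group. A class-number-style count, in the spirit of Hosono--Lian--Oguiso--Yau and its adaptation to cubic fourfolds by Huybrechts, should yield exactly two such orbits at discriminant $20$, giving one nontrivial partner. That the geometrically constructed $X'=F_V(X)$ is such a partner can be checked by comparing $\mukaiH(\cA_X)$ and $\mukaiH(\cA_{X'})$ through the Hodge correspondence supported on the graph of $F_V$; that $X'\not\cong X$ generically follows because $F_V$ does not preserve the projective equivalence class of the pair $(X,V)$. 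The two together force $X'$ to be the unique nontrivial FM partner.

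The main obstacle I anticipate lies in the second part: obtaining the exact count of Fourier--Mukai partners at discriminant $20$ is a delicate lattice-embedding calculation, and verifying the FM equivalence $\cA_X\simeq\cA_{X'}$ directly (rather than only by counting) would require producing a genuine Fourier--Mukai kernel, plausibly supported on the incidence correspondence of $F_V$. The first part, by contrast, should reduce to concrete computations on the blowup of $\bP^5$ along $V$.
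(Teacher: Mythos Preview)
Your proposal tracks the paper closely in spirit: resolve $F_V$ via the blowup along $V$, check that cubics through $V$ map to cubics through $V'$, count Fourier--Mukai partners at discriminant $20$ (the paper does this in Proposition~\ref{prop:FMPartner}, getting exactly two), and identify $X'$ with the nontrivial one. The FM equivalence itself is established, as you suggest, through the blowup formula on transcendental lattices (see \eqref{eqn:transcendentalIsom}).

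The genuine gap is your justification that $X\ncong X'$. Saying ``$F_V$ does not preserve the projective equivalence class of the pair $(X,V)$'' does not settle the question: even if $F_V$ moves the pair, there could still be an isomorphism $g\colon X\xrightarrow{\sim}X'$ that has nothing to do with $F_V$ or with Veronese surfaces. Since the count only tells you there are \emph{at most} two FM partners, you must rule out the possibility that $\sigma_V$ is the identity map on $\cC_{20}$, and this requires an actual invariant distinguishing $X$ from $X'$.

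The paper's argument for this step is quite specific and not easy to replace. It computes explicitly (Lemma~\ref{lemma:times9}) that the Hodge isometry $T(X)\cong T(X')$ induced by $f_V$ acts on the discriminant group $\disc T(X)\cong\bZ/20\bZ$ as multiplication by $9$. If there were also a projective isomorphism $g\colon X\cong X'$, it would induce a second Hodge isometry acting as multiplication by $\pm 1$ on the discriminant. Composing the two would give a Hodge self-isometry of $T(X)$ acting as $\pm 9$ on $\disc T(X)$; but for very general $X\in\cC_{20}$ the only Hodge isometries of $T(X)$ are $\pm\mathrm{id}$ (Lemma~\ref{lemma:isometryT(X)}), a contradiction. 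This discriminant-group computation, carried out via the explicit change of basis in Lemmas~\ref{lemma:veroLine} and~\ref{lemma:transRule}, is the crux you are missing.

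A minor correction: for very general $X\in\cC_{20}$ the algebraic part $N(\cA_X)$ of the Mukai lattice has rank three (it contains the $A_2$ coming from $\cO_{\mathrm{line}}(i)$ plus a rank-one piece), not rank two.
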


Cubic fourfolds with equivalent K3 categories are called \emph{Fourier--Mukai partners}.
Note that, for a very general $X\in\cC_{20}$, the map $F_V$ restricts as a birational map between $X$ and its proper image $X'$, so Theorem~\ref{thm:birationalFM-Partners} implies immediately the following.

\begin{cor}
\label{cor:birationalK3Cat}
For very general cubics $X, X'\in\cC_{20}$, if they are Fourier--Mukai partners, then they are birational to each other.
\end{cor}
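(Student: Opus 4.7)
The plan is to deduce the corollary directly from Theorem~\ref{thm:birationalFM-Partners} together with the parenthetical observation preceding it: for very general $X \in \cC_{20}$, the Cremona transformation $F_V$ restricts to a birational map between $X$ and its proper image $X' = F_V(X) \subseteq \bP^5$. Granting this, the corollary should follow almost immediately, and the only real work is checking that the various ``very general'' loci interact correctly.

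More precisely, I would fix very general cubic fourfolds $X, X' \in \cC_{20}$ with $\cA_X \cong \cA_{X'}$. If $X \cong X'$ then any isomorphism is birational and we are done. Otherwise, $X'$ is an FM partner of $X$ not isomorphic to $X$, so by the uniqueness clause in Theorem~\ref{thm:birationalFM-Partners} applied to $X$, one has $X' \cong \sigma_V(X) = F_V(X)$. Composing the birational restriction $F_V|_X \colon X \dashrightarrow F_V(X)$ with this isomorphism yields a birational map $X \dashrightarrow X'$, as desired.

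The only subtlety I anticipate is bookkeeping for the phrase \emph{very general}. The uniqueness statement in Theorem~\ref{thm:birationalFM-Partners} holds outside a countable union of proper closed subsets of $\cC_{20}$, and so does the condition that $F_V$ restrict to a birational map on $X$; the intersection is still very general, so both conclusions may be used simultaneously. One should also confirm that the involutivity of $\sigma_V$ (noted in the excerpt as coming from \cite{CK89}*{Theorem~3.3}) makes the uniqueness symmetric in $X$ and $X'$, so that applying the theorem to $X$ rather than $X'$ is legitimate. Beyond this, no genuine obstacle is expected: the corollary is essentially a formal consequence of Theorem~\ref{thm:birationalFM-Partners}.
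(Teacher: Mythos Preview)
Your proposal is correct and follows exactly the paper's approach: the authors simply note that for very general $X\in\cC_{20}$ the map $F_V$ restricts to a birational map $X\dashrightarrow F_V(X)$, and then say Theorem~\ref{thm:birationalFM-Partners} ``implies immediately'' the corollary. Your write-up just spells out the trivial case $X\cong X'$ and the uniqueness step that the paper leaves implicit.
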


How does the map $\sigma_V$ act on the locus in $\cC_{20}$ which parametrizes rational cubic fourfolds?
Before answering this question, let us briefly review the background:
For a very general $X\in\cC$, the algebraic lattice
\[
    A(X)\colonequals
    H^{2,2}(X,\bC)\cap H^4(X,\bZ)
\]
is spanned by $h^2$, the square of the hyperplane class.
A member $X\in\cC$ is called \emph{special} if $A(X)$ contains a rank~$2$ saturated sublattice
\[
    A(X)\supseteq K\ni h^2
\]
called a \emph{labelling}.
According to Hassett \cite{Has00}, special cubic fourfolds admitting a labelling of discriminant $d$ form an irreducible divisor $\cC_d\subseteq\cC$, which is nonempty if and only if
\begin{equation}
\label{eqn:discriminantNonempty}
    d\geq 8
    \quad\text{and}\quad
    d\equiv 0,2\text{ (mod }6).
\end{equation}
Moreover, for every $X\in\cC_d$, there is an equivalence $\cA_X\cong\bD(S)$ for some K3 surface $S$ if and only if $d$ is \emph{admissible}, namely,
\begin{equation}
\label{eqn:discriminantK3}
    d\text{ is not divisible by }
    4, 9,
    \text{ or any odd prime }
    p\equiv 2\text{ (mod }3).
\end{equation}
For this fact, we refer the reader to \cite{BLMNPS21}*{Corollary~1.7} and the references cited there. A list of such $d\leq 200$ is provided in \cite{Add16}*{Table~1}. Notice that $d=20$ satisfies \eqref{eqn:discriminantNonempty} but not \eqref{eqn:discriminantK3}.

A very general cubic is expected to be irrational, though no such example has been found so far. For rational examples, the four types of cubics mentioned in the beginning constitute four divisors $\cC_{14}$, $\cC_{26}$, $\cC_{38}$, and $\cC_{42}$ in $\cC$. Besides these, there are codimension~$1$ loci in $\cC_8$, see \cite{Has99}, and $\cC_{18}$, see \cite{AHTV19}, known to parametrize rational cubics. Singular cubics, which are characterized by their limiting Hodge structures possessing certain labellings of discriminants $2$ and $6$, see \cite{Has00}*{\S4.2 and \S4.4} (see also \cite{Has16}*{\S2.3}), are also rational.

Since the Cremona map $F_V$ can possibly transform a smooth cubic into a singular one, it is necessary to consider the closure $\overline{\cC_{20}}$ of $\cC_{20}$ in the Laza--Looijenga compactification $\overline{\cC}$, see \cite{Laz10}*{Theorem~1.2}, and extend $\sigma_V$ as a birational involution on $\overline{\cC_{20}}$. For simplicity, we still use $\sigma_V$ to denote this extension.
The loci of singular cubics of discriminants $2$ and $6$ in $\overline{\cC}$ will be denoted by $\cC_2$ and $\cC_6$, respectively.

\begin{thm}
\label{thm:newRationalCubics}
For each $d = 26, 38, 42$, the birational involution $\sigma_V$ maps a component of $\cC_{20}\cap\cC_d$ birationally onto a component of $\overline{\cC_{20}}\cap\cC_{d'}$ where $d'$ cannot be in the list
\[
    \{2,\, 6,\, 8,\, 14,\, 18,\, 26,\, 38,\, 42\}.
\]
As a consequence, there exist at least three irreducible divisors in $\cC_{20}$ consisting of rational cubic fourfolds which were not known before.
\end{thm}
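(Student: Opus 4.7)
The plan is to use the birational involution $\sigma_V$ to transport rational cubics in $\cC_{20}\cap\cC_d$ for $d\in\{26,38,42\}$ onto components of $\overline{\cC_{20}}\cap\cC_{d'}$ for $d'$ outside the list of previously known rational discriminants. By Theorem~\ref{thm:birationalFM-Partners}, for a very general $X\in\cC_{20}$ the Cremona map $F_V$ restricts to a birational equivalence between $X$ and $\sigma_V(X)$, so rationality is preserved by $\sigma_V$. The substance of the theorem is therefore a lattice statement about how $\sigma_V$ acts on labellings.

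The first step is to make $\sigma_V^\ast$ explicit on algebraic cohomology. For a very general $X\in\cC_{20}$ the lattice $A(X)$ has rank two with basis $(h^2,[V])$ and Gram matrix $\begin{pmatrix}3&4\\4&12\end{pmatrix}$ of discriminant $20$. Resolving $F_V$ by blowing up $V\subseteq\bP^5$ allows one to compute the pullback of a hyperplane class, and hence to express $\sigma_V^\ast h^2$ and $\sigma_V^\ast[V]$ as explicit integer combinations of $h^2$ and $[V]$. For a very general $X$ in a component of $\cC_{20}\cap\cC_d$, the lattice $A(X)$ acquires a third generator $D$ such that $h^2$ and $D$ span a rank-two sublattice of discriminant $d$; then the discriminant $d'$ of the rank-two sublattice of $A(\sigma_V(X))$ spanned by the hyperplane class and the class whose $\sigma_V^\ast$-image is $D$ is the discriminant of the new labelling on $\sigma_V(X)$.

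The technical core is a case-by-case computation for each $d\in\{26,38,42\}$. The components of $\cC_{20}\cap\cC_d$ are in bijection with isometry classes of rank-three positive definite lattices containing $h^2$ and carrying labellings of discriminants $20$ and $d$, and there are only finitely many. For each component I would fix a basis $(h^2,[V],D)$, write down the Gram matrix, use the constraint that $\sigma_V^\ast$ is an isometry acting on $\bZ\langle h^2,[V]\rangle$ in the already determined way, and read off $d'$. The main obstacle will be the arithmetic bookkeeping: one must produce, for each of the three input discriminants, at least one component whose output $d'$ avoids every element of $\{2,6,8,14,18,26,38,42\}$, which requires enumerating the finite list of components and checking each excluded value.

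Once such discriminants $d'$ are exhibited, the conclusion is immediate. A very general cubic in the chosen component of $\cC_{20}\cap\cC_d$ is rational by \cites{BRS19,RS19,RS19b}; its image under $\sigma_V$ is rational because $F_V$ induces a birational equivalence between them; and this image sweeps out an irreducible divisor inside $\overline{\cC_{20}}\cap\cC_{d'}$. The condition $d'\notin\{2,6,8,14,18,26,38,42\}$ guarantees that this divisor lies outside every previously recognized locus of rational cubic fourfolds---both the smooth cases $\cC_8,\cC_{14},\cC_{18},\cC_{26},\cC_{38},\cC_{42}$ and the singular cases $\cC_2,\cC_6$---producing at least one new irreducible divisor of rational cubics for each $d\in\{26,38,42\}$, hence at least three in total.
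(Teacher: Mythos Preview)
Your overall strategy matches the paper's: pick explicit components of $\cC_{20}\cap\cC_d$, compute how the Gram matrix of $A(X)$ transforms under $\sigma_V$, and verify that no labelling of the image $X'$ has discriminant in the forbidden list. However, your description of the transformation mechanism contains a genuine error. You write that ``$\sigma_V^\ast$ is an isometry acting on $\bZ\langle h^2,[V]\rangle$ in the already determined way'' and can then be extended to the third generator $D$; but $f_V\colon X\dashrightarrow X'$ is only birational, not regular, so there is no isometry $A(X)\to A(X')$ to compute and then extend. The paper instead passes through the common resolution $Y=\mathrm{Bl}_V X=\mathrm{Bl}_{V'}X'$, where $A(Y)\cong A(X)\oplus\langle\ell\rangle\cong A(X')\oplus\langle\ell'\rangle$ with $\ell^2=\ell'^2=-1$. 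The change of basis on $A(Y)$ (Lemma~\ref{lemma:transRule}) reads $h'^2=4h^2-v-5\ell$, $v'=v$, $\ell'=3h^2-v-4\ell$; the auxiliary class $\ell$ is essential and does not live in $A(X)$. Projecting out $\ell'$ then yields the transformation rule \eqref{eq:CremonaLatticeTransformation} for the full Gram matrix, in which the self-intersection of the third generator shifts by $(3\deg S - v\cdot s)^2$. This is not an isometry of $A(X)$ at all, and the transformed labelling discriminants depend on all three generators simultaneously, not on a prior $2\times 2$ action.

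Two further ingredients your outline omits. First, to apply $\sigma_V$ one needs the chosen $X$ to actually contain a Veronese surface; the paper secures this via Proposition~\ref{prop:containsVeronese} by arranging that the chosen component lies outside $\cC_8$, which is a separate lattice check (Proposition~\ref{prop:existalglattice}). Second, the assertion that $\sigma_V$ maps the component \emph{birationally} onto its image is not automatic from $\sigma_V$ being a birational involution on $\cC_{20}$: the paper argues that fibres are finite because only finitely many cubics can share a given Hodge structure on $\mukaiH(\cA_X,\bZ)$, invoking \cite{Huy17}*{Corollary~3.5}.
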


More details about this theorem are provided in Theorem~\ref{thm:newRationalCubics_details}. The situation about $\cC_{20}\cap\cC_{14}$ is summarized in Remark~\ref{rmk:C20C14}.

In fact, a cubic in $\cC_{20}\cap\cC_{d}$ possesses infinitely many distinct labellings simultaneously for every admissible $d$.  The following theorem shows that, for each admissible $d$, there exists a cubic in $\sigma_V(\cC_{20}\cap\cC_{d})$ with admissible discriminants among which the minimal one is strictly greater than $d$. In particular, the map $\sigma_V$ can potentially produce new rational cubic fourfolds whenever a new divisor $\cC_d$ is found to parametrize rational cubics.

\begin{thm}[= Theorem~\ref{thm:rationalbiggerdisc}]
\label{thm:rationalbiggerdisc_intro}
Let $d\geq 14$ be an even integer which is admissible.
Then $\sigma_V(\cC_{20}\cap\cC_d)$ contains a component $D$ such that
\begin{enumerate}[label=\textup{(\arabic*)}]
\setlength\itemsep{0pt}
    \item $D\not\subseteq\cC_{d'}$ for any admissible $d'$ with $d'\leq d$,
    \item $D\subseteq\cC_{d'}$ for some admissible $d'$ with $d'>d$.
\end{enumerate}
\end{thm}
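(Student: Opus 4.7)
The plan is to pass to the lattice side via the Fourier--Mukai isometry provided by Theorem~\ref{thm:birationalFM-Partners} and analyze the resulting labellings numerically. For a very general $X\in\cC_{20}\cap\cC_d$, the algebraic lattice $A(X)$ has rank three. I would fix a basis $\{h_X^2, V_X, T_X\}$ so that $\{h_X^2,V_X\}$ spans the Veronese labelling with Gram matrix $\begin{pmatrix}3 & 4\\ 4 & 12\end{pmatrix}$ of discriminant $20$ and $\{h_X^2, T_X\}$ realizes a labelling of discriminant $d$; correspondingly the algebraic Mukai lattice $\mukaiH(\cA_X)_{\text{alg}}$ has rank four and contains the distinguished $A_2$-sublattice spanned by the classes $\lambda_1,\lambda_2$.

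The Hodge isometry $\Phi\colon\mukaiH(\cA_X)_{\text{alg}}\xrightarrow{\ \sim\ }\mukaiH(\cA_{X'})_{\text{alg}}$ would then be determined explicitly from the geometric construction of $\sigma_V$, using that $\sigma_V$ is an involution and preserves the Veronese labelling. The crucial output is the image of $h_{X'}^2$ under $\Phi^{-1}$, namely a class $\ell\in\mukaiH(\cA_X)_{\text{alg}}$ which is \emph{not} equal to $h_X^2$ because $\Phi$ acts nontrivially on the $A_2$-sublattice. Labellings of $X'$ that are common to every member of the component $D$ then correspond bijectively to rank-two saturated sublattices of the rank-three sublattice of $\mukaiH(\cA_X)_{\text{alg}}$ associated to $A(X)$, containing $\ell$ rather than $h_X^2$.

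Parametrising these sublattices by a primitive class $\gamma$, their discriminants form an explicit integral quadratic form $Q(\gamma)$. For part~(1), I would enumerate the finitely many $\gamma$ with $Q(\gamma)\leq d$ and check case by case that $Q(\gamma)$ fails the admissibility condition~\eqref{eqn:discriminantK3}. The structural reason this should succeed is that the original discriminant-$d$ labelling $\langle h_X^2, T_X\rangle$ does not contain $\ell$ and therefore does not correspond to a labelling on the $X'$-side, which is what opens the possibility for the minimum admissible discriminant on the $X'$-side to exceed $d$. For part~(2), I would exhibit an explicit $\gamma$ giving an admissible discriminant strictly larger than $d$, appealing to the positive density of admissible integers in the residue class $\{n\equiv 0,2\pmod 6\}$ together with the unboundedness of $Q$.

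The main obstacle will be part~(1): establishing uniformly over all admissible $d\geq 14$ that no small admissible discriminant arises among labellings persisting across $D$. Since the number of admissible candidates below $d$ grows with $d$, a brute enumeration cannot suffice case by case, and the argument must instead rely on a structural property of the quadratic form $Q$ — likely a congruence or divisibility restriction built into the Gram matrix once $\ell$ and the $V_X$-$T_X$ plane are written in coordinates — that forces every value $Q(\gamma)\leq d$ to fall outside admissibility.
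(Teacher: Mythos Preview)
Your strategy for part~(1) is broadly right in spirit but more circuitous than what the paper does. The paper never passes to the Mukai lattice for this step: it works directly with $A(X)$ and $A(X')$ via the explicit transformation law~\eqref{eq:CremonaLatticeTransformation} for Gram matrices induced by the Cremona map. Having chosen a specific component of $\cC_{20}\cap\cC_d$ (the one with $\tau=0$ in Proposition~\ref{prop:existalglattice} when $d\equiv 2\pmod 6$, and $\tau=1$ when $d\equiv 0\pmod 6$), the Gram matrix of $A(X')$ is written down explicitly, and the binary quadratic form $Q$ recording the discriminants of labellings of $X'$ is read off. The structural property you anticipate is then very simple: for these particular components one checks that every value $Q(b,c)\leq d$ forces $c=0$, whence $Q(b,0)=20b^2$ is divisible by $20$ and in particular by $4$, so it is never admissible. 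Your detour through $\mukaiH(\cA_X)$ and the class $\ell=\Phi^{-1}(h_{X'}^2)$ would, if carried out, reproduce the same quadratic form, but it adds a layer of bookkeeping that the direct Gram-matrix computation avoids.

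Part~(2) has a genuine gap. The argument ``positive density of admissible integers together with unboundedness of $Q$'' does not by itself force $Q$ to represent an admissible value: the range of a binary quadratic form is far too sparse (contained in a finite union of residue classes modulo any fixed integer, and of density zero) for a density argument to apply. The paper instead uses a conceptual route: by \cite{AT14}*{Theorem~3.1}, a cubic lies in some admissible $\cC_{d'}$ if and only if $N(\cA)$ contains a hyperbolic plane $U$. Since $d$ is admissible, $N(\cA_X)\supseteq U$; the blowup gives $T(\cA_X)\cong T(\cA_{X'})$, and because $N(\cA_X)\supseteq U$ this isometry extends to a Hodge isometry of full Mukai lattices, whence $N(\cA_{X'})$ also contains $U$. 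Thus $X'\in\cC_{d'}$ for some admissible $d'$, and part~(1) then forces $d'>d$. So the Mukai lattice does enter the proof, but for part~(2) rather than part~(1), and via the hyperbolic-plane criterion rather than by tracking labellings explicitly.
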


This paper is organized as follows: In Section~\ref{sect:veroCubic}, we review the necessary background on cubic fourfolds containing a Veronese surface and establish a few propositions required in proving the main results. These include a birational model for the Veronese locus $\cC_{20}$, and formulas counting the Fourier--Mukai partners of a very general $X\in\cC_d$ with $d$ not divisible by $9$. In Section~\ref{sect:involVeroLocus}, we first introduce the Cremona transformation defined by the Veronese surface, then analyze its induced action on $\cC_{20}$, and finally study the action on the locus of rational cubic fourfolds. Throughout the paper, we say that a member in a moduli space is \emph{very general} provided that it is in the complement of a countably infinite union of divisors.

\subsection*{Acknowledgements}
We thank Brendan Hassett for introducing the fundamental example upon which this work is built. We also thank Asher Auel for proposing the second main question in the introduction. This work also benefitted from the discussions with Emanuele Macr\`{i} and Eyal Markman. Finally, we thank the anonymous referee for their helpful comments.

\section{Cubic fourfolds containing a Veronese surface}
\label{sect:veroCubic}

The main purpose of this section is to establish two results for the use of Section~\ref{sect:involVeroLocus} and briefly review necessary background during the process. The first result is about a birational model for $\cC_{20}$, which ensures that a cubic in it contains a Veronese surface if it does not contain a plane. The second result gives the number of Fourier--Mukai partners of a very general cubic in $\cC_d$ with $d$ not divisible by $9$. As a special case, it will imply that a very general cubic in $\cC_{20}$ has one and only one Fourier--Mukai partner not isomorphic to itself.

\subsection{A birational model for the Veronese locus}
\label{subsect:modelVeroLocus}

Recall that a cubic fourfold $X$ is special if and only if the lattice
$$
    A(X)\colonequals H^{2,2}(X,\bC)\cap H^4(X,\bZ)
$$
contains a labelling. Because the integral Hodge conjecture is proved for cubic fourfolds \cite{Voi07}*{Theorem~18}, this lattice is generated by the classes of algebraic cycles. As a consequence, $X$ is special if and only if it contains an algebraic surface not homologous to a complete intersection.

Special cubic fourfolds containing a Veronese surface determine a divisor $\cC_{20}\subseteq\cC$ which we call the \emph{Veronese locus}. Before making this statement precise, let us briefly review the basic construction of this locus. First of all, one can produce an example of a \emph{smooth} cubic fourfold $X$ containing a Veronese surface in the following way.

Recall that the Veronese surface $V\subseteq\bP^5$ is defined as the embedding of $\bP^2$ into $\bP^5$ via the linear system of conics. Upon taking a projective transformation, we can write this embedding as
$$
    \bP^2\hookrightarrow\bP^5:
    [x:y:z]\mapsto[x^2:xy:y^2:yz:z^2:zx].
$$
If we denote by $[X_0:\dots:X_5]$ the homogeneous coordinates of $\bP^5$, then the $2\times 2$ minors of the matrix
\begin{equation}
\label{eqn:deterVero}
\begin{pmatrix}
X_0 & X_1 & X_5\\
X_1 & X_2 & X_3\\
X_5 & X_3 & X_4
\end{pmatrix}
\end{equation}
form a basis for the ideal $I_V$ of $V\subseteq\bP^5$. Using this explicit description, one can easily produce an example of smooth cubic $X$ containing $V$ with the aid of a computer algebra system (for example, \textsc{Singular} \cite{DGPS}).

Now let $X\subseteq\bP^5$ be a smooth cubic containing the surface $V$. As stated in \cite{Has00}*{\S4.1.4}, the sublattice
$
    K_V\colonequals\langle
        h^2, [V]
    \rangle
    \subseteq A(X)
$
is isometric to
\begin{equation}
\label{eqn:labelling20}
    K_V\cong\begin{pmatrix}
        h^2\cdot h^2 & h^2\cdot[V]\\
        h^2\cdot[V] & [V]\cdot[V]
    \end{pmatrix}
    \cong\begin{pmatrix}
        3 & 4\\
        4 & 12
    \end{pmatrix}
\end{equation}
which has discriminant $20$. Furthermore, $K_V$ is saturated since any nontrivial finite-index overlattice of $K_V$ must have discriminant $5$, which does not satisfy condition \eqref{eqn:discriminantNonempty}, and thus $K_V$ is a labelling of discriminant $20$.

Every automorphism of $\bP^5$ preserving $V$ is extended uniquely from an action of $\PGL_3(\bC)$ on $V\cong\bP^2$. This defines an action of $\PGL_3(\bC)$ on $|I_V(3)|$ and thus on the open subset $U_{20}\subseteq|I_V(3)|$ that parametrizes smooth members. Therefore, we can form the quotient $[U_{20}/\PGL_3(\bC)]$ in the sense of geometric invariant theory \cite{MFK94}. This determines a morphism
\begin{equation}
\label{eqn:quotientToC20}
\xymatrix{
    \varphi\colon [U_{20}/\PGL_3(\bC)]\ar[r] & \cC_{20}.
}
\end{equation}
Every cubic fourfold with a Veronese surface lies in the image of $\varphi$ as all the Veronese surfaces form a single $\PGL_6(\bC)$-orbit. In the following, we prove that a member of $\cC_{20}$ lies in the image of $\varphi$ once we know it is \emph{not} in the divisor $\cC_{8}\subseteq\cC$ that parametrizes cubic fourfolds containing a plane.

\begin{prop}
\label{prop:containsVeronese}
The morphism $\varphi$ is birational and its image contains the open subset $\cC_{20}\setminus\cC_{8}$. In particular, every member of $\cC_{20}\setminus\cC_{8}$ contains a Veronese surface.
\end{prop}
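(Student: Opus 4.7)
The plan is to combine a flat-limit argument in the Hilbert scheme of $\bP^5$ with the classical classification of surfaces of minimal degree. Let $X\in\cC_{20}$ with labelling $K$ of discriminant $20$. By a basis change in $K$ (every discriminant-$20$ labelling is equivalent to the Gram matrix in~\eqref{eqn:labelling20}), choose a generator $v\in K$ with $h^2\cdot v=4$ and $v^2=12$. Since $\cC_{20}$ is defined as the closure of the image of $\varphi$, there is a flat one-parameter family $\{X_t\}$ of cubic fourfolds with $X_0=X$ and $X_t$ containing a smooth Veronese $V_t$ for $t\neq 0$. The properness of the relative Hilbert scheme of $\bP^5$-subschemes with Veronese Hilbert polynomial $P(n)=2n^2+3n+1$ yields a flat limit $T\subseteq X$ of the $V_t$, an effective subscheme with $[T]=v$, so that $\deg T=4$ and $[T]^2=12$.

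Next, decompose $T=\sum n_i T_i$ into reduced irreducible components with multiplicities, and apply the minimum-degree bound (a non-degenerate irreducible surface in $\bP^5$ has degree $\geq 4$) together with $\sum n_i\deg T_i=4$. Three cases arise. \textbf{(i)} If some $T_i$ has degree $1$, it is a plane, so $X\in\cC_8$. \textbf{(ii)} If some $T_i$ has degree $2$, it is linearly degenerate, lying in some $\bP^3\subset\bP^5$; then $X\cap\bP^3$ is a cubic surface containing a quadric, which must split off a plane by degree comparison (an irreducible cubic in $\bP^3$ cannot contain an irreducible quadric surface as a proper subscheme), giving $X\in\cC_8$. \textbf{(iii)} Otherwise $T$ is reduced irreducible of degree $4$, hence non-degenerate. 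By the Bertini--del~Pezzo classification of surfaces of minimal degree in $\bP^5$, $T$ is the Veronese surface, a rational normal scroll $S(1,3)$ or $S(2,2)$, or a cone over a rational normal quartic curve. A Chern-class computation parallel to the one in the excerpt gives $[S(1,3)]^2=[S(2,2)]^2=10$ (and the same for the cone by flat specialization), whereas the Veronese gives $[V]^2=12$. Since $[T]^2=12$, $T$ must be the Veronese. Hence for $X\in\cC_{20}\setminus\cC_8$, cases (i) and (ii) are excluded and $X$ contains a Veronese surface.

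The main obstacle is the production and control of the geometric flat limit $T$ in the first step. One must verify that $T$ is an honest subscheme of $X$ with the asserted numerical invariants, and rule out pathological non-reduced structures supported on a lower-dimensional scheme; the Hilbert-polynomial constraint $P(n)=2n^2+3n+1$ forces the reduction of $T$ to be $2$-dimensional, which is exactly what is needed for the case analysis in the second step to go through cleanly.
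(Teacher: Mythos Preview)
Your approach mirrors the paper's: take a flat limit $T\subseteq X$ of Veronese surfaces in a one-parameter family, show that any plane or quadric component forces $X\in\cC_8$, invoke the del~Pezzo--Bertini classification of minimal-degree surfaces, and then eliminate the non-Veronese candidates by a numerical invariant. The paper uses topological Euler characteristic (a smooth limit of copies of $\bP^2$ is diffeomorphic to $\bP^2$, hence has $\chi=3$, not $4$) for the smooth scrolls and a separate tangent-space argument for the cone; you instead use the self-intersection $[T]^2=12$ versus $10$, which is a clean alternative for the smooth scrolls.

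Two genuine gaps remain. First, the implication ``reduced irreducible of degree $4$, hence nondegenerate'' in your case~(iii) is false as stated: a smooth quartic in $\bP^3\subseteq\bP^5$ is a counterexample. The paper supplies the missing step: if $T$ lay in a hyperplane $H$, it would be a Weil divisor on the irreducible cubic threefold $H\cap X$, hence of degree divisible by~$3$, contradicting $\deg T=4$. Second, your claim that the cone $S(0,4)$ has self-intersection $10$ ``by flat specialization'' is not justified. The self-intersection you need is computed in the \emph{fixed} smooth fourfold $X$, and you have no family of smooth scrolls \emph{inside $X$} degenerating to the cone; since the cone vertex is not a local complete intersection singularity, the double-point formula you used for the smooth scrolls does not apply either. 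The paper handles the cone directly: every ruling passes through the vertex $p$ and is tangent to $X$ there, so if $X$ were smooth at $p$ the cone would lie in the hyperplane $T_pX\cong\bP^4$, contradicting nondegeneracy.
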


\begin{proof}
First we prove that $\varphi$ is birational. Suppose that there exists a cubic fourfold $X$ containing a $1$-dimensional family of Veronese surfaces. Since all Veronese surfaces are projectively equivalent, this implies that $X$ admits a $1$-dimensional stabilizer in $\PGL_6(\bC)$, which cannot happen \cite{MFK94}*{Proposition~4.2}. Therefore, a cubic fourfold contains at most finitely many Veronese surfaces. This shows that $\varphi$ has finite fibers, and so is dominant as its domain and codomain have the same dimension. For a very general member $X\in\cC_{20}$, the lattice $A(X)$ has rank~$2$, which implies that $X$ contains one and only one Veronese surface. This proves that $\varphi$ is birational.

Now we prove that every $X_0\in\cC_{20}\setminus\cC_8$ has a preimage under $\varphi$. Because $\varphi$ is dominant, there exists a deformation of $X_0$ such that a general fiber is a cubic fourfold containing a Veronese surface. More precisely, there exists a family of cubic fourfolds $\mathcal{X}\to D$ over an open disk $\{0\}\in D\subseteq\bC$ and a family of Veronese surfaces $\mathcal{V}\to D\setminus\{0\}$ which form a commutative diagram
\[\xymatrix{
    \mathcal{V}\ar[d]\ar@{^(->}[r]
    & \mathcal{X}\ar[d]
    & X_0\ar@{_(->}[l]\ar[d]\\
    D\setminus\{0\}\ar@{^(->}[r]
    & D
    & \{0\}.\ar@{_(->}[l]
}\]
Let $V_0\subseteq X_0$ denote the specialization of $\mathcal{V}$ over $0\in D$. Our goal is to show that $V_0$ is a Veronese surface.

First we show that $V_0$ is an integral surface. Since $V_0$ has degree~$4$, if it is not integral, then it either involves a plane as a component or consists of two possibly singular quadric surfaces which may coincide or not. The former case is ruled out as $X_0\notin\cC_8$. (A cubic fourfold belongs to $\cC_8$ if and only if it contains a plane.) In the latter case, $X_0$ contains a quadric $Q$. Then $Q$ spans a $3$-space $P$ in $\bP^5$ which intersects $X_0$ in the union of $Q$ with a plane, but this is impossible as $X\notin\cC_8$. This proves that $V_0$ is integral.

Next, we show that $V_0$ is nondegenerate. Assume, to the contrary, that $V_0$ is contained in a hyperplane $H\subseteq\bP^5$. Define $Y\colonequals H\cap X_0$. Then $V_0\subseteq Y$ is a divisor, and by the Lefschetz hyperplane theorem, every divisor of $Y_0$ has degree divisible by $3$. This is a contradiction as $V_0$ has degree~$4$.

According to \cite{SH73} (see also \cite{Har10}*{Exercise~29.6(c)}), every nondegenerate integral surface of degree~$4$ in $\bP^5$ is one of the following:
\begin{enumerate}[label=(\roman*)]
\setlength\itemsep{0pt}
    \item\label{deg4InP5_P1xP1}
    the embedding of $\bP^1\times\bP^1$ via the linear system $|\cO_{\bP^1}(1)\oplus\cO_{\bP^1}(2)|$,
    \item\label{deg4InP5_Hirzebruch2}
    the embedding of the Hirzebruch surface $\bF_2$ via the linear system $|C_0+3f|$, where $C_0$ is the unique sectional class with $C_0^2 = -2$ and $f$ is the fiber class,
    \item\label{deg4InP5_veronese}
    the Veronese surface,
    \item\label{deg4InP5_cone}
    a cone over a rational quartic curve in $\bP^4$.
\end{enumerate}

Surfaces in cases \ref{deg4InP5_P1xP1} and \ref{deg4InP5_Hirzebruch2} have Euler characteristic $\chi = 4$ while the Veronese surface has $\chi = 3$, so these cases can be ruled out. Suppose that case \ref{deg4InP5_cone} occurs, and let $p\in V_0$ be the cone vertex. The tangent hyperplane $T_pX_0$ of $X_0$ at $p$ contains every line in $X_0$ passing through $p$ and thus contains the rulings of $V_0$. But this implies that $V_0$ is contained in $T_pX_0\cong\bP^4$ and so is degenerate, leading to a contradiction. We conclude that only case \ref{deg4InP5_veronese} can happen.
\end{proof}

\begin{rmk}
Here we address what might happen to a Veronese surface $V$ contained in a smooth cubic fourfold $X$ when $X$ deforms to a very general member $X_0\in\cC_{20}\cap\cC_{8}$. Let $V_0\subseteq X_0$ denote the specialization of $V$. By hypothesis, $V_0 = P\cup S$, where $P$ is a plane and $S$ is an integral surface of degree~$3$. If $S$ spans a $\bP^4$, then it is a \emph{variety of minimal degree}, which implies that it is either a type $(1,2)$ rational normal scroll, or a cone over a twisted cubic \cite{EH87}*{Theorem~1}. If $S$ span a $\bP^3$, then it is a possibly singular cubic surface. In order to keep the contents of this paper concise, we leave the questions about what the intersection $P\cap S$ might be, which configuration truly occurs, and which one induces a Cremona transformation open.
\end{rmk}

\subsection{Basic facts about the transcendental lattices}
\label{subsect:transcendentalLattice}

Given a cubic fourfold $X$, its \emph{transcendental lattice} is defined as the orthogonal complement
\[
	T(X)\colonequals A(X)^\perp\subseteq H^4(X, \bZ).
\]
Note that it carries a Hodge structure inherited from $H^4(X,\bZ)$. The purpose of this section is to recall some basic facts and standard results about $T(X)$ which will be used in Section~\ref{subsect:countFM} and later.
In the following, given a lattice $\Lambda$, we will denote by $\Lambda^*\colonequals\mathrm{Hom}(\Lambda,\bZ)$ its dual lattice and by $\disc\Lambda\colonequals\Lambda^*/\Lambda$ its discriminant group.

\begin{lemma}
\label{lemma:isometryT(X)}
Let $X$ be a very general cubic fourfold in $\cC_{d}$. Then the only Hodge isometries on $T(X)$ are $1$ and $-1$.
\end{lemma}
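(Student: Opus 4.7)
The plan is to combine Zarhin's structure theorem on endomorphism fields of Hodge structures of K3 type with a Noether--Lefschetz argument on the period domain.

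First I would observe that, after a Tate twist, the Hodge structure on $T(X)$ is of K3 type: the non-trivial pieces of $H^4(X,\bC)$ are $H^{3,1}$, $H^{2,2}$, $H^{1,3}$ with $h^{3,1}=h^{1,3}=1$, and intersecting with $T(X)\otimes\bC$ kills only a subspace of the $(2,2)$-part, so $T(X)(1)$ is a polarized weight-two Hodge structure with $h^{2,0}=h^{0,2}=1$. Hence by Zarhin's theorem the $\bQ$-algebra
\[
    E \colonequals \mathrm{End}_{\mathrm{Hg}}\bigl(T(X)_\bQ\bigr)
\]
is a field that is either totally real or a CM field.

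The key step is to show that, for $X$ very general in $\cC_d$, one has $E=\bQ$. The periods of cubics in $\cC_d$ trace out (an open subset of) a quotient of a $19$-dimensional Hermitian symmetric period domain $\mathcal{D}_d$ parametrizing Hodge structures of K3 type on the fixed lattice that underlies $T(X)$. Inside $\mathcal{D}_d$, the locus of periods whose endomorphism algebra strictly contains $\bQ$ is a countable union of proper analytic subvarieties (each being a sub-period-domain of Shimura type cut out by an extra Hodge class in $\mathrm{End}(T(X)_\bQ)$). By the global Torelli theorem for cubic fourfolds, this corresponds to a countable union of proper subvarieties of $\cC_d$, and the definition of ``very general'' excludes exactly such loci.

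Once $E=\bQ$ is established, the conclusion is immediate: any Hodge isometry $\varphi$ of $T(X)$ is in particular a Hodge endomorphism, so $\varphi\in E=\bQ$, i.e.\ $\varphi$ is multiplication by some rational scalar $q$. Preserving the integral lattice $T(X)\subseteq T(X)_\bQ$ forces $q\in\bZ$, and preserving the non-degenerate intersection form on $T(X)$ forces $q^2=1$, whence $q=\pm 1$.

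I expect the only genuinely non-formal point to be the identification of the generic endomorphism algebra with $\bQ$; everything else is either Zarhin's theorem applied to the K3-type Hodge structure $T(X)(1)$ or a one-line lattice argument. Under the hood, this is the same ``very general period has maximal Mumford--Tate group'' mechanism that produces $\mathrm{Aut}_{\mathrm{Hg}}(T(S))=\{\pm 1\}$ for a very general K3 surface $S$.
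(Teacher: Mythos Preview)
Your argument is correct in substance: the Noether--Lefschetz step does the real work, and once $E=\bQ$ the lattice conclusion is immediate. One imprecision worth flagging: Zarhin's theorem, as you invoke it, presupposes that $T(X)_\bQ$ is an \emph{irreducible} Hodge structure; having $h^{3,1}=1$ is not by itself enough. You never check this. Fortunately your argument does not actually need Zarhin---the Noether--Lefschetz step already shows that for a very general period no non-scalar rational endomorphism can have the $H^{3,1}$-line as an eigenline, hence $E=\bQ$ directly, and irreducibility becomes a byproduct rather than a hypothesis.

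The paper takes a different route. It reduces to Oguiso's proof of the analogous statement for K3 surfaces of Picard number one and singles out exactly one ingredient that needs adaptation: the \emph{minimality} of $T(X)$, i.e.\ that $T(X)$ is the smallest saturated sublattice whose complexification contains $H^{3,1}$. For K3 surfaces this is the Lefschetz $(1,1)$ theorem; for cubic fourfolds the paper deduces it from Voisin's integral Hodge conjecture, arguing that any would-be complement inside $T(X)$ is of type $(2,2)$, hence algebraic, hence zero. So the paper front-loads the cubic-fourfold-specific input and then imports Oguiso wholesale, whereas you run a self-contained generic-period argument that never names minimality at all. Both are valid; the paper's version is shorter given the citation, yours makes the period-domain mechanism explicit.
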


\begin{proof}
One can use almost the same proof as that of \cite{Ogu02}*{Lemma~(4.1)} to prove this proposition. The only part which requires further verification is the minimality condition: If $T\subseteq T(X)$ is a minimal saturated sublattice such that 
\begin{equation}
\label{eqn:containPeriod}
    H^{3,1}(X,\bC)\subseteq T\otimes\bC
\end{equation}
then $T = T(X)$. Assume, to the contrary, that $T\subsetneq T(X)$ is a minimal saturated sublattice such that \eqref{eqn:containPeriod} holds. Then $T$ has a nonzero orthogonal complement $T^\perp\subseteq T(X)$, and \eqref{eqn:containPeriod} implies that $T^\perp$ is orthogonal to $H^{3,1}(X,\bC)$. It follows that $T^\perp$ is of type $(2,2)$. This implies that $T^\perp$ is algebraic by the integral Hodge conjecture \cite{Voi07}*{Theorem~18}, but this contradicts the fact that $T^\perp\subseteq T(X)$.
Therefore, the minimality condition holds in our case.
\end{proof}

\begin{lemma}
\label{lemma:dTcyclic}
Let $X$ be a very general cubic fourfold in $\cC_{d}$, where $d$ is not divisible $9$.
Then $dT(X)$ is cyclic.
\end{lemma}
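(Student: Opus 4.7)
The plan is to reduce the problem to a short arithmetic check on the labelling of discriminant $d$. For a very general $X\in\cC_d$, the algebraic lattice $A(X)$ has rank two and coincides with the saturated labelling $K\ni h^2$ of discriminant $d$ (this is part of Hassett's general setup, already invoked above). Since the intersection pairing on $H^4(X,\bZ)$ is unimodular by Poincar\'e duality and $A(X)$ is saturated, the standard identification of discriminant groups across an orthogonal decomposition inside a unimodular lattice yields an isomorphism of finite abelian groups
\[
    \disc T(X)\cong\disc A(X).
\]
Therefore it suffices to prove that $\disc K$ is cyclic.

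Next, I would write a Gram matrix of $K$ in a basis beginning with $h^2$:
\[
    M=\begin{pmatrix} 3 & a\\ a & b\end{pmatrix},\qquad 3b-a^2=d.
\]
The Smith normal form then presents $\disc K\cong\bZ^2/M\bZ^2$ as $\bZ/d_1\oplus\bZ/d_2$ with $d_1 d_2=d$ and $d_1=\gcd(3,a,b)$. Consequently $\disc K$ is cyclic if and only if $\gcd(3,a,b)=1$.

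The final step is a one-line modular arithmetic argument. If $3\mid a$, then $a^2\equiv 0\pmod 9$, and the equation $d=3b-a^2$ reduces to $d\equiv 3b\pmod 9$. If additionally $3\mid b$, this forces $9\mid d$, contradicting the hypothesis. Hence $3\mid a$ and $3\mid b$ cannot hold simultaneously, so $\gcd(3,a,b)=1$ and $\disc T(X)$ is cyclic. I do not foresee a genuine obstacle in this strategy; the only points that require minimal care are the saturation of $K$ (true by definition of a labelling) and the reduction $\disc T(X)\cong\disc A(X)$, which depends only on unimodularity of $H^4(X,\bZ)$ and saturation of $A(X)$.
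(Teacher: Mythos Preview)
Your proof is correct and follows the same overall strategy as the paper: both reduce to $\disc A(X)$ via unimodularity of $H^4(X,\bZ)$ and then check cyclicity of the rank-two lattice. The only difference is in the final verification: the paper invokes Hassett's explicit normal forms for $A(X)$, splitting into the cases $d\equiv 2\pmod 6$ and $d\equiv 0\pmod 6$, and leaves ``one can check'' to the reader, whereas you run a uniform Smith-normal-form argument on an arbitrary Gram matrix $\begin{pmatrix}3&a\\a&b\end{pmatrix}$ and extract the condition $9\nmid d$ directly from $\gcd(3,a,b)=1$. Your version is marginally more self-contained and makes transparent why the hypothesis $9\nmid d$ is exactly what is needed, but the two arguments are essentially the same.
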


\begin{proof}
Since $H^4(X,\bZ)$ is unimodular, we have $dT(X)\cong dA(X)$.
If $d \equiv2\ (\mathrm{mod}\ 6)$, then for a very general cubic $X\in\cC_d$,
\[
A(X) \cong
\begin{pmatrix}
    3 & 1 \\
    1 & \frac{d+1}{3}
\end{pmatrix}.
\]
If $d \equiv0\ (\mathrm{mod}\ 6)$ and $d$ is not divisible by $9$, then for a very general cubic $X\in\cC_d$,
\[
A(X) \cong
\begin{pmatrix}
    3 & 0 \\
    0 & \frac{d}{3}
\end{pmatrix}.
\]
One can check that $dA(X)$ is cyclic in both cases.
Therefore, $dT(X)$ is cyclic.
\end{proof}

The transcendental lattice can also be constructed naturally from the K3 categories $\cA_X$. The main reference for the following is \cites{AT14}. Define
\[
    \mukaiH(\cA_X, \bZ)\colonequals K_\textup{top}(\cA_X)
\]
as the topological Grothendieck group,
which has a lattice structure under the Euler pairing
\[
    \chi(E,F)\colonequals
    \sum_{i\in\bZ}(-1)^i\dim\mathrm{Hom}_D(E, F[i]).
\]
As an abstract lattice, we have
\[
    \mukaiH(\cA_X, \bZ)\cong E_8(-1)^{\oplus2}\oplus U^{\oplus 4}
\]
where the right-hand side coincides with the Mukai lattice of a K3 surface.
The Mukai vector induces an injection \cite{AH61}*{\S2.5}
\[\xymatrix{
    \textbf{v}\colon
    \mukaiH(\cA_X, \bZ)\ar@{^(->}[r] & H^*(X, \bQ):
    E\mapsto\operatorname{ch}(E)\sqrt{\operatorname{td(X)}}
}\]
which defines a weight $2$ Hodge structure on $\mukaiH(\cA_X, \bZ)$ by
\begin{align*}
    \mukaiH^{2,0}(\cA_X, \bC)
    &\colonequals \textbf{v}^{-1}H^{3,1}(X, \bC),\\
    \mukaiH^{1,1}(\cA_X, \bC)
    &\colonequals \textbf{v}^{-1}\left(\oplus_{n=0}^4H^{n,n}(X, \bC)\right),\\
    \mukaiH^{0,2}(\cA_X, \bC)
    &\colonequals \textbf{v}^{-1}H^{1,3}(X, \bC).
\end{align*}
As analogues of the N\'eron--Severi lattice and the transcendental lattice of a K3 surface, let us define
\begin{align*}
    N(\cA_X)
    &\colonequals\mukaiH^{1,1}(\cA_X,\bC)\cap\mukaiH(\cA_X,\bZ)\\
    T(\cA_X)
    &\colonequals N(\cA_X)^\perp\subseteq\mukaiH(\cA_X, \bZ).
\end{align*}

The objects $[\cO_\text{line}(1)]$ and $[\cO_\text{line}(2)]$ in $D^b(X)$ induce a sublattice
\[A_2 =
\begin{pmatrix}
    2 & -1\\
    -1 & 2
\end{pmatrix}
\subseteq N(\cA_X).
\]
There may be multiple $A_2$ sublattices in $\mukaiH(\cA_X,\bZ)$, though all of them can be identified via $O(\mukaiH(\cA_X,\bZ))$. We denote the one coming from $[\cO_\text{line}(1)]$ and $[\cO_\text{line}(2)]$ by $A_2(X)$.
By \cite{AT14}*{Proposition~2.3}, restricting $\textbf{v}$ to the orthogonal complement $A_2(X)^\perp\subseteq\mukaiH(\cA_X,\bZ)$ induces a Hodge isometry
\begin{equation}
\label{eqn:a2InPrim}
\xymatrix{
    A_2(X)^{\perp}\ar[r]^-\sim & H^4(X, \bZ)_\mathrm{prim}(-1).
}
\end{equation}
Further restrictions induce Hodge isometries
\begin{equation}
\label{eqn:mukaiAA}
\xymatrix{
    N(\cA_X)\cap A_2(X)^{\perp}\ar[r]^-\sim
    & \left(A(X)\cap H^4(X, \bZ)_\mathrm{prim}\right)(-1)
}
\end{equation}
\begin{equation}
\label{eqn:mukaiTT}
\xymatrix{
        T(\cA_X)\ar[r]^-\sim & T(X)(-1).
}
\end{equation}

\begin{lemma}[\cite{Has00}*{Proposition 3.2.2}]
\label{lemma:neron-60}
Assume that $X\in\cC_{d}$ is very general. Then
\[
    N(\cA_X)\cap A_2(X)^{\perp}
    \cong
    \left(
    A(X)\cap H^4(X, \bZ)_\mathrm{prim}
    \right)(-1)
\]
is a rank $1$ lattice $\left<\ell\right>$.
Moreover,
\[
\ell^2 = 
\begin{cases}
-3d, & \text{if } d \equiv2\ (\mathrm{mod}\ 6), \\
-\frac{d}{3}, & \text{if } d \equiv0\ (\mathrm{mod}\ 6).
\end{cases}
\]
\end{lemma}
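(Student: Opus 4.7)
My plan. The isomorphism $N(\cA_X)\cap A_2(X)^{\perp}\cong (A(X)\cap H^4(X,\bZ)_{\mathrm{prim}})(-1)$ is already recorded as the Hodge isometry \eqref{eqn:mukaiAA}, so for that part nothing new is required; the work lies in verifying that this lattice is rank one and in computing $\ell^2$. I would carry out the computation entirely on the right-hand side of \eqref{eqn:mukaiAA}, using the explicit Gram matrices for $A(X)$ recalled in the proof of Lemma~\ref{lemma:dTcyclic} (which come from Hassett's classification of saturated labellings).

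Concretely, for a very general $X\in\cC_d$ the lattice $A(X)$ is the saturated labelling $K$ of discriminant $d$, and in a basis $\{h^2,S\}$ one has $h^2\cdot S=1$ and $S^2=(d+1)/3$ when $d\equiv 2\ (\mathrm{mod}\ 6)$, and $h^2\cdot S=0$ and $S^2=d/3$ when $d\equiv 0\ (\mathrm{mod}\ 6)$. Since $(h^2)^2=3\neq 0$, the orthogonal complement of $h^2$ inside the rank two lattice $A(X)$ is a rank one sublattice, giving the first assertion of the lemma.

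For $\ell^2$ I would then extract a primitive generator of $\langle h^2\rangle^{\perp}\cap A(X)$ by solving $3x+(h^2\cdot S)y=0$ over $\bZ$ with $(x,y)$ primitive. In the case $d\equiv 2\ (\mathrm{mod}\ 6)$ this yields $\ell=-h^2+3S$, hence $\ell^2=3-6+3(d+1)=3d$; in the case $d\equiv 0\ (\mathrm{mod}\ 6)$ it yields $\ell=S$, hence $\ell^2=d/3$. Finally, the $(-1)$ twist from \eqref{eqn:mukaiAA} flips these signs and reproduces the two values stated in the lemma.

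The only place a subtlety could arise is in justifying the two Gram matrices for every admissible $d$ — in particular when $d$ is divisible by $9$, which was excluded in Lemma~\ref{lemma:dTcyclic} but not here — since a priori the saturation of the labelling might take a different form. This is exactly the content of the cited Proposition~3.2.2 in [Has00], where Hassett enumerates the possible saturated rank two labellings of each discriminant and shows that, up to isometry fixing $h^2$, the two matrices above exhaust what can appear. Once that input is taken as given, the remainder of the proof is a one-line matrix computation in each residue class.
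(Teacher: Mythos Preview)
Your proposal is correct and follows exactly the route the paper indicates: the lemma is attributed to \cite{Has00}*{Proposition~3.2.2}, and the paper's own (suppressed) justification is precisely the ``straightforward verification'' from the Gram matrices for $A(X)$ that you carry out. Your observation about the $9\mid d$ case is apt but harmless here, since Hassett's classification of rank two labellings gives the same two Gram matrices regardless of that divisibility, and only the cyclicity of the discriminant group (not needed for this lemma) is affected.
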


\subsection{Counting the Fourier--Mukai partners}
\label{subsect:countFM}

This section aims to compute the number of Fourier--Mukai partners for a very general $X\in\cC_{d}$ when $d$ is not divisible by $9$. Following \cite{Huy17}, we say that two cubic fourfolds $X$ and $Y$ are \emph{Fourier--Mukai partners} if there exists an equivalence $\cA_X\xrightarrow{\sim}\cA_Y$ which is of Fourier--Mukai type, i.e.~such that the composition
\[\xymatrix{
    D^b(X)\ar[r] & \cA_X\ar[r]^-\sim & \cA_Y \ar@{^(->}[r] & D^b(Y)
}\]
is a Fourier--Mukai transform.
If $X\in\cC_d$ is very general, then this is equivalent to the existence of a Hodge isometry \cite{Huy17}*{Theorem~1.5~(iii)}
\[\xymatrix{
    F\colon\mukaiH(\cA_X,\bZ)\ar[r]^-\sim & \mukaiH(\cA_Y,\bZ).
}\]
By restricting to the transcendental parts, we obtain a commutative diagram
\begin{equation}
\label{eqn:mukaiToTrans}
\begin{aligned}
\xymatrix{
    \mukaiH(\cA_X,\bZ)\ar[r]^{F}_-{\sim}
    & \mukaiH(\cA_Y,\bZ)\\
    T(\cA_X)\ar@{^(->}[u]\ar[r]^-{\sim}
    & T(\cA_Y).\ar@{^(->}[u].
}
\end{aligned}
\end{equation}

Now suppose that $X$ is a very general member of $\cC_{d}$ and $d$ is not divisible by $9$. Then $Y$ also belongs to $\cC_{d}$ and is very general since $T(X)\cong T(Y)$ by \eqref{eqn:mukaiTT}.
Note that the isometry on the bottom of \eqref{eqn:mukaiToTrans} extends uniquely to an isometry on the top by \cite{Nik79}*{Theorem~1.14.4} and the fact that $\disc T(X)$ is cyclic (Lemma~\ref{lemma:dTcyclic}). Therefore, $X$ and $Y$ are Fourier--Mukai partners if and only if $T(\cA_X)$ and $T(\cA_Y)$ are isomorphic as Hodge lattices.

The number of Fourier--Mukai partners of a very general cubic fourfold $X\in\cC_d$ for admissible $d$ has been computed by Pertusi \cite{Per21}*{Theorem~1.1}. In order to treat the case $d=20$, we generalize it to the following.

\begin{prop}
\label{prop:FMPartner}
Let $X$ be a very general cubic fourfold in $\cC_{d}$, where $d$ satisfies (\ref{eqn:discriminantNonempty}) and is not divisible by $9$.
Define a number $m\in\bN$ depending on $d$:
\begin{itemize}
    \item $m=1$ if $d=2^a$;
    \item $m=2^{k-1}$ if $d=2p_1^{e_1}\cdots p_k^{e_k}$;
    \item $m=2^{k}$ if $d=2^ap_1^{e_1}\cdots p_k^{e_k}$.
\end{itemize}
Here $a\geq2$, and the $p_i$ are distinct odd primes.
Then:
\begin{enumerate}[label=\textup{(\arabic*)}]
    \item If $d \equiv2\ (\mathrm{mod}\ 6)$, then the number of Fourier--Mukai partners of $X$ equals $m$.
    \item If $d \equiv0\ (\mathrm{mod}\ 6)$ and not divisible by $9$, then the number of Fourier--Mukai partners of $X$ equals $\frac{1}{2}m$.
\end{enumerate}
In particular, if $X$ is a very general cubic in $\cC_{20}$, then the number of Fourier--Mukai partners of $X$ equals $2$.
\end{prop}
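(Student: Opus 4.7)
The plan is to follow the strategy that Pertusi \cite{Per16}*{Theorem~1.1} applied to admissible discriminants, namely to translate the enumeration of Fourier--Mukai partners into a lattice-theoretic double-coset computation in the spirit of Hosono--Lian--Oguiso--Yau, and then to evaluate that double coset explicitly under the weaker hypothesis $9\nmid d$.

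\textbf{Step 1 (Reduction to lattices).} Set $T\colonequals T(\cA_X)$ and $N\colonequals N(\cA_X)$. Combining the diagram \eqref{eqn:mukaiToTrans} with Lemma~\ref{lemma:dTcyclic} and Nikulin's extension theorem \cite{Nik79}*{Theorem~1.14.4}, two very general $X,Y\in\cC_d$ are Fourier--Mukai partners if and only if $T(\cA_X)$ and $T(\cA_Y)$ are Hodge-isometric. The standard Nikulin gluing analysis (cf.\ \cite{Per16}*{\S3}) then identifies the set of Fourier--Mukai partners of $X$ (up to isomorphism) with
\[
    \mathrm{Im}\,O(N)\,\big\backslash\,O(\disc T)\,\big/\,\mathrm{Im}\,O(T),
\]
where both images lie in $O(\disc T)=O(\disc N)$ via the unimodularity of $\mukaiH(\cA_X,\bZ)$. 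Because $\disc T$ is cyclic, $O(\disc T)$ is abelian, and the double coset simplifies to the quotient $O(\disc T)/(\mathrm{Im}\,O(T)\cdot\mathrm{Im}\,O(N))$; by Lemma~\ref{lemma:isometryT(X)} we have $\mathrm{Im}\,O(T)=\{\pm 1\}$.

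\textbf{Step 2 (Counting).} From Lemma~\ref{lemma:neron-60} together with $N\supseteq A_2\oplus\langle\ell\rangle$, I would read off $|\disc T|=|\disc N|$ in terms of $d$, and then identify $O(\disc T)$ with the group of square roots of $1$ in $(\bZ/|\disc T|)^\times$. A direct calculation based only on the $2$-adic valuation of $d$ and on the number of its distinct odd prime factors shows that $|O(\disc T)|$ is precisely the number $m$ displayed in the statement. The remaining task is to determine $\mathrm{Im}\,O(N)$: in the case $d\equiv 2\pmod 6$, the isometries of $N$ contribute only $\{\pm 1\}$ to the discriminant, giving $m$ partners, whereas in the case $d\equiv 0\pmod 6$ with $9\nmid d$ the orthogonal form $A(X)\cong\mathrm{diag}(3,d/3)$ furnishes an extra involution that enlarges $\mathrm{Im}\,O(N)$ by an additional factor of $2$, producing $m/2$ partners. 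For $X\in\cC_{20}$ we land in the first regime with $20=2^2\cdot 5$ and $k=1$, so $m=2^1=2$, matching the final assertion.

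The main obstacle is the last piece of Step 2: pinning down $\mathrm{Im}\,O(N)$. This requires an explicit description of the saturation of $A_2\oplus\langle\ell\rangle$ inside $\mukaiH(\cA_X,\bZ)$, together with a careful analysis of how its integral isometries act on the cyclic discriminant group. The dichotomy between the two cases in the statement is expected to emerge from the parity of $\ell^2$ prescribed by Lemma~\ref{lemma:neron-60}, since that parity controls whether or not the $A_2$-swap and the sign change on $\langle\ell\rangle$ generate the same subgroup of $O(\disc T)$.
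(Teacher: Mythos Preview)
Your double-coset formula in Step~1 is the wrong object. By the Torelli theorem, the isomorphism class of a cubic is recovered from the Hodge lattice $H^4(X,\bZ)_{\mathrm{prim}}\cong A_2(X)^\perp$ (see \eqref{eqn:a2InPrim}), so the Fourier--Mukai count is governed by primitive embeddings of $T$ into the \emph{non-unimodular} lattice $A_2^\perp$; the orthogonal complement of $T$ there is the rank-one lattice $S=N(\cA_X)\cap A_2(X)^\perp=\langle\ell\rangle$, not the rank-three $N=N(\cA_X)$. If instead one glues $T$ and $N$ inside the unimodular $\mukaiH(\cA_X,\bZ)$ as you propose, then since $N$ is indefinite of rank~$3$ with cyclic discriminant, \cite{Nik79}*{Theorem~1.14.2} forces $O(N)\to O(\disc N)$ to be surjective. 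Hence $\mathrm{Im}\,O(N)=O(\disc T)$ and your double coset collapses to a single element regardless of $d$; in particular it cannot see the two partners at $d=20$, and the ``extra involution'' you anticipate in the $d\equiv 0\pmod 6$ case is already absorbed. The paper avoids this by working with $S$ rather than $N$: it enumerates directly the set $\cM_{S,T}$ of even overlattices $L\supseteq S\oplus T$ of discriminant~$3$, shows that each such $L$ arises as $A_2(Y)^\perp$ for a unique cubic $Y$ via Torelli and surjectivity of the period map (Lemma~\ref{lemma:equalityTFM_MST}), and then computes $|\cM_{S,T}|$ by an explicit congruence in $(\bZ/d\bZ)^*$ (Lemma~\ref{lemma:MST=4}).

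Your numerical assertion in Step~2 is also incorrect: you claim $|O(\disc T)|=m$, but $\pm 1$ always lie in $O(\disc T)$, so $|O(\disc T)|\geq 2$ whenever $d>2$. Already $d=14$ gives $m=1$, and a direct check of the discriminant form shows $|O(\disc T)|=4$ when $d=20$ rather than $m=2$. These values do rhyme with the paper's $|\cM_{S,T}|=2m$, but that coincidence is invisible from your setup and does not survive the collapse of the double coset noted above.
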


\begin{proof}
We follow closely the idea in \cite{Ogu02} which counts the numbers of Fourier--Mukai partners of K3 surfaces with Picard rank $1$.
Fix a very general $X\in\cC_{d}$. We define
\[
    T\colonequals T(\cA_X)
    \quad\text{and}\quad
    S\colonequals N(\cA_X)\cap A_2(X)^\perp\cong \langle \ell\rangle
\]
where $\ell^2 = -3d$ when $d \equiv2\ (\mathrm{mod}\ 6)$ and $\ell^2 = -\frac{1}{3}d$ in the other case. We further define $\cM_{S,T}$ to be the collection of even overlattices $L\supseteq S\oplus T$ which satisfy
\begin{itemize}
\setlength\itemsep{0pt}
    \item $S\oplus T\subseteq L\subseteq S^*\oplus T^*$;
    \item $S$ and $T$ are both saturated in $L$;
    \item $L$ has discriminant $3$; that is, $[L^*:L]=3$.
\end{itemize}
Note that each $L\in\cM_{S,T}$ is equipped with the weight $2$ Hodge structure induced from $T$. (For the definition of \emph{even overlattices}, see \cite{Nik79}*{\S1.4}.)

Let $\mathrm{FM}(X)$ denote the set of Fourier--Mukai partners of $X$. Our goal is to prove that
\[
    |\mathrm{FM}(X)|
    = \begin{cases}
    m, & \text{if } d \equiv2\ (\mathrm{mod}\ 6),\\
    \frac{m}{2}, & \text{if } d \equiv0\ (\mathrm{mod}\ 6) \text{ and }9\nmid d.
    \end{cases}
\]
We accomplish this via the relation between $\mathrm{FM}(X)$ and $\cM_{S,T}$ as described below:
Let $Y\in\cC_{d}$ be very general. By Lemma~\ref{lemma:neron-60}, there exist exactly two choices of isometries
\begin{equation}
\label{eqn:SS_Y}
\xymatrix{
    \phi\colon S\ar[r]^-\sim &
    N(\cA_Y)\cap A_2(Y)^\perp =\vcentcolon S_Y
}
\end{equation}
such that one is the negative of the other. Assume further that $Y\in\mathrm{FM}(X)$. Then there exists an isometry
\begin{equation}
\label{eqn:TT_Y}
\xymatrix{
    \psi\colon T\ar[r]^-\sim &
    T(\cA_Y) =\vcentcolon T_Y
}
\end{equation}
respecting the Hodge structures. By Lemma~\ref{lemma:isometryT(X)}, there are exactly two such isometries, where one is the negative of the other.
These induce an isometry on the dual spaces
\[\xymatrix{
    (\phi^*\oplus \psi^*): S_Y^*\oplus T_Y^* \ar[r]^-\sim &
    S^*\oplus T^*.
}\]
Note that $A_2(Y)^\perp\in\cM_{S_Y,T_Y}$.
Define
\begin{equation}
\label{eqn:overlattice}
    L_{Y,\phi,\psi}\colonequals (\phi^*\oplus \psi^*)(A_2(Y)^\perp).
\end{equation}
Then $ L_{Y,\phi,\psi}\in\cM_{S,T}$. Also note that the restriction
\[\xymatrix{
    (\phi^*\oplus \psi^*)|_{A_2(Y)^\perp}\colon
    A_2(Y)^\perp\ar[r]^-\sim &
    L_{Y,\phi,\psi}
}\]
is a Hodge isometry.
Let us define
\[
    \widetilde{\mathrm{FM}}(X)\colonequals
    \{
        (Y,\phi,\psi)
        \mid
        Y\in\mathrm{FM}(X),\,
        \phi\colon S\xrightarrow{\sim}S_Y,\,
        \psi\colon T\xrightarrow{\sim}T_Y
    \}
\]
where $\phi$ and $\psi$ are as in \eqref{eqn:SS_Y} and \eqref{eqn:TT_Y}, respectively. Then the above construction gives a diagram
\[\xymatrix{
    \widetilde{\mathrm{FM}}(X)\ar[d]_-{\Pi}\ar[r]^-{L_\bullet} & \cM_{S,T}\\
    \mathrm{FM}(X)
}\]
where $\Pi(Y,\phi,\psi) = Y$ and the map $L_\bullet$ works as in \eqref{eqn:overlattice}.
Note that the preimage of each $Y\in\mathrm{FM}(X)$ under $\pi$ has the form
\[
    \Pi^{-1}(Y) = \{
    (Y,\phi,\psi),\,
    (Y,-\phi,\psi),\,
    (Y,\phi,-\psi),\,
    (Y,-\phi,-\psi)
    \}.
\]
In particular, we have $|\widetilde{\mathrm{FM}}(X)| = 4|\mathrm{FM}(X)|$.
In Lemmas~\ref{lemma:equalityTFM_MST} and \ref{lemma:MST=4}, we will prove, respectively, that
\[
    |\widetilde{\mathrm{FM}}(X)| = 2|\cM_{S,T}|
\]
and that
\[
    |\cM_{S,T}| =
    \begin{cases}
    2m, & \text{if } d \equiv2\ (\mathrm{mod}\ 6),\\
    m, & \text{if } d \equiv0\ (\mathrm{mod}\ 6) \text{ and }9\nmid d.
    \end{cases}
\]
These imply that
\[
    |\mathrm{FM}(X)|
    = \frac{1}{4}|\widetilde{\mathrm{FM}}(X)|
    = \frac{1}{2}|\cM_{S,T}|
    = \begin{cases}
    m, & \text{if } d \equiv2\ (\mathrm{mod}\ 6),\\
    \frac{m}{2}, & \text{if } d \equiv0\ (\mathrm{mod}\ 6) \text{ and }9\nmid d.
    \end{cases}
\]
\end{proof}

\begin{lemma}
\label{lemma:equalityTFM_MST}
Let us retain the condition of Proposition~\ref{prop:FMPartner} and the notation in its proof.
Then we have
$
    |\widetilde{\mathrm{FM}}(X)| = 2|\cM_{S,T}|.
$
\end{lemma}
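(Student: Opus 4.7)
My plan is to prove $|\widetilde{\mathrm{FM}}(X)|=2|\cM_{S,T}|$ by showing that $L_\bullet\colon\widetilde{\mathrm{FM}}(X)\to\cM_{S,T}$ is surjective with every fiber of cardinality exactly two.

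\textbf{Surjectivity.} Given $L\in\cM_{S,T}$, the saturated embedding $T\hookrightarrow L$ endows $L$ with a weight-two Hodge structure of signature $(2,20)$ satisfying $h^{2,0}=1$. By Nikulin's classification of indefinite even lattices via discriminant forms, $L$ is isometric to the abstract lattice $A_2^\perp$ sitting inside the Mukai lattice $\Lambda\cong E_8(-1)^{\oplus 2}\oplus U^{\oplus 4}$. Transporting the Hodge structure through this isometry and declaring the fixed $A_2\subseteq\Lambda$ to be purely algebraic defines a Hodge structure on $\Lambda$ which, via \eqref{eqn:a2InPrim}, is a period point for a cubic fourfold. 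Surjectivity of the period map (Laza, Looijenga) together with global Torelli (Voisin) then yields a smooth cubic fourfold $Y$, and the criterion $T(\cA_X)\cong T(\cA_Y)$ recorded in the proof of Proposition~\ref{prop:FMPartner} places $Y\in\mathrm{FM}(X)$. The inclusions $S,T\hookrightarrow L\cong A_2(Y)^\perp$ supply the required $\phi$ and $\psi$.

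\textbf{Fiber count.} Suppose $L_{Y,\phi,\psi}=L=L_{Y',\phi',\psi'}$. The Hodge structure on $L$ determines the primitive cohomology of a cubic fourfold up to isomorphism, so global Torelli forces $Y\cong Y'$. After fixing this identification, $\phi$ and $\phi'$ differ by a sign because $S$ and $S_Y$ are rank one, and $\psi$ and $\psi'$ differ by a sign by Lemma~\ref{lemma:isometryT(X)}. Among the four resulting sign combinations, $(\phi,\psi)$ and $(-\phi,-\psi)$ produce the overlattice $L$, while $(\phi,-\psi)$ and $(-\phi,\psi)$ produce $\tau(L)$, where $\tau\colonequals(1,-1)$ denotes the sign flip on the second factor of $S\oplus T$. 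The fiber therefore has exactly two elements provided $\tau(L)\neq L$.

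\textbf{The main obstacle.} The heart of the proof is the freeness claim $\tau(L)\neq L$ for every $L\in\cM_{S,T}$. Under the correspondence between overlattices of $S\oplus T$ and isotropic subgroups of $\disc S\oplus\disc T$, the lattice $L$ corresponds to an isotropic subgroup $H$ of order $d$ or $d/3$ (according to whether $d\equiv 2$ or $0\pmod 6$), and $\tau(L)=L$ translates to $H$ being closed under $(a,b)\mapsto(a,-b)$. Combined with the saturation of $S$ and $T$ in $L$, which forbids any nonzero element of $H$ lying in $\disc S\oplus\{0\}$ or $\{0\}\oplus\disc T$, this would force every $(a,b)\in H$ to satisfy $2a=0=2b$, so $H\subseteq(\disc S)[2]\oplus(\disc T)[2]$, a group of order at most $4$ since both $\disc S$ and $\disc T$ are cyclic (the latter by Lemma~\ref{lemma:dTcyclic}). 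A size comparison with the required order of $H$, using $d\geq 8$ and invoking the saturation constraints to eliminate the borderline case $d=12$, yields the contradiction. This final numerical bookkeeping is where I expect the main work to lie.
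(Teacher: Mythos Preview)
Your overall plan matches the paper's: show $L_\bullet$ is surjective with every fiber of size two. Two points.

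\textbf{A gap in surjectivity.} You invoke Laza--Looijenga to produce a smooth cubic $Y$, but their theorem only says the period map surjects onto the \emph{complement} of the two Heegner divisors associated with discriminants $2$ and $6$. You must check that your period point lands there. The paper does this explicitly: after identifying $L(-1)\cong\langle h^2\rangle^\perp\subseteq I_{21,2}$, it computes the rank-two lattice $T(-1)^\perp$ and verifies directly that it carries no labelling of discriminant $2$ or $6$. This is a routine computation but cannot be omitted. (A second detail you elide: one must confirm that the image of $T$ is exactly $T(\cA_Y)$ rather than a proper overlattice; the paper handles this via the irreducibility of the Hodge structure on $T$, as in the proof of Lemma~\ref{lemma:isometryT(X)}.)

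\textbf{The freeness step $\tau(L)\neq L$.} Here your argument genuinely diverges from the paper's. The paper chooses an explicit generator for the cyclic group $L/(S\oplus T)$, writing it (in the case $d\equiv 2\pmod 6$) as $\langle\frac{\ell+3bt}{d}\rangle$ with $\gcd(b,d)=1$, and observes that $\tau(L)=L$ would force $6b\equiv 0\pmod d$, hence $d\mid 6$, contradicting $d\geq 8$. Your $2$-torsion bound is more conceptual and treats both congruence classes of $d$ at once, at the price of the separate $d=12$ check. Both approaches ultimately rest on $\disc T$ being cyclic (Lemma~\ref{lemma:dTcyclic}). One practical advantage of the paper's explicit parametrization is that it is reused verbatim in the proof of Lemma~\ref{lemma:MST=4} to count $|\cM_{S,T}|$; your approach would require setting that machinery up separately.
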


\begin{proof}
We will mostly assume that $d \equiv2\ (\mathrm{mod}\ 6)$, and will mention the changes needed for the case $d \equiv0\ (\mathrm{mod}\ 6)$ in Remark~\ref{rmk:case_d=0(mod6)}.
Suppose that $Y$ and $Y'$ are Fourier--Mukai partners of $X$ such that $L_{Y,\phi,\psi} = L_{Y',\phi',\psi'}$. Then $Y$ and $Y'$ are isomorphic by the Torelli theorem \cite{Voi86} and \eqref{eqn:a2InPrim}.
This shows that the map $\Pi$ factors as
\[\xymatrix{
    \widetilde{\mathrm{FM}}(X)\ar[d]_-{\Pi}\ar[r]^-{L_\bullet} & \cM_{S,T}\ar[dl]^-{\Pi'}\\
    \mathrm{FM}(X).
}\]
In particular, $L_\bullet$ maps distinct fibers of $\Pi$ to disjoint subsets of $\cM_{S,T}$. If we can show that $L_{Y,\phi,\psi} = L_{Y,-\phi,-\psi}$ and $L_{Y,\phi,\psi}\neq L_{Y,\phi,-\psi}$, then the image of each fiber $\Pi^{-1}(Y)$ under $L_\bullet$ would consist of two elements, so $L_\bullet$ is $2$-to-$1$. This would imply that
\begin{equation}
\label{eqn:inequalityTFM_MST}
    |\widetilde{\mathrm{FM}}(X)| \leq 2|\cM_{S,T}|
\end{equation}
Notice that the equality on the left is trivial since
\[
    L_{Y,-\phi,-\psi} = -L_{Y,\phi,\psi} = L_{Y,\phi,\psi}\subseteq S^*\oplus T^*.
\]

Now we prove the inequality $L_{Y,\phi,\psi}\neq L_{Y,\phi,-\psi}$. Let $L$ be any element in $\cM_{S,T}$. By definition, we have $[L^*:L]=3$ and
\[
    S\oplus T\subseteq L\subseteq L^*\subseteq S^*\oplus T^*.
\]
Using the facts that $[S^*:S]=3d$ and $[T^*:T]=d$, we obtain
\[
    [L:S\oplus T] = [S^*\oplus T^*:L^*]=d.
\]
Since $S\subseteq L$ is saturated, the natural map
\[\xymatrix{
    L^*/(S\oplus T)\ar[r] & S^*/S\cong\bZ/(3d)\bZ
}\]
is a surjection, therefore an isomorphism as $[L^*:S\oplus T]=3d$.
This implies that $L^*/(S\oplus T)$ is cyclic of order $3d$.
Since $T\subseteq L$ is saturated, the map
\[\xymatrix{
    L^*/(S\oplus T)\ar[r] & T^*/T\cong\bZ/d\bZ
}\]
is surjective as well. Write
\[
    S^*/S = \left<\frac{\ell}{3d}\right>
    \quad\text{and}\quad
    T^*/T=\left<\frac{t}{d}\right>
\]
for some $t\in T$. Then there exists an integer $b$ with $\gcd(b, d)=1$ such that
\[
    L^*/(S\oplus T)
    =\left<\frac{\ell}{3d} + \frac{bt}{d}\right>
\]
Thus we can write
\begin{equation}
\label{eqn:generatorForLoverST}
    L/(S\oplus T)
    =\left<\frac{\ell+3bt}{d}\right>.
\end{equation}
Now express $L_{Y,\phi,\psi}/(S\oplus T)$ in the form \eqref{eqn:generatorForLoverST}. Then we have
\[
    L_{Y,\phi,-\psi}/(S\oplus T)
    =\left<\frac{\ell-3bt}{d}\right>.
\]
It follows that $L_{Y,\phi,\psi}=L_{Y,\phi,-\psi}$ if and only if $6b\equiv0\ (\mathrm{mod}\ d)$. This is impossible since $\gcd{(b, 20)}=1$ and $d\equiv2\ (\mathrm{mod}\ 6)$, so we conclude that $L_{Y,\phi,\psi}\neq L_{Y,\phi,-\psi}$.
This finishes the proof of the inequality \eqref{eqn:inequalityTFM_MST}.

To prove the desired equality, it suffices to show that the map
\[\xymatrix{
    L_\bullet\colon\widetilde{\mathrm{FM}}(X)\ar[r] & \cM_{S,T}
}\]
is surjective. Let $I_{21,2}\colonequals\langle1\rangle^{\oplus 21}\oplus\langle-1\rangle^{\oplus 2}$ be the abstract lattice isometric to the middle cohomology of a cubic fourfold and let $h^2\in I_{21,2}$ be a class with $(h^2,h^2)=3$.
By \cite{Nik79}*{Corollary 1.13.3}, the sublattice
$
    \left< h^2 \right>^\perp \subseteq I_{21,2}
$
is the unique even lattice with signature $(20,2)$ and discriminant $3$ up to lattice isomorphism.
Hence, for any element $L\in\cM_{S,T}$, we have
\[
    L(-1) \cong \left< h^2 \right>^\perp\subseteq I_{21,2}
\]
as abstract lattices. Now consider $T(-1)$ as a sublattice of $I_{21,2}$ using the above isomorphism. Then its orthogonal complement
$
    T(-1)^\perp\subseteq I_{21,2}
$
is a rank $2$ lattice of discriminant $d$ and contains $h^2$. 
If $d \equiv2\ (\mathrm{mod}\ 6)$, then
\[
T(-1)^\perp\cong
\begin{pmatrix}
    3 & 1 \\
    1 & \frac{d+1}{3}
\end{pmatrix}.
\]
If $d \equiv0\ (\mathrm{mod}\ 6)$, then
\[
T(-1)^\perp\cong
\begin{pmatrix}
    3 & 0 \\
    0 & \frac{d}{3}
\end{pmatrix}.
\]
One can check by direct computations that such a $T(-1)^\perp$ does not admit labellings with discriminant $2$ or $6$.
By \cite{Laz10}*{Theorem~1.1}, there exists a cubic fourfold $Y$ with a Hodge isometry
\[\xymatrix{
    \eta\colon
    L(-1)
    \ar[r]^-\sim &
    H^4(Y,\bZ)_\mathrm{prim}
}\]
which maps $(T(-1)_\bC)^{2,0}$ to $H^{3,1}(Y,\bC)$.
By the proof of Lemma~\ref{lemma:isometryT(X)}, the lattice $T(-1)$ (respectively, $T(Y)$) does not contain a proper saturated Hodge sublattice that contains $(T(-1)_\bC)^{2,0}$ (respectively, $H^{3,1}(Y,\bC)$).
Therefore, we have
$
    \eta(T(-1))=T(Y).
$
Hence $Y$ is a Fourier--Mukai partner of $X$, and the restrictions of $\eta$ to $S(-1)$ and $T(-1)$ induce a triple $(Y,\phi,\psi)\in\widetilde{\mathrm{FM}}(X)$ such that
$
    L_\bullet(Y,\phi,\psi)=L.
$
This proves the surjectivity of $L_\bullet$.
\end{proof}

\begin{lemma}
\label{lemma:MST=4}
Let us retain the condition of Proposition~\ref{prop:FMPartner} and the notation in its proof. Then we have
\[
    |\cM_{S,T}| =
    \begin{cases}
    2m, & \text{if } d \equiv2\ (\mathrm{mod}\ 6),\\
    m, & \text{if } d \equiv0\ (\mathrm{mod}\ 6) \text{ and not divisible by }9.
    \end{cases}
\]
\end{lemma}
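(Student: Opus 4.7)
The plan is to parameterize $\cM_{S,T}$ explicitly by a single unit modulo a divisor of $d$, translate the evenness requirement into a quadratic congruence, and then count solutions by the Chinese remainder theorem.

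Continuing the analysis begun in the proof of Lemma~\ref{lemma:equalityTFM_MST}, every $L\in\cM_{S,T}$ is determined by the cyclic subgroup $H=L/(S\oplus T)\subset \disc S\oplus\disc T$, whose order is forced to be $d$ (resp.\ $d/3$) by the discriminant condition $\disc L=3$. The saturation of $S$ and $T$ in $L$ means that $H$ projects injectively into each summand; combined with the cyclicity of $\disc S$ (trivial as $S$ has rank one) and of $\disc T$ (Lemma~\ref{lemma:dTcyclic}), this identifies $H$ with the graph of an isomorphism between the unique subgroups of order $|H|$ in $\disc S$ and in $\disc T$. Such graphs are indexed by a single residue $b\in(\bZ/N)^*$, with $N=d$ when $d\equiv 2\ (\mathrm{mod}\ 6)$ and $N=d/3$ when $d\equiv 0\ (\mathrm{mod}\ 6)$.

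Next, I would translate the evenness of $L$ into isotropy of $H$ with respect to the discriminant form $q_S\oplus q_T$. The value of $q_S$ on its generator is read off directly from $\ell^2$ via Lemma~\ref{lemma:neron-60}. For $q_T$ I would chain the Hodge isometry $T(\cA_X)\cong T(X)(-1)$ from \eqref{eqn:mukaiTT} with the isomorphism $\disc T(X)\cong\disc A(X)$ coming from the unimodularity of $H^4(X,\bZ)$, and then evaluate on a generator using the Gram matrices of $A(X)$ recalled in the proof of Lemma~\ref{lemma:dTcyclic}. Substituting the generator of $H$ into $q_S\oplus q_T$ yields, after routine simplification, the isotropy condition $b^2\equiv 1\ (\mathrm{mod}\ 2d)$ when $d\equiv 2\ (\mathrm{mod}\ 6)$, and $b^2\equiv 1\ (\mathrm{mod}\ 2d/3)$ when $d\equiv 0\ (\mathrm{mod}\ 6)$.

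Finally, I would count solutions. Because $d$ is even in both cases, each $b\in(\bZ/N)^*$ lifts to two elements of $(\bZ/2N)^*$ differing by $N$, and the quadratic condition above is independent of the lift. Hence $|\cM_{S,T}|=\tfrac12\cdot\#\{u\in(\bZ/2N)^*:u^2\equiv 1\}$. Splitting $2N$ into prime powers by CRT and using that $(\bZ/2^n)^*$ has $1,2,4$ involutions for $n=1,2,\geq 3$ respectively while every odd prime-power factor contributes a factor of $2$, a case analysis following the three shapes of $d$ listed in Proposition~\ref{prop:FMPartner} (and keeping track of the $2$-adic valuation of $2N$ in terms of that of $d$) produces exactly the values $2m$ and $m$ claimed. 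The main obstacle is the sign bookkeeping in the middle step: $q_{T(\cA_X)}$ equals $-q_{T(X)}$, which in turn equals $-q_{A(X)}$, so the two flips cancel to give $q_T=q_{A(X)}$. A slip in either flip would yield the wrong congruence $b^2\equiv -1$ and thus a wrong (often zero) count, so I would guard against it by verifying the result for small admissible discriminants such as $d=14,20,42$, where $\cM_{S,T}$ can be enumerated by hand.
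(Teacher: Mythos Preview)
Your proposal is correct and follows essentially the same route as the paper: parameterize $L\in\cM_{S,T}$ by a unit $b$ modulo $N$ (with $N=d$ or $d/3$), translate evenness of $L$ into a quadratic congruence in $b$, and count solutions.

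The only substantive difference is in how the quadratic congruence is handled. The paper obtains $3b^2c\equiv 1\pmod{2d}$ (resp.\ $b^2\equiv c\pmod{2d/3}$) with the constant $c$ characterized only by $t^2=cd$ (resp.\ $t^2=3cd$), and then invokes \cite{Ogu02}*{Lemma~4.5}, which shows that the solution count depends only on the modulus and not on $c$, provided the set is nonempty (which it is, since $\cM_{S,T}\neq\emptyset$). You instead propose to compute $c$ explicitly by identifying $q_T$ with $q_{A(X)}$ via the sign flips $q_{T(\cA_X)}=-q_{T(X)}=q_{A(X)}$, arriving at a congruence of the form $b^2\equiv 1$ and then counting square roots of $1$ directly by CRT. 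Carrying this out for $d\equiv 2\pmod 6$ one actually finds $(3b)^2\equiv 1\pmod{2d}$ rather than $b^2\equiv 1$ in the paper's normalization, but since $3$ is a unit modulo $2d$ this is harmless for the count; your final CRT tally reproduces exactly Oguiso's argument. So the two approaches differ only in whether one pins down the constant before or after counting, and your explicit route has the mild advantage of being self-contained rather than citing \cite{Ogu02}.
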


\begin{proof}
We continue assuming $d \equiv2\ (\mathrm{mod}\ 6)$ and will mention the changes needed for the case $d \equiv0\ (\mathrm{mod}\ 6)$ in Remark~\ref{rmk:case_d=0(mod6)}.
From the proof of Lemma~\ref{lemma:equalityTFM_MST}, we know that each $L\in\cM_{S,T}$ satisfies \eqref{eqn:generatorForLoverST}. We claim that the integer $b$ is uniquely determined as an element of $\bZ/d\bZ$. Indeed, if there is another integer $b'$ such that
\[
    L/(S\oplus T)
    =\left<\frac{\ell+3bt}{d}\right>
    =\left<\frac{\ell+3b't}{d}\right>,
\]
then $3(b-b')\equiv0\ (\mathrm{mod}\ d)$ and thus $b\equiv b'\ (\mathrm{mod}\ d)$.
Since $b$ generates $\bZ/d\bZ$, this determines a map
\begin{equation}
\label{eqn:MST->Z/20Z}
\xymatrix{
    \cM_{S,T}\ar[r] & (\bZ/d\bZ)^*
    :L\mapsto\overline{b}
}
\end{equation}
Suppose that $b$ is an integer such that $\overline{b}\in\bZ/d\bZ$ lies in the image of \eqref{eqn:MST->Z/20Z}; that is, there exists an $L\in\cM_{S,T}$ such that \eqref{eqn:generatorForLoverST} holds. Then $L$ is uniquely determined by
\[
    L = S+T+\left<\frac{\ell+3bt}{d}\right>
    \subseteq S^*\oplus T^*.
\]
Hence \eqref{eqn:MST->Z/20Z} is an injection.
Moreover, if an integral overlattice $L\supseteq S\oplus T$ satisfies \eqref{eqn:generatorForLoverST} with $\gcd{(b,d)}=1$, then $L$ has discriminant $3$ and both $S$ and $T$ are saturated in $L$.
As a consequence, the cardinality $|\cM_{S,T}|$ is the same as the number of $\overline{b}\in(\bZ/d\bZ)^*$ such that the overlattice
\[
    S\oplus T\subseteq
    S+T+\left<\frac{\ell+3bt}{d}\right>
    \subseteq S^*\oplus T^*
\]
is even. As $S$ and $T$ are both even, this is equivalent to
\[
    \left(\frac{\ell+3bt}{d}\right)^2
    = \frac{-3d+9b^2t^2}{d^2}
    \in 2\bZ.
\]
This implies that $t^2=cd$ for some integer $c$. Substituting this back into the relation above, we translate it into the equivalent form
$
    3b^2c \equiv1\ \mathrm{mod}\ 2d.
$
Note that the set
\[
    B_c\colonequals\{
        b\in(\bZ/d\bZ)^*:3b^2c \equiv1\ \mathrm{mod}\ 2d
    \}
\]
is nonempty since $\cM_{S,T}\neq\emptyset$.
The proof of \cite{Ogu02}*{Lemma~4.5} shows that if $c$ is an integer such that $B_c\neq\emptyset$, then the cardinality of $B_c$ is $2m$.
This finishes the proof.
\end{proof}

\begin{rmk}
\label{rmk:case_d=0(mod6)}
The proof for the case $d \equiv0\ (\mathrm{mod}\ 6)$ and not divisible by $9$ is essentially the same. In this case, we have $\ell^2=-\frac{1}{3}d$,
\[
S^*/S = \left<\frac{\ell}{d/3}\right>
\quad\text{and}\quad
T^*/T=\left<\frac{t}{d}\right>.
\]
For each $L\in\cM_{S,T}$, there is a unique $\overline{b}\in\big(\bZ/(\frac{d}{3})\bZ\big)^*$ such that
\[
    L = S+T+\left<\frac{3b\ell+t}{d}\right>
    \subseteq S^*\oplus T^*.
\]
Moreover, the cardinality of $\cM_{S,T}$ is the number of elements $\overline{b}\in\big(\bZ/(\frac{d}{3})\bZ\big)^*$
such that the overlattice
\[
    S\oplus T\subseteq
    S+T+\left<\frac{3b\ell+t}{d}\right>
    \subseteq S^*\oplus T^*
\]
is even, which is equivalent to
\[
    \left(\frac{3b\ell+t}{d}\right)^2
    = \frac{-3b^2d+t^2}{d^2}
    \in 2\bZ.
\]
This implies that $t^2=3cd$ for some integer $c$.
Substituting this back into the relation above, one gets
\[
b^2 \equiv c\ \mathrm{mod}\ \frac{2d}{3}.
\]
Since $\gcd(b, \frac{d}{3})=1$ and $d$ is divisible by $6$, we have $\gcd(b, \frac{2d}{3})=1$.
Hence $c$ is an integer such that $\gcd(c, \frac{d}{3})=1$ and the set
\[
    B_c\colonequals\left\{
        b\in\Big(\bZ/(\frac d3)\bZ\Big)^*:b^2 \equiv c\ \mathrm{mod}\ \frac{2d}{3}
    \right\}
\]
is nonempty.
Again using the proof of \cite{Ogu02}*{Lemma~4.5} and the fact that $d$ is not divisible by $9$, one can show that the cardinality of $B_c$ is $m$ if $B_c$ is nonempty.
\end{rmk}

\section{Birational involution on the Veronese locus}
\label{sect:involVeroLocus}

We prove our main theorems in this section, where the core machinery is the Cremona transformation of $\bP^5$ defined by the system of quadrics passing through the Veronese surface $V\subseteq\bP^5$.
We begin with the study of this map, especially on how it induces a birational involution $\sigma_V$ on the Veronese locus $\cC_{20}$.
Then we study its restriction to a cubic fourfold $X\supseteq V$ and prove that $\sigma_V$ realizes Fourier--Mukai partners.
Finally, we analyze how $\sigma_V$ acts on the loci in $\cC_{20}$ known to parametrize rational cubics and prove that new rational cubic fourfolds arise this way.

\subsection{Cremona transform defined by the Veronese surface}
\label{subsect:veroCremona}

Let $V\subseteq\bP^5$ be a Veronese surface, and let $I_V$ be its defining ideal.
According to \cite{CK89}*{Theorem~3.3}, the linear system $|I_V(2)|$ defines a birational map
\[\xymatrix{
    F_V\colon\bP^5\ar@{-->}[r]^-\sim & |I_V(2)|^\vee\cong\bP^5
}\]
such that the inverse $F_V^{-1}$ is also determined by a Veronese surface $V'$. We will assume that $F_V$ is an involution throughout the section. Note that this means $F_V = F_V^{-1}$, which implies that
$$
    V = \mathrm{Bs}(F_V)
    = \mathrm{Bs}(F_V^{-1})
    = V'\subseteq\bP^5
$$
and vice versa. This condition may not hold in general, but we can always achieve it by choosing a $\PGL(6,\bC)$-action to identify $V$ and $V'$.

Let us study how the map $F_V$ acts on the cubics containing $V$.
First of all, we can resolve the indeterminacy of $F_V$ by a single blowup \cite{CK89}. More precisely, the blowups $\mathrm{Bl}_V(\bP^5)$ and $\mathrm{Bl}_{V'}(\bP^5)$ can be canonically identified as the graph
\[
    \Gamma\colonequals\mathrm{graph}(F_V)\subseteq\bP^5\times\bP^5
\]
such that the projections $p$ and $p'$ onto the two copies of $\bP^5$ give the blowups of $\bP^5$ along $V$ and $V'$, respectively.
Together with $F_V$, these form a commutative diagram
\begin{equation}
\label{eqn:blowGamma}
\vcenter{\vbox{
\xymatrix{
	& \Gamma\ar[dl]_p\ar[dr]^{p'} &\\
	\bP^5\ar@{-->}[rr]^{F_V}_\sim && \bP^5.
}}}
\end{equation}

By applying the blowup formula to $p$, we conclude that the Picard group of $\Gamma$ has rank $2$, and is generated by the classes
\begin{itemize}
\setlength\itemsep{0pt}
    \item[] $H$: pullback of the hyperplane class on $\bP^5$ under $p$,
    \item[] $E$: the exceptional class of $p$.
\end{itemize}
Similarly, applying the blowup formula to $p'$ implies that $\Pic(\Gamma)$ is also generated by
\begin{itemize}
\setlength\itemsep{0pt}
    \item[] $H'$: pullback of the hyperplane class on $\bP^5$ under $p'$,
    \item[] $E'$: the exceptional class of $p'$.
\end{itemize}
The fact that $F_V$ is defined by the quadrics passing through $V$ implies that
\begin{equation}
\label{eqn:quadricsThroughVero}
    H' = 2H - E.
\end{equation}
Since the inverse $F_V^{-1}$ is defined in a similar way, we also have
\begin{equation}
\label{eqn:quadricsThroughVero'}
    H = 2H'-E'.
\end{equation}
Hence $e'=2h'-h=2(2h-e)-h$, and thus
\begin{equation}
\label{eqn:cubicSecVero}
    E' = 3H - 2E.
\end{equation}
Equations \eqref{eqn:quadricsThroughVero} and \eqref{eqn:cubicSecVero} provide the transformation rules between the two bases for $\Pic(\Gamma)$ induced by $p$ and $p'$. Moreover, \eqref{eqn:cubicSecVero} reflects the fact that the secant variety of $V$, which is projectively equivalent to the cubic defined by the determinant of matrix~\eqref{eqn:deterVero}, is contracted by $F_V$ onto $V'$.

\begin{prop}
\label{prop:birationalInvolC20}
The map $F_V$ induces a birational involution
\[\xymatrix{
    \sigma_V\colon\cC_{20}\ar@{-->}[r]^-\sim & \cC_{20}
}\]
by taking a cubic $X\supseteq V$ to its proper image $F_V(X)\subseteq\bP^5$. In general, the image $F_V(X)$ for a smooth cubic $X\supseteq V$ is still a cubic containing $V$, though it may be singular.
\end{prop}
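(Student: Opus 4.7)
The plan is to reduce the proposition to a divisor-class computation on the resolution $\Gamma$ of diagram~\eqref{eqn:blowGamma}. Proposition~\ref{prop:containsVeronese} ensures that, for $X\in\cC_{20}\setminus\cC_8$, there is an honest Veronese surface $V\subseteq X$, so the question becomes one about the strict transform $\widetilde{X}\subseteq\Gamma$ of $X$ under~$p$ and the pushforward $p'_{\ast}[\widetilde{X}]$ under~$p'$.

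First I would check that $X$ has multiplicity exactly one along $V$. Since both $V$ and $X$ are smooth, the local equation of $X$ at a general point of $V$ lies in the ideal $I_V$ but not in $I_V^{2}$---otherwise the Jacobian of $X$ would vanish along $V$, contradicting smoothness. Hence $p^{\ast}[X] = [\widetilde{X}] + E$, giving $[\widetilde{X}] = 3H - E$ in $\Pic(\Gamma)$. To rewrite this in the basis $(H',E')$, use \eqref{eqn:quadricsThroughVero'} directly to obtain $H = 2H'-E'$, and combine \eqref{eqn:quadricsThroughVero'} with \eqref{eqn:cubicSecVero} to solve $E = 3H'-2E'$. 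Substituting gives $[\widetilde{X}] = 3H'-E'$. Since $p'_{\ast}E' = 0$, pushing forward along $p'$ yields a cubic hypersurface $X' \colonequals F_V(X)\subseteq\bP^5$ containing $V' = V$, with multiplicity one recorded by the coefficient $-1$ of $E'$. Hence $X'\in\overline{\cC_{20}}$, and the assignment $X\mapsto X'$ defines a rational map $\sigma_V$ from $\cC_{20}$ to $\overline{\cC_{20}}$; the relation $F_V\circ F_V = \mathrm{id}$ forces $\sigma_V\circ\sigma_V = \mathrm{id}$ wherever both sides are defined.

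Finally, one must verify that the image of $\sigma_V$ lies generically in $\cC_{20}$ rather than in the boundary of $\overline{\cC_{20}}$, i.e.\ that $F_V(X)$ is smooth for generic $X$. The cleanest approach is to exhibit a single explicit example: pick a random smooth cubic $X_{0}\in|I_{V}(3)|$ using the $2\times 2$ minors of~\eqref{eqn:deterVero}, compute $F_V(X_0)$ symbolically, and verify smoothness of the image with a computer algebra system such as \textsc{Singular}. Since smoothness is an open condition on $\cC_{20}$, this would propagate to a dense open subset and complete the proof. The main obstacle is precisely this last step: once the multiplicity-one statement is granted the Picard-group computation is immediate, whereas a purely theoretical proof of smoothness of the generic image appears considerably more delicate than a symbolic check on one explicit cubic.
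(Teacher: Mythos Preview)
Your divisor-class computation $[\widetilde{X}] = 3H - E = 3H' - E'$ is exactly what the paper does, and your multiplicity-one argument is a pleasant justification of a step the paper leaves implicit.

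The substantive difference is in how generic smoothness of the image is handled. You propose an explicit computer verification on one example; this is valid but, as you note, it is the main obstacle in your outline. The paper sidesteps it entirely by first observing that $X \mapsto F_V(X)$ defines a rational self-map $\widetilde{\sigma}_V$ of the \emph{full} linear system $|I_V(3)|$, not just of its smooth locus: the class computation shows the target is again in $|I_V(3)|$, and $F_V^{-1} = F_V$ furnishes the inverse, so $\widetilde{\sigma}_V$ is a birational involution on the projective space $|I_V(3)|$. Since $U_{20}\subseteq|I_V(3)|$ is a dense open, any birational self-map of $|I_V(3)|$ automatically restricts to a birational self-map of $U_{20}$. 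Thus the generic image is smooth with no computation required, and what you flag as the hard step evaporates.

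The paper also treats the descent to $\cC_{20}$ more carefully than you do. It passes through the birational model $[U_{20}/\PGL_3(\bC)]\to\cC_{20}$ of~\eqref{eqn:quotientToC20} and observes that $F_V$ commutes with the $\PGL_3(\bC)$-action on $|I_V(3)|$ (automorphisms of $\bP^5$ preserving $V$), so $\widetilde{\sigma}_V$ descends to the quotient. Your sentence ``the assignment $X\mapsto X'$ defines a rational map $\sigma_V$ from $\cC_{20}$'' implicitly needs this equivariance, since otherwise the image class could depend on the particular embedding of $X$ relative to the fixed $V\subseteq\bP^5$. Relatedly, the appeal to Proposition~\ref{prop:containsVeronese} at the start of your argument is unnecessary here: the paper works throughout with cubics in $|I_V(3)|$ for a fixed $V$, and only uses Proposition~\ref{prop:containsVeronese} to identify that linear system birationally with $\cC_{20}$.
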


\begin{proof}
The strict transform on $\Gamma$ of a cubic fourfold $X\supseteq V$ represents the class $3H-E\in\Pic(\Gamma)$. Using \eqref{eqn:quadricsThroughVero} and \eqref{eqn:quadricsThroughVero'}, we can rewrite it as
\[
    3H-E = H+H' = 3H'-E'.
\]
This shows that the proper image $F_V(X)$ is a cubic containing $V$, hence proves the last assertion.
This also induces a rational map
\[\xymatrix{
    \widetilde{\sigma}_V\colon|I_V(3)|\ar@{-->}[r] & |I_V(3)|
}\]
which is birational as it admits an inverse defined by $F_V^{-1}$.

To show that $\widetilde{\sigma}_V$ descends as a birational involution $\sigma_V$ on $\cC_{20}$, it is sufficient to show that it descends to the birational model $[U_{20}/\PGL_3(\bC)]$ introduced in \eqref{eqn:quotientToC20}. The latter is true since the $\PGL_3(\bC)$-action commutes with $F_V$ by the definition of $F_V$, so the proof is completed.
\end{proof}

We will prove that the birational involution in Proposition~\ref{prop:birationalInvolC20} is nontrivial in Section~\ref{subsect:birationalVeroCubics}. In fact, we will show that it realizes pairs of non-isomorphic Fourier--Mukai partners. As a preparation, let us compute the intersection numbers between the classes in $\Pic(\Gamma)$.

\begin{lemma}
\label{lemma:heGamma}
The intersection numbers between $H,E\in\Pic(\Gamma)$ are
\[
	H^5 = 1,
	\quad H^4E = H^3E^2 = 0,
	\quad H^2E^3 = 4,
	\quad HE^4 = 18,
	\quad E^5 = 51.
\]
The same result holds if $H$ and $E$ are replaced by $H'$ and $E'$.
\end{lemma}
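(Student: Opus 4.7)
The plan is to exploit the realization of $p\colon\Gamma\to\bP^5$ as the blowup along the Veronese surface $V$, with exceptional divisor $E\cong\bP(N)$ for $N\colonequals N_{V/\bP^5}$. By the projection formula, any intersection number on $\Gamma$ containing a factor of $E$ reduces to a computation on the $\bP^2$-bundle $\pi\colon E\to V\cong\bP^2$. The number $H^5=1$ is immediate since $H=p^*h$, and the vanishings $H^4E=H^3E^2=0$ also come for free: $H|_E=\pi^*(h|_V)$ is pulled back from the surface $V$, so any monomial of degree $\geq 3$ in this class vanishes on $E$.

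For the three remaining numbers the first step is to compute the Chern classes of $N$. Combining the normal bundle sequence $0\to TV\to T\bP^5|_V\to N\to 0$ with the Euler sequence on $\bP^5$ restricted to $V$, and using $\cO_V(1)=\cO_{\bP^2}(2)$ from the Veronese embedding, a short calculation gives $c_1(N)=9\ell$ and $c_2(N)=30\ell^2$, where $\ell$ is the line class on $\bP^2$ (and $c_3(N)=0$ for dimension reasons). Setting $\xi\colonequals c_1(\cO_E(1))$ then yields the sign convention $E|_E=-\xi$, the identity $H|_E=2\pi^*\ell$, and the tautological relation
\[
    \xi^3 + 9\,\pi^*\ell\cdot\xi^2 + 30\,\pi^*\ell^2\cdot\xi = 0.
\]
Using these to rewrite the three top-degree monomials $(H|_E)^2(E|_E)^2$, $(H|_E)(E|_E)^3$, $(E|_E)^4$ on $E$, and integrating via $\pi_*\xi^2=1$, collapses each expression to $4$, $18$, and $51$ respectively.

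For the companion statement with $(H',E')$: the projection $p'\colon\Gamma\to\bP^5$ presents $\Gamma$ as the blowup of $\bP^5$ along $V'$, which under our standing assumption $F_V^2=\mathrm{id}$ is a Veronese surface abstractly isomorphic to $V$. The identical computation therefore applies verbatim with $(H',E')$ in place of $(H,E)$. The only real obstacle is bookkeeping, in particular keeping the sign convention $E|_E=-\xi$ consistent with the Chern-class relation on $\bP(N)$; once that is pinned down, every intersection number falls out mechanically.
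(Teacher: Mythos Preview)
Your argument is correct and is essentially the same approach as the paper's: both restrict the computation to the exceptional divisor $E\cong\bP(N_{V/\bP^5})$ and reduce everything to the invariants of the normal bundle. The only difference is packaging --- the paper states a general push-forward formula (Lemma~\ref{lemma:intSegre}) in terms of the Segre class $s(V,\bP^5)=c(N)^{-1}=1_V-9\ell+51$ and then reads off $H^{5-k}E^k=(-1)^{k-1}\int_V(2\ell)^{5-k}\cdot s(V,\bP^5)$, whereas you compute the equivalent Chern classes $c(N)=1+9\ell+30\ell^2$ and use the Grothendieck relation on $\bP(N)$ directly.
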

\begin{proof}
The equality $H^5 = 1$ follows from the fact that $H$ corresponds to the hyperplane class.
For the other intersection numbers, let us compute them using the Segre class $s(V,\bP^5)$. Under the embedding $i\colon V\hookrightarrow\bP^5$, we have
\[
	s(V,\bP^5)
	= c(N_{V/\bP^5})^{-1}
	= c(V)\cdot i^*c(\bP^5)^{-1}.
\]
Let us denote the fundamental class of $V$ as $1_V$, the canonical class as $K_V$, and the class of a line from the isomorphism $V\cong\bP^2$ as $\ell$.
Then
\[
	c(V)
	= 1_V - K_V + \chi(V)
	= 1_V + 3\ell + 3.
\]
On the other hand, using the relation $i^*H = 2\ell$, we obtain
\[
	i^*c(\bP^5)
	= (1_V + 2\ell)^6
	= 1_V + 12\ell + 60.
\]
It follows that
\begin{align*}
	s(V, \bP^5) &= (1_V + 3\ell + 3)\cdot (1_V + 12\ell + 60)^{-1}\\
	&= (1_V + 3\ell + 3)\cdot (1_V - 12\ell + 84)\\
	&= 1_V - 9\ell + 51.
\end{align*}
As a result,
\[
	H^{5-k}E^k
	= (-1)^{k-1}\int_{V}(2\ell)^{5-k}\cdot(1_V - 9\ell + 51)
	= \left\{\begin{array}{lll}
		0 & \mbox{if} & k=1, 2\\
		4 & \mbox{if} & k=3\\
		18 & \mbox{if} & k=4\\
		51 & \mbox{if} & k=5.
	\end{array}\right.
\]
The intersections between $H'$ and $E'$ are computed in the same way.
\end{proof}

\begin{cor}
The intersection numbers between $H,H'\in\Pic(\Gamma)$ are
\[
    H^5 = {H'}^5 = 1,
    \quad H^4H' = H{H'}^4 = 2,
    \quad H^3{H'}^2 = H^2{H'}^3 = 4.
\]
\end{cor}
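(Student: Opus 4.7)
The plan is to substitute the relation $H' = 2H - E$ from \eqref{eqn:quadricsThroughVero} into each of the products $H^{i}H'^{5-i}$, expand via the binomial theorem, and reduce to monomials in $H$ and $E$ whose values are given by Lemma~\ref{lemma:heGamma}. For instance, $H^4H' = H^4(2H-E) = 2H^5 - H^4E = 2\cdot 1 - 0 = 2$, and $H^3H'^2 = H^3(2H-E)^2 = 4H^5 - 4H^4E + H^3E^2 = 4$, both immediate from the stated intersection numbers. The equality $H'^5 = 1$ is built into Lemma~\ref{lemma:heGamma}, which asserts the same five numerical values hold when $(H,E)$ is replaced by $(H',E')$.

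For the two remaining products $HH'^4$ and $H^2H'^3$, I will use the symmetry of diagram \eqref{eqn:blowGamma}: since $p$ and $p'$ play interchangeable roles, one may reverse the substitution and use $H = 2H' - E'$ from \eqref{eqn:quadricsThroughVero'}. Then $HH'^4 = (2H'-E')H'^4 = 2H'^5 - H'^4E' = 2$ and $H^2H'^3 = (2H'-E')^2H'^3 = 4H'^5 - 4H'^4E' + H'^3E'^2 = 4$, again by the $(H',E')$-version of Lemma~\ref{lemma:heGamma}. As a consistency check, expanding $H^2H'^3 = H^2(2H-E)^3$ using the binomial coefficients $(8,-12,6,-1)$ yields $8H^5 - 12H^4E + 6H^3E^2 - H^2E^3 = 8 - 0 + 0 - 4 = 4$, which agrees.

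I anticipate no serious obstacle; the computation is purely formal once Lemma~\ref{lemma:heGamma} is in hand. The only minor point to attend to is bookkeeping of the binomial expansion of $(2H-E)^k$ for $k=2,3$ and confirming that each resulting monomial $H^i E^{5-i}$ is covered by the table in the lemma.
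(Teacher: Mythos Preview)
Your proposal is correct and follows essentially the same approach as the paper: substitute $H' = 2H - E$ and reduce to the monomial intersection numbers of Lemma~\ref{lemma:heGamma}. Your use of the symmetric relation $H = 2H' - E'$ for $HH'^4$ and $H^2H'^3$ is a mild shortcut (the paper just invokes $H' = 2H - E$ throughout), but both routes are equivalent and immediate.
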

\begin{proof}
These intersections can be computed directly from Lemma~\ref{lemma:heGamma} using the relation $H'=2H-E$.
\end{proof}

\begin{rmk}
These numbers record some geometric information about the map $F_V$. For instance, $F_V$ is birational since $\deg(F_V)={H'}^5=1$; a general line $\ell\subseteq\bP^5$ is mapped to a rational curve $F_V(\ell)$ of degree $H^4{H'}=2$, which reflects the fact that $F_V$ is defined by quadrics.
\end{rmk}

\subsection{Restricting the Cremona map to a cubic fourfold}
\label{subsect:birationalVeroCubics}

The purpose of this part is to improve Proposition~\ref{prop:birationalInvolC20} to the following.

\begin{thm}
\label{thm:cremonaFM}
The Cremona map $F_V$ induces a birational involution
\[\xymatrix{
    \sigma_V\colon\cC_{20}\ar@{-->}[r]^-\sim & \cC_{20}
}\]
by taking a cubic $X\supseteq V$ to its proper image $F_V(X)\subseteq\bP^5$. For a very general $X\in\cC_{20}$, the image $X'$ appears as the unique cubic fourfold such that $\cA_X\cong\cA_{X'}$ and $X\ncong X'$.
\end{thm}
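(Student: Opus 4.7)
The fact that $\sigma_V$ is a birational involution on $\cC_{20}$ has already been established in Proposition~\ref{prop:birationalInvolC20}, so the plan concerns only the Fourier--Mukai statement. First, because $\sigma_V$ is a birational involution, the locus in $\cC_{20}$ where $F_V(X)$ fails to be smooth is a proper closed subvariety, so $X'\colonequals F_V(X)$ is again in $\cC_{20}$ for very general $X$. The core of the argument is a Hodge-theoretic comparison: let $Y\subseteq\Gamma$ be the strict transform of $X$ under $p$. The identity $[Y]=3H-E=3H'-E'$ in $\Pic(\Gamma)$ identifies $Y$ simultaneously with the strict transform of $X'$ under $p'$, so both restrictions
\[
    p|_Y\colon Y\to X,\qquad p'|_Y\colon Y\to X'
\]
are blowups of codimension two along the same $V\cong\bP^2$. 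The blowup formula produces a decomposition
\[
    H^4(Y,\bZ)\;=\;(p|_Y)^*H^4(X,\bZ)\,\oplus\,\bZ\cdot j_{*}\eta^{*}\ell,
\]
which is orthogonal with respect to the intersection pairing (since the pairing against any pullback from $X$ lands in $H^6(V,\bZ)=0$), and in which the exceptional summand is algebraic. Hence $(p|_Y)^*$ identifies $T(X)$ isometrically with $T(Y)$, and likewise via $(p'|_Y)^*$, producing a Hodge isometry $\psi\colon T(X)\xrightarrow{\sim} T(X')$.

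Transporting $\psi$ across the Hodge isometry $T(\cA_X)\cong T(X)(-1)$ from~\eqref{eqn:mukaiTT} yields a Hodge isometry $T(\cA_X)\cong T(\cA_{X'})$. Because $\disc T(X)$ is cyclic by Lemma~\ref{lemma:dTcyclic}, Nikulin's extension theorem \cite{Nik79}*{Theorem~1.14.4} lifts this to a Hodge isometry $\mukaiH(\cA_X,\bZ)\cong\mukaiH(\cA_{X'},\bZ)$; by \cite{Huy17}*{Theorem~1.5~(iii)} this is equivalent to an equivalence $\cA_X\cong\cA_{X'}$ of Fourier--Mukai type.

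It remains to show $X\ncong X'$ for very general $X$, for then Proposition~\ref{prop:FMPartner} --- which gives exactly two Fourier--Mukai partners --- identifies $X'$ as the unique non-trivial one. For a very general $X\in\cC_{20}$ the Veronese $V$ is the only one contained in $X$, so any isomorphism $X\xrightarrow{\sim} X'$ would be induced by a $g\in\PGL_6(\bC)$ preserving $V$, i.e.~by an element of the stabilizer $\PGL_3(\bC)\subseteq\PGL_6(\bC)$. Because $F_V$ commutes with this $\PGL_3(\bC)$-action, the alternative ``$X\cong X'$ on a dense open of $\cC_{20}$'' would force $\sigma_V$ to act as the identity on the birational model $[U_{20}/\PGL_3(\bC)]$ of~\eqref{eqn:quotientToC20}. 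This can be ruled out by exhibiting a single smooth cubic $X\supseteq V$ --- for instance via the matrix presentation~\eqref{eqn:deterVero} in a computer algebra system such as \textsc{Singular} --- for which $F_V(X)$ is verifiably not in $\PGL_3(\bC)\cdot X$.

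The expected obstacle is this last step: the Hodge isometry $\psi$ coming from the roof $Y$ carries no a priori information about whether it is induced by a projective equivalence of $X$ with $X'$, so excluding the possibility that $F_V$ descends to the identity on the moduli requires an additional input of either a computational or a geometric nature. The earlier steps, by contrast, are a transport of Hodge structures through the standard blowup formula, combined with the lattice framework of \S\ref{subsect:transcendentalLattice}--\S\ref{subsect:countFM}.
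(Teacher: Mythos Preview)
Your setup for the Fourier--Mukai equivalence (the roof $Y$, the blowup formula, the transport to $T(\cA_X)$, and the appeal to Nikulin plus \cite{Huy17}) is the same as the paper's. The divergence is entirely in how you exclude $X\cong X'$.

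The paper does \emph{not} appeal to an explicit example. Instead it computes, via Lemmas~\ref{lemma:heY}--\ref{lemma:transRule}, how the two bases $\{h^2,v,\ell\}$ and $\{{h'}^2,v',\ell'\}$ of $A(Y)$ are related, and from this (Lemma~\ref{lemma:times9}) that the Hodge isometry $f_V^*\colon T(X')\to T(X)$ acts on $\disc T(X)\cong\bZ/20\bZ$ as multiplication by~$9$. If $X\cong X'$ there would also be a Hodge isometry $g^*$ acting as $1$ on the discriminant group; the composite $f_V^*\circ(g^*)^{-1}$ is then a Hodge self-isometry of $T(X)$ inducing $\times 9$ on $\disc T(X)$, contradicting Lemma~\ref{lemma:isometryT(X)}, which says the only Hodge self-isometries are $\pm 1$. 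This argument is purely lattice-theoretic, requires no computer verification, and as a bonus pins down exactly which of the two isometry classes on $T(X)$ the Cremona map realizes.

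Your route --- reduce to $\sigma_V\neq\mathrm{id}$ on $[U_{20}/\PGL_3(\bC)]$ via uniqueness of the Veronese, then exhibit a single $X$ with $F_V(X)\notin\PGL_3(\bC)\cdot X$ --- is logically sound, but the last step is genuinely nontrivial to carry out: deciding orbit membership under $\PGL_3(\bC)$ on $|I_V(3)|\cong\bP^{27}$ means either computing a separating set of invariants or exhaustively testing a positive-dimensional group, neither of which is an off-the-shelf \textsc{Singular} command. You flag this yourself as ``the expected obstacle,'' and indeed it is the gap: as written, the proposal is a correct outline that still owes the reader one hard computation, whereas the paper's discriminant-group trick closes the argument in a paragraph.
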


Let $X$ be a cubic fourfold containing a Veronese surface $V$.
Then the restriction of $F_V$ to $X$ produces a birational map
\[\xymatrix{
    f_V\colon X\ar@{-->}[r]^-\sim & X'\colonequals F_V(X)
}\]
where $X'$ is again a cubic containing a Veronese surface $V'$. Here we assume that $X$ is general enough so that $X'$ is smooth. Our strategy in proving the main theorem is to compare the Hodge structures of $X$ and $X'$ via the resolution of $f_V$.

We obtain the resolution by taking the restriction of diagram \eqref{eqn:blowGamma}:
\begin{equation}
\label{eqn:blowY}
\vcenter{\vbox{
\xymatrix{
	& Y\ar[dl]_{\pi}\ar[dr]^{\pi'} &\\
	X\ar@{-->}[rr]^{f_X}_\sim && X'
}}}
\end{equation}
where $\pi$ and $\pi'$ are the blowups at $V$ and $V'$, respectively.
Applying the blowup formula to $\pi$ gives a decomposition
\begin{equation}
\label{eqn:hodgeLatticeIsom}
    H^4(Y,\bZ)\cong H^4(X,\bZ)\oplus H^2(V,\bZ)(-1)
\end{equation}
which preserves the lattice and the Hodge structures. The same decomposition holds with $X$ replaced by $X'$ by applying the same formula to $\pi'$. These induce the Hodge isometries between the transcendental lattices
\begin{equation}
\label{eqn:transcendentalIsom}
\xymatrix{
    T(X)\ar[r]^-{\pi^*}_-\sim
    & T(Y)
    & T(X').\ar[l]_-{{\pi'}^*}^-\sim
}
\end{equation}
In particular, this implies that $X$ and $X'$ are Fourier--Mukai partners.

The difficult part is to show that $X$ and $X'$ are not isomorphic. To attain this goal, we study the restriction of \eqref{eqn:hodgeLatticeIsom} to the algebraic parts
\begin{equation}
\label{eqn:algebraicIsom}
    A(Y)\cong A(X)\oplus A(V)(-1).
\end{equation}
Due to this decomposition, $A(Y)$ contains the classes
\begin{itemize}
\setlength\itemsep{0pt}
    \item[] $h^2$: square of the class $h$ of a hyperplane section on $X$,
    \item[] $v$: the class of $V$ on $X$,
    \item[] $\ell$: the class of a line in $V\cong\bP^2$,
\end{itemize}
which have intersection pairings
\begin{equation}
\label{eqn:labellingOnY}
\begin{pmatrix}
    3 & 4 & 0\\
    4 & 12 & 0\\
    0 & 0 & -1
\end{pmatrix}.
\end{equation}
Decomposition \eqref{eqn:algebraicIsom} still holds with $X$ and $V$ replaced by $X'$ and $V'$, respectively; hence $A(Y)$ also contains
\begin{itemize}
\setlength\itemsep{0pt}
    \item[] ${h'}^2$: square of the class $h'$ of a hyperplane section on $X'$,
    \item[] $v'$: the class of $V'$ on $X'$,
    \item[] $\ell'$: the class of a line in $V'\cong\bP^2$,
\end{itemize}
whose intersection pairings coincide with \eqref{eqn:labellingOnY}.
We will also need the classes
\begin{itemize}
\setlength\itemsep{0pt}
    \item[] $e$: the class of the exceptional divisor of $\pi\colon Y\to X$,
    \item[] $e'$: the class of the exceptional divisor of $\pi'\colon Y\to X'$. 
\end{itemize}
Before proving the main theorem, let us establish a number of lemmas which will also be used in Section~\ref{subsect:actionOnRationalCubics}.

\begin{lemma}
\label{lemma:heY}
The intersection numbers between $h$ and $e$ are
\[
	h^4 = 3,\quad h^3e = 0,\quad h^2e^2 = -4,\quad he^3 = -6, \quad e^4 = 3.
\]
The same equalities hold with $h$ and $e$ replaced by $h'$ and $e'$, respectively.
\end{lemma}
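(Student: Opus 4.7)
The plan is to apply the Segre-class formula of Lemma~\ref{lemma:intSegre} to the blowup $\pi\colon Y\to X$ along the Veronese surface $V\cong\bP^2$, in essentially the same way the lemma was used in \S\ref{subsect:veroCremona} for the blowup $p\colon\Gamma\to\bP^5$. The equality $h^4=3$ is the only one not involving the exceptional divisor, and it follows at once from the projection formula: since $h=\pi^*h_X$ for the hyperplane class $h_X$ on $X$, we have $\int_Y h^4=\int_X h_X^4=\deg X=3$. Each of the four remaining intersections has the form $h^{4-k}e^k$ with $1\le k\le 4$, which is exactly the output format of Lemma~\ref{lemma:intSegre}.

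The substantive step is therefore to compute the Segre class $s(V,X)$. Writing $\ell$ for the class of a line on $V\cong\bP^2$, one has $c(V)=(1+\ell)^3=1+3\ell+3\ell^2$ and $i^*h_X=2\ell$. From the normal sequence of $X\subseteq\bP^5$ together with the Euler sequence on $\bP^5$, one obtains $c(T_X)=(1+h_X)^6/(1+3h_X)$, so, truncating modulo $\ell^3$,
\[
i^*c(T_X)=\frac{(1+2\ell)^6}{1+6\ell}=(1+12\ell+60\ell^2)(1-6\ell+36\ell^2)=1+6\ell+24\ell^2,
\]
and hence
\[
s(V,X)=c(V)\cdot i^*c(T_X)^{-1}=(1+3\ell+3\ell^2)(1-6\ell+12\ell^2)=1_V-3\ell-3[\mathrm{pt}].
\]

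Feeding this into $h^{4-k}e^k=(-1)^{k-1}\int_V i^*h_X^{4-k}\cdot s(V,X)$ then disposes of each case by inspection: $h^3e=0$ since $i^*h_X^3=8\ell^3=0$ on $\bP^2$; $h^2e^2=-\int_V 4\ell^2\cdot 1_V=-4$; $he^3=\int_V 2\ell\cdot(-3\ell)=-6$; and $e^4=-\int_V s(V,X)=3$. For the analogous identities with $h'$ and $e'$ in place of $h$ and $e$, I would simply rerun the same computation with $(X,V,\pi)$ replaced by $(X',V',\pi')$; since $X'$ is again a smooth cubic fourfold containing a Veronese surface, every step goes through verbatim and produces the same numerical answers. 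The calculation is entirely mechanical and has no real obstacle; the one point requiring attention is that we must use $c(T_X)$ rather than $c(T_{\bP^5})$ when computing $s(V,X)$, which is precisely what distinguishes this application of Lemma~\ref{lemma:intSegre} from the earlier computation of $s(V,\bP^5)$ in \S\ref{subsect:veroCremona}.
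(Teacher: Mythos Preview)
Your argument is correct, but it differs from the paper's proof. The paper does not recompute a Segre class for $V\subseteq X$; instead it exploits that $Y$ sits inside $\Gamma$ as a divisor of class $3H-E$ and obtains each number by restricting the already-known intersections on $\Gamma$ from Lemma~\ref{lemma:heGamma}, e.g.\ $h^2e^2=(3H-E)H^2E^2=-H^2E^3=-4$. That route is quicker here because $s(V,\bP^5)$ was computed earlier, so the work reduces to five one-line expansions against the table in Lemma~\ref{lemma:heGamma}.

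Your approach, by contrast, is intrinsic to $Y\to X$: you feed $s(V,X)$ directly into Lemma~\ref{lemma:intSegre}, and the only new ingredient is the extra factor $(1+3h_X)^{-1}$ coming from the normal bundle of $X$ in $\bP^5$. This is a perfectly good independent derivation, and it has the mild advantage that it would still work even if one had not first analyzed $\Gamma$; the price is redoing a Segre-class computation that the paper's restriction trick bypasses.
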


\begin{proof}
These numbers can be computed by using Lemma~\ref{lemma:heGamma} and the fact that $Y = 3H - E$ in $\Pic(\Gamma)$. More explicitly, we have
\begin{align*}
h^4     &= (3H - E)H^4 = 3H^5 = 3,\\
h^3e    &= (3H - E)H^3E = 0,\\
h^2e^2  &= (3H - E)H^2E^2 = -H^2E^3 = -4,\\
he^3    &= (3H - E)HE^3 = 3H^2E^3 - HE^4 = 3\cdot 4 - 18 = -6,\\
e^4     &= (3H - E)E^4 = 3HE^4 - E^5 = 3\cdot 18 - 51 = 3.
\end{align*}
The computations of the intersections between $h'$ and $e'$ are the same.
\end{proof}

\begin{lemma}
\label{lemma:veroLine}
The classes $v$ and $\ell$ can be expressed as
\begin{enumerate}[label=\textup{(\arabic*)}]
\setlength\itemsep{0pt}
    \item\label{veroLine:line_hyperExcept}
    $\ell = \frac{1}{2}he$,
    \item\label{veroLine:vero_LineExcept}
    $v = \frac{3}{2}he-e^2 = 3\ell-e^2$.
\end{enumerate}
The same equations hold with $h,e,v,\ell$ replaced by $h',e',v',\ell'$, respectively.
\end{lemma}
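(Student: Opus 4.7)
By the blowup formula applied to $\pi\colon Y\to X$, the codimension two algebraic classes on $Y$ decompose as
\[
    \pi^*A(X)\,\oplus\,j_*\eta^*\Pic(V),
\]
where $j\colon E\hookrightarrow Y$ is the inclusion of the exceptional divisor and $\eta\colon E = \bP(N_{V/X})\to V$ is the natural $\bP^1$-bundle projection. Under this decomposition, the classes denoted $v$ and $\ell$ in the statement correspond respectively to $\pi^*[V]$ and $j_*\eta^*[\text{line}]$. The two identities will then follow from standard blowup and excess intersection calculations, using only that the Veronese embedding is by conics and that $X$ is Fano of index three.

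For (1), I would compute $h e$ by restricting $\pi^*h$ to $E$. Because the Veronese embedding $V\cong\bP^2\hookrightarrow\bP^5$ is cut out by conics, $i^*h = 2\ell$. The projection formula then gives
\[
    h\cdot e \;=\; j_*\bigl(j^*\pi^*h\bigr) \;=\; j_*\eta^*(2\ell) \;=\; 2\ell,
\]
which immediately yields $\ell = \tfrac{1}{2}he$.

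For (2), I would apply the excess intersection formula in codimension two:
\[
    \pi^*[V] \;=\; j_*\bigl(c_1(Q)\bigr),\qquad Q \colonequals \eta^*N_{V/X}\big/\cO_E(-1).
\]
Two short inputs complete the computation. The adjunction sequence $0\to T_V\to T_X|_V\to N_{V/X}\to 0$, together with $c_1(T_X)=3h$, $c_1(T_V)=3\ell$, and $i^*h=2\ell$, gives $c_1(N_{V/X})=3\ell$. The identification $N_{E/Y}\cong\cO_E(-1)$ gives $e|_E = -c_1(\cO_E(1))$, hence $j_*c_1(\cO_E(1)) = -e^2$. Combining these with (1) yields
\[
    v \;=\; \pi^*[V] \;=\; 3\,j_*\eta^*\ell \;+\; j_*c_1(\cO_E(1)) \;=\; 3\ell - e^2 \;=\; \tfrac{3}{2}he - e^2.
\]
The primed identities follow by applying the same argument verbatim to $\pi'\colon Y\to X'$, since $V'\subseteq X'$ is again a Veronese surface.

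The only delicate step is the sign bookkeeping in the excess intersection formula—namely distinguishing $c_1(\cO_E(1))$, $c_1(\cO_E(-1))$, and $e|_E$. Once that is settled, both identities reduce cleanly to the two geometric inputs noted above.
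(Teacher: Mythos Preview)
Your proposal is correct and follows essentially the same approach as the paper: for (1) both use the projection formula together with $i^*h=2\ell$ to get $he=j_*j^*h=2\ell$, and for (2) both invoke the blowup/excess intersection identity $\pi^*[V]=j_*c_1(\mathcal{Q})$ for the universal quotient $\mathcal{Q}=\eta^*N_{V/X}/\cO_E(-1)$, compute $c_1(N_{V/X})=3\ell$, and identify $j_*c_1(\cO_E(-1))=e^2$. Your sketch is in fact slightly more explicit about the adjunction computation of $c_1(N_{V/X})$ and the sign bookkeeping, which the paper leaves as ``standard technique.''
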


\begin{proof}
Let us retain the notation introduced right before Proposition~\ref{prop:birationalInvolC20}, so that $E$ is the exceptional divisor for the blowup $\Gamma = \mathrm{Bl}_V\bP^5\to\bP^5$, and $E|_Y$ is the exceptional divisor for the blowup $Y = \mathrm{Bl}_VX\to X$. To prove item~\ref{veroLine:line_hyperExcept}, let us consider the fiber square
\begin{equation}
\label{eqn:fiberSquare_X}
\xymatrix{
	E|_Y\ar[d]_\eta\ar@{^(->}[r]^j & Y\ar[d]^{\pi}\\
	V\ar@{^(->}[r]^i & X.
}
\end{equation}
Let $\overline{\ell}\in\Pic(V)$ be the class of a line and $\overline{h}\in\Pic(X)$ be the class of a hyperplane section.
Note that $2\overline{\ell} = i^*\overline{h}$ since $i$ is the Veronese embedding. We also have $h = \pi^*\overline{h}$ from the definition of $h$.
Using these relations, we verify that
\[
	2\ell
	= j_*\eta^*(2\overline{\ell})
	= j_*\eta^*(i^*\overline{h})
	= j_*j^*\pi^*\overline{h}
	= j_*j^*h
	= he.
\]
Hence $\ell = \frac{1}{2}he$, which proves item~\ref{veroLine:line_hyperExcept}. The computation for $\ell'$ is the same.

Now we have $\frac{3}{2}he-e^2 = 3\ell-e^2$ by item~\ref{veroLine:line_hyperExcept}. To prove item~\ref{veroLine:vero_LineExcept}, we need to show that this class equals $v$.
Let $\mathcal{Q}$ be the universal quotient bundle on $E|_Y\cong\bP(N_{V/X})$ so that there is an exact sequence
\[\xymatrix{
    0\ar[r]
    & \cO_E(-1)\ar[r]
    & \eta^*N_{V/X}\ar[r]
    & \mathcal{Q}\ar[r]
    & 0.
}\]
By \cite{EH16}*{Theorem~13.14}, diagram \eqref{eqn:fiberSquare_X} induces the split exact sequence of Chow groups
\[\xymatrix{
    0\ar[r]
    & \mathrm{CH}(V)\ar[r]^-{(i_*,\gamma)}
    & \mathrm{CH}(X)\oplus \mathrm{CH}(E|_Y)\ar[r]^-{\pi^*+j_*}
    & \mathrm{CH}(Y)\ar[r]
    & 0
}\]
where $\gamma\colon\mathrm{CH}(V)\to\mathrm{CH}(E|_Y)$ is defined by $\gamma(x)=-c_1(\mathcal{Q})\eta^*(x)$.
The exactness in the middle implies that
$
    v = \pi^*i_*(1_V) = j_*c_1(\mathcal{Q}).
$
On the other hand, a standard computation shows that
$
    c_1(N_{V/X}) = 3\overline{\ell}.
$
Now we combine these to get
\begin{gather*}
    v = j_*c_1(\mathcal{Q})
    = j_*\eta^*c_1(N_{V/X}) - j_*c_1(\cO_E(-1))\\
    = 3j_*\eta^*\overline{\ell} - e^2
    = 3\ell - e^2.
\end{gather*}
This proves item~\ref{veroLine:vero_LineExcept}.
The computation for $v'$ is the same.
\end{proof}

\begin{lemma}
\label{lemma:transRule}
The two sets of vectors $\{h^2,v,\ell\}$ and $\{{h'}^2,v',\ell'\}$ transform into each other in the following ways:
\[
\begin{array}{ccl}
{h'}^2 &=& 4h^2 - v - 5\ell\\
v' &=& v\\
\ell' &=& 3h^2 - v - 4\ell
\end{array}
\qquad
\begin{array}{ccl}
h^2 &=& 4{h'}^2 - v' - 5\ell'\\
v &=& v'\\
\ell &=& 3{h'}^2 - v' - 4\ell'
\end{array}
\]
\end{lemma}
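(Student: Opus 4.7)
The plan is to lift the identities from $Y$ up to the graph $\Gamma$, where the relations between $H, E, H', E'$ are already given in \eqref{eqn:quadricsThroughVero} and \eqref{eqn:cubicSecVero}. Since $Y\subseteq\Gamma$ is the common strict transform of $X$ and $X'$, restricting those relations to $Y$ produces
\[
    h' = 2h - e,\qquad e' = 3h - 2e,
\]
together with the symmetric pair $h = 2h' - e'$, $e = 3h' - 2e'$. This reduces the whole statement to a computation in the degree-two part of $A(Y)$ spanned by the monomials $h^2$, $he$, $e^2$.

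I would then substitute these relations into the formulas of Lemma~\ref{lemma:veroLine} and expand. One finds
\[
    {h'}^2 = 4h^2 - 4he + e^2,\qquad
    \ell' = \tfrac{1}{2}h'e' = 3h^2 - \tfrac{7}{2}he + e^2,
\]
together with $(e')^2 = 9h^2 - 12he + 4e^2$, from which
\[
    v' = 3\ell' - (e')^2 = \tfrac{3}{2}he - e^2.
\]
To convert to the basis $\{h^2, v, \ell\}$, I apply Lemma~\ref{lemma:veroLine} on the unprimed side, namely $he = 2\ell$ and $e^2 = 3\ell - v$. This immediately yields ${h'}^2 = 4h^2 - v - 5\ell$, $\ell' = 3h^2 - v - 4\ell$, and $v' = v$, matching the first column of the claimed identities.

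The second column, expressing $h^2, v, \ell$ in terms of ${h'}^2, v', \ell'$, follows from the same computation after interchanging the roles of $\pi$ and $\pi'$; alternatively one simply inverts the $3\times 3$ matrix just obtained, a step that is trivial once $v' = v$ is in hand. Since everything reduces to polynomial expansions in the two-generator subring $\bZ[h,e]\subseteq A(Y)$ modulo the known relations, no step poses a genuine obstacle. The only bookkeeping issue is to keep the two exceptional divisors $e$ and $e'$ separate, since they sit over different Veronese surfaces, and to apply the primed versus unprimed version of Lemma~\ref{lemma:veroLine} at the appropriate moment.
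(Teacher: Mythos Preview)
Your proof is correct and follows essentially the same route as the paper: restrict the Picard relations $h'=2h-e$, $e'=3h-2e$ from $\Gamma$ to $Y$, expand the monomials, and rewrite via Lemma~\ref{lemma:veroLine}. The paper likewise obtains the second column either by symmetry or by checking that the $3\times3$ transition matrix is its own inverse, just as you suggest.
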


\begin{proof}
These relations can be derived straightforwardly from the equations $h' = 2h - e$ and $e' = 3h - 2e$ and Lemma~\ref{lemma:veroLine}. First we compute ${h'}^2$:
\begin{align*}
    {h'}^2
    &= (2h - e)^2
    = 4h^2 - 4he + e^2\\
    &= 4h^2 - 8\ell + (3\ell - v)
    = 4h^2 - v - 5\ell.
\end{align*}
Next we compute $v'$:
\begin{align*}
    v'
    &= \frac{3}{2}h'e'-{e'}^2
    = \frac{3}{2}(2h-e)(3h-2e)-(3h-2e)^2\\
    &= \frac{3}{2}he - e^2
    = v.
\end{align*}
Finally we compute $\ell'$:
\begin{align*}
    \ell'
    &= \frac{1}{2}h'e'
    = \frac{1}{2}(2h-e)(3h-2e)\\
    &= \frac{1}{2}(6h^2 - 7he + 2e^2)
    = \frac{1}{2}(6h^2 - 14\ell + 2(3\ell-v))\\
    &= 3h^2 - v - 4\ell.
\end{align*}

The inverse transformation can be computed in the same way. Alternatively, one can verify that the transformation matrix
\[
M\colonequals
\begin{pmatrix}
4 & 0 & 3\\
-1 & 1 & -1\\
-5 & 0 & -4
\end{pmatrix}
\]
is involutive; that is, $M^2 = \mathrm{id}$, which implies that the inverse transformation has the same expression as the original one.
\end{proof}

As a preparation for the next lemma, let us recall that, for a very general $X\in\cC_{20}$, the lattice $A(X)$ has Gram matrix
\[
    \begin{pmatrix}
        3 & 4\\
        4 & 12
    \end{pmatrix}.
\]
Moreover, there are isomorphisms
\[\xymatrix{
	\disc{T(X)}\ar[r]^-\sim &
	\disc{A(X)}\ar[r]^-\sim &
	\bZ/20\bZ,
}\]
where the first isomorphism follows from the fact that $H^4(X, \bZ)$ is unimodular, and the second one can be verified directly (cf.~Lemma~\ref{lemma:dTcyclic}).

\begin{lemma}
\label{lemma:times9}
Assume that $X\in\cC_{20}$ is very general.
Then \eqref{eqn:transcendentalIsom} induces an isometry
\[\xymatrix{
    \disc({\pi^*}^{-1}\circ{\pi'}^*)
    \colon
	\disc T(X)\ar[r]^-\sim
	& \disc T(X')
}\]
which acts as the multiplication by $9$ after identifying $\disc T(X)$ and $\disc T(X')$ with $\bZ/20\bZ$ in a canonical way.
\end{lemma}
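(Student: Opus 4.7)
The plan is to transfer everything to the common resolution $Y$, where both transcendental lattices are identified via $\pi^*$ and $\pi'^*$, and then perform a short modular arithmetic computation in $\disc A(Y)$. Since the Veronese surface $V\cong\bP^2$ contributes only algebraically to the cohomology, I would first observe that the decomposition \eqref{eqn:hodgeLatticeIsom} restricts to
\[
T(Y) \;=\; \pi^* T(X) \;=\; \pi'^* T(X'), \qquad A(Y)\;=\;\pi^* A(X)\oplus\bZ\ell\;=\;\pi'^* A(X')\oplus\bZ\ell'.
\]
Because $\ell^2=(\ell')^2=-1$ by \eqref{eqn:labellingOnY}, each $\bZ\ell$--summand is unimodular, so the natural inclusions induce canonical isomorphisms $\disc A(X)\cong\disc A(Y)\cong\disc A(X')$. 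Composing these with the canonical identifications $\disc T(\cdot)\cong\disc A(\cdot)$ coming from the unimodularity of the ambient $H^4$, the map in the lemma factors as $\disc T(X)\cong\disc A(X)\cong\disc A(Y)\cong\disc A(X')\cong\disc T(X')$, and it suffices to analyze this composition.

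Next I would fix canonical generators. A direct check using the Gram matrix \eqref{eqn:labelling20} shows that $v^*\colonequals\frac{-4h^2+3v}{20}$ has order $20$ in $A(X)^*/A(X)$, and similarly $(v')^*\colonequals\frac{-4(h')^2+3v'}{20}$ generates $A(X')^*/A(X')$; sending $v^*,(v')^*\mapsto 1$ pins down the ``canonical'' identifications with $\bZ/20\bZ$ on the two sides. The main step is then a single substitution using Lemma~\ref{lemma:transRule}: since $(h')^2=4h^2-v-5\ell$ and $v'=v$,
\[
(v')^*\;=\;\frac{-4(4h^2-v-5\ell)+3v}{20}\;=\;\frac{-16h^2+7v}{20}+\ell\;\equiv\;\frac{-16h^2+7v}{20}\pmod{A(Y)}.
\]
Seeking $k\in\bZ/20\bZ$ with $k\,v^*\equiv(v')^*\pmod{A(Y)}$ amounts to solving $-4k\equiv-16\pmod{20}$ and $3k\equiv 7\pmod{20}$, whose unique common solution is $k\equiv 9\pmod{20}$.

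The only genuine subtlety --- not a real obstacle but worth handling carefully --- is that the word \emph{canonical} hides a sign ambiguity: swapping $v^*\to -v^*$ (respectively $(v')^*\to -(v')^*$) on one side would toggle the answer between $\times 9$ and $\times 11=\times(-9)$. This is resolved by using the same explicit dual--vector formula on both sides, as above, after which the computation is purely mechanical. As a sanity check, $9\cdot 9\equiv 1\pmod{20}$, so the resulting automorphism squares to the identity, which is consistent with $\sigma_V$ being an involution.
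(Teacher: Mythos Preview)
Your proof is correct and follows essentially the same strategy as the paper: pass to $A(Y)$ via the unimodular summand $\bZ\ell$, pick an explicit generator of the cyclic discriminant group, and apply the transformation rules of Lemma~\ref{lemma:transRule}. The only cosmetic difference is that the paper works with the dual of $e^2$ in the basis $\{h^2,e^2,\ell\}$ (invoking Lemma~\ref{lemma:veroLine} to rewrite it), whereas you work directly with the dual of $v$ in $\{h^2,v\}$; since $e^2=3\ell-v$ these generators coincide up to sign, and your route is in fact slightly shorter.
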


\begin{proof}
Let us define
$
    \widetilde{A}(X)\colonequals\langle h^2,v,\ell\rangle
$
and
$
    \widetilde{A}(X')\colonequals\langle {h'}^2,v',\ell'\rangle.
$
Since $X$ is very general, the blowup formula induces the Hodge isometries
\begin{gather*}
\xymatrix{
    \widetilde{A}(X)\ar[r]^-\sim
    & A(Y)
    & \widetilde{A}(X')\ar[l]_-\sim
}\\
\xymatrix{
    T(X)\ar[r]^-\sim
    & T(Y)
    & T(X')\ar[l]_-\sim
}
\end{gather*}
Because $A(Y)$ and $T(Y)$ form orthogonal complements in $H^4(Y,\bZ)$, these isometries induce the commutative diagram
\[\xymatrix{
\disc\widetilde{A}(X)\ar[d]\ar[r]^-\alpha
& \disc\widetilde{A}(X')\ar[d]\\
\disc T(X)\ar[r]^-\beta
& \disc T(X')
}\]
where $\beta$ coincides with the isometry in our lemma.
Using this diagram, we translate the problem on how $\beta$ acts to the problem on how $\alpha$ acts.

Let $(e^2)^*\in\disc\widetilde{A}(X)^*$ denote the dual element of $e^2$ under the choice of basis $\{h^2,e^2,\ell\}$. Using Lemmas~\ref{lemma:heY} and \ref{lemma:veroLine}, it is easy to check that
\[
	(e^2)^* = \frac{1}{20}(4h^2 + 3e^2 - 9\ell).
\]
In particular, $(e^2)^*$ provides a generator for $\disc\widetilde{A}(X)\cong\bZ/20\bZ$.
With respect to this choice, we take the element
\[
	({e'}^2)^* = \frac{1}{20}(4{h'}^2 + 3{e'}^2 - 9\ell')
\]
as our generator for $\disc\widetilde{A}(X')$.
To compare $(e^2)^*$ and $({e'}^2)^*$, we first use Lemmas~\ref{lemma:veroLine} and \ref{lemma:transRule} to obtain
\[
    e^2 = 9{h'}^2 + 4{e'}^2 - 24\ell'
\]
and then rewrite $(e^2)^*$ as
\begin{gather*}
\frac{1}{20}(
4(4{h'}^2 + {e'}^2 - 8\ell')
+ 3(9{h'}^2 + 4{e'}^2 - 24\ell')
- 9(3{h'}^2 + {e'}^2 - 7\ell')
)\\
= \frac{1}{20}(16{h'}^2 + 7{e'}^2 - 41\ell').
\end{gather*}
It follows that
\[
	9({e'}^2)^* - (e^2)^*
	= \frac{1}{20}(20{h'}^2 + 20{e'}^2 - 40\ell')
	= {h'}^2 + {e'}^2 - 2\ell'
\]
which is a lattice element. This means that $\alpha$ works by mapping $(e^2)^*$ to $9({e'}^2)^*$, i.e. as the multiplication by $9$, so we finished the proof.
\end{proof}

\begin{proof}[Proof of Theorem~\ref{thm:cremonaFM}]
The map $\sigma_V$ is well defined and is involutive according to Proposition~\ref{prop:birationalInvolC20}.
The remaining thing to prove is the fact that $\sigma_V$ maps a very general member of $\cC_{20}$ to its unique non-isomorphic Fourier--Mukai partner.

Let $X\in\cC_{20}$ be a very general member, and let $X'\colonequals F_V(X)$. Assume, to the contrary, that $\sigma_V$ is the identity. Then there exists a projective isomorphism $g\colon X\xrightarrow{\sim}X'$
which induces a Hodge isometry
\[\xymatrix{
    g^*\colon H^4(X',\bZ)\ar[r]^-\sim & H^4(X,\bZ)
}\]
such that $g^*({h'}^2) = h^2$ and $g^*(v') = v$. Together with the map $f_V$, these produce two Hodge isometries between the transcendental lattices
\[\xymatrix{
    T(X')\ar@<3pt>[r]^-{g^*}\ar@<-3pt>[r]_-{f_V^*} & T(X)
}\]
hence induce two maps between the discriminant groups
\[\xymatrix{
    \disc T(X)\ar@<3pt>[r]^-{\disc g^*}\ar@<-3pt>[r]_-{\disc f_V^*} & \disc T(X').
}\]
It is clear that $\disc g^*$ acts as the multiplication by $1$ from the construction. On the other hand, $\disc f_V^*$ acts as the multiplication by $9$ according to Lemma~\ref{lemma:times9}.
It follows that, as a Hodge isometry acting on $T(X)$, the composition $f_V^*\circ{g^*}^{-1}$ induces an action on $\disc T(X)\cong\bZ/20\bZ$ which is neither the identity nor the rescaling by $-1$. However, this is forbidden by Lemma~\ref{lemma:isometryT(X)}.

As a result, $X$ and $X'$ are not isomorphic. They are Fourier--Mukai partners since their transcendental lattices are isomorphic by (\ref{eqn:transcendentalIsom}). Moreover, $X'$ is the unique such partner by Proposition~\ref{prop:FMPartner}.
\end{proof}

\begin{rmk}\rm
A similar technique appears in \cite{HL18}, where the second author constructs derived equivalences between K3 surfaces of degree $12$ via Cremona transformations of $\bP^4$.
\end{rmk}

\subsection{Actions on the loci of rational cubic fourfolds}
\label{subsect:actionOnRationalCubics}

In this section, we analyze how $\sigma_V$ acts on the codimension $1$ loci in $\cC_{20}$ that are known to parametrize rational cubic fourfolds, and prove that new rational cubic fourfolds arise this way. The main results of this section are Theorems~\ref{thm:newRationalCubics_details} and \ref{thm:rationalbiggerdisc}. In the following, we use the notation $\cC_{d_1,d_2}^{e}$ to denote the component of the intersection $\cC_{d_1}\cap\cC_{d_2}$ such that for a very general $X\in\cC_{d_1,d_2}^{e}$, the Gram matrix of $A(X)$ is $3\times3$ of determinant $e$.

\begin{thm}
\label{thm:newRationalCubics_details}
For each $d = 26, 38, 42$, the birational involution $\sigma_V$ maps a component of $\cC_{20}\cap\cC_d$ birationally onto a component of $\overline{\cC_{20}}\cap\cC_{d'}$, where $d'$ cannot be in the list
\[
    \{2,\, 6,\, 8,\, 14,\, 18,\, 26,\, 38,\, 42\}.
\]
The three components appear as three distinct irreducible divisors $\cC_{20,26}^{173}$, $\cC_{20,38}^{237}$, and $\cC_{20,42}^{277}$ in $\cC_{20}$ whose images under $\sigma_V$ are:
\[\xymatrix{
    \cC_{20}\cap\cC_{26}\supseteq\cC_{20,26}^{173} \ar@{-->}[r]^-\sim &
    \cC_{20,146}^{173}\subseteq\cC_{20}\cap\cC_{146},
}\]
\[\xymatrix{
    \cC_{20}\cap\cC_{38}\supseteq\cC_{20,38}^{237} \ar@{-->}[r]^-\sim &
    \cC_{20,62}^{237}\subseteq\cC_{20}\cap\cC_{62},
}\]
\[\xymatrix{
    \cC_{20}\cap\cC_{42}\supseteq\cC_{20,42}^{277} \ar@{-->}[r]^-\sim &
    \cC_{20,182}^{277}\subseteq\cC_{20}\cap\cC_{182}.
}\]
As a consequence, there exist at least three irreducible divisors in $\cC_{20}$ which parametrize rational cubic fourfolds which were not known before.
\end{thm}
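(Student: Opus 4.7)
The approach I would take is to analyze how $\sigma_V$ acts on the rank-three algebraic sublattices carried by cubics in $\cC_{20}\cap\cC_d$, using the explicit cohomology calculations on the resolution $Y$ from \S\ref{subsect:birationalVeroCubics}. For a very general $X$ in a component of $\cC_{20}\cap\cC_d$, the algebraic lattice $A(X)$ contains a saturated rank-three sublattice $K$ spanned by $h^2$, $v$, and one additional algebraic class $w$, realizing simultaneously the Veronese labelling of discriminant $20$ and a labelling of discriminant $d$. The isometry class of $K$, equivalently the rank-three discriminant $n = \disc K$, distinguishes the irreducible components of $\cC_{20}\cap\cC_d$; this is the interpretation of the upper index in the notation $\cC_{20,d}^{n}$.

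The plan is as follows. First, for each $d\in\{26, 38, 42\}$, I would exhibit a rank-three even lattice $K$ of discriminant $n\in\{173, 237, 277\}$ containing both the Veronese labelling and a labelling of discriminant $d$, and invoke the surjectivity of the period map for cubic fourfolds \cite{Laz10}*{Theorem~1.1} to produce a nonempty irreducible divisor $\cC_{20,d}^{n}$ realizing $K$ (up to ruling out the excluded discriminants $2, 6$ associated to singular loci, as done in the proof of Lemma~\ref{lemma:equalityTFM_MST}). Using Lemma~\ref{lemma:veroLine}, I would lift the third generator $w\in A(X)$ to a combination of $h^2$, $he$, $e^2$ in the cohomology of $Y$.

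Next, I would apply the change-of-basis $h' = 2h - e$ and $e' = 3h - 2e$ (equivalently Lemma~\ref{lemma:transRule}) to rewrite $w$ in terms of the generators attached to the companion cubic $X' = F_V(X)$, yielding an image class $w'\in A(X')$ and a rank-three image lattice $K'\subseteq A(X')$. Since the blowup square in \eqref{eqn:blowY} induces lattice isomorphisms on the algebraic parts, the discriminant of $K'$ equals that of $K$, hence $K'$ lives in a component of $\cC_{20}\cap\cC_{d'}$ for the value of $d'$ read off from the Gram matrix of $\langle {h'}^2, w'\rangle$; a direct arithmetic calculation should give $d' = 146, 62, 182$ for $d = 26, 38, 42$ respectively. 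Because $\sigma_V$ is a birational involution (Proposition~\ref{prop:birationalInvolC20}) restricting to an isomorphism of the corresponding Hassett-type divisors on a dense open, the maps displayed in the theorem are birational.

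The rationality consequence then follows easily: a very general $X\in\cC_{20,d}^{n}$ is rational (it lies in $\cC_d$ with $d = 26, 38, 42$, covered by \cites{RS19, RS19b}), and $F_V$ restricts to a birational map between $X$ and $X'$, so $X'$ is rational too; since $d' \in\{146, 62, 182\}$ is disjoint from the list $\{2, 6, 8, 14, 18, 26, 38, 42\}$ of previously known rational discriminants, the image divisor $\cC_{20,d'}^{n}$ provides a new irreducible rational family. The principal obstacle I anticipate is the combinatorial book-keeping: one must make the right choice of the additional class $w$ so that the induced labelling $\langle {h'}^2, w'\rangle$ on $X'$ yields exactly the advertised discriminant $d'$, and one must verify saturation both on the source and on the target side so that the Hassett divisors are identified correctly rather than merely the broader unions $\cC_{20}\cap\cC_{d'}$. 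Everything else is either lattice arithmetic or a reuse of results already established in \S\ref{sect:veroCubic} and \S\ref{subsect:birationalVeroCubics}.
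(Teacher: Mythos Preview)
Your approach is essentially the same as the paper's: choose a rank-three lattice $K$ realizing a component of $\cC_{20}\cap\cC_d$, push it through the transformation law coming from Lemma~\ref{lemma:transRule} on the common blowup $Y$, and read off the Gram matrix on the $X'$-side. The paper does exactly this via the explicit formula \eqref{eq:CremonaLatticeTransformation}. Two points, however, need correction.

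First, a minor omission: you never check that the source cubic $X$ actually contains a Veronese surface, i.e.\ that the chosen component of $\cC_{20}\cap\cC_d$ is not contained in $\cC_8$. Without this, Proposition~\ref{prop:containsVeronese} does not apply and $\sigma_V$ is not even defined at $X$. In the paper this is handled inside Proposition~\ref{prop:existalglattice}, which verifies $X\notin\cC_8$ for the specific lattices $M_\tau$ used.

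Second, and this is a genuine gap, your last paragraph conflates two different statements. You write that ``$d'\in\{146,62,182\}$ is disjoint from the list $\{2,6,8,14,18,26,38,42\}$'' and conclude that the image family is new. But finding \emph{one} labelling on $X'$ of discriminant $146$ does not prevent $A(X')$ from containing \emph{another} rank-two saturated sublattice through $h'^2$ of discriminant $14$, $26$, $38$, or $42$; a rank-three lattice carries infinitely many rank-two labellings. The content of the clause ``$d'$ cannot be in the list'' is precisely that \emph{no} labelling of $A(X')$ has discriminant in that set. The paper carries this out by observing that every labelling of $A(X')$ has discriminant given by a binary quadratic form (for $d=26$ it is $20b^2-98bc+146c^2$) and then verifying that this form does not represent any value in the forbidden list. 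Your proposal is missing this step entirely, and without it the rationality conclusion does not follow.
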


We start with analyzing the algebraic lattices of a very general member $X\in\cC_{20}\cap\cC_d$ and of its proper image $X'=F_V(X)$.
Assume $X\notin\cC_8$. Then $X$ contains a Veronese surface $V\subseteq X$ by Proposition~\ref{prop:containsVeronese}.
From now on, we denote by $\overline{h}\in\Pic(X)$ the class of a hyperplane section on $X$ and denote by $\overline{v}\in A(X)$ the class of the Veronese surface $V\subseteq X$.
Since $X$ is very general, the algebraic lattice $A(X)$ is of rank $3$, so there exists a surface $S\subseteq X$ whose class $\overline{s}\in A(X)$ does not lie in $\langle\overline{h}^2,\overline{v}\rangle$.
From the blowup formula, we have
\[
    A(Y)\cong\left<h^2, v, s, \ell\right>,
\]
where $Y=\mathrm{Bl}_VX$, the classes $h^2, v, s$ are the pullbacks of $\overline{h}^2,\overline{v}, \overline{s}$, respectively, and $\ell$ is induced from a line in $V\cong\bP^2$.
The Gram matrix of these classes is given by
\[
\begin{array}{c|cccc}
 A(Y)   & h^2 & v & s & \ell\\
    \hline
h^2 & 3 & 4 & \deg(S) & 0\\
v    & 4 & 12 & vs & 0\\
s    & \deg(S) & sv & s^2 & 0\\
\ell & 0 & 0 & 0 & -1.
\end{array}
\]

By Lemma~\ref{lemma:transRule}, we also have
\[
    A(Y)\cong\left<h'^2, v', s, \ell'\right>,
\]
where $h'$, $v'$, and $\ell'$ are induced from $X'$ in the same way as how we obtain $h$, $v$, and $\ell$.
The Gram matrix of these classes is
\[
\begin{array}{c|cccc}
A(Y)    & h'^2 & v' & s & \ell'\\
    \hline
h'^2 & 3 & 4 & 4\deg(S)-vs & 0\\
v'    & 4 & 12 & vs & 0\\
s    & 4\deg(S)-vs & sv & s^2 & 3\deg(S) - vs\\
\ell' & 0 & 0 & 3\deg(S) - vs & -1.
\end{array}
\]
Let $s'\colonequals s + (3\deg(S)-vs)\ell'$. Then we have
\[
    A(Y)\cong\left<h'^2, v', s', \ell'\right>,
\]
with Gram matrix
\[
\begin{array}{c|cccc}
A(Y)    & h'^2 & v' & s' & \ell'\\
    \hline
h'^2 & 3 & 4 & 4\deg(S)-vs & 0\\
v'    & 4 & 12 & vs & 0\\
s'    & 4\deg(S)-vs & sv & s^2+(3\deg(S) - vs)^2 & 0\\
\ell' & 0 & 0 & 0 & -1.
\end{array}
\]

By the blowup formula, we have an isometry
\[
A(Y)\cong A(X')\oplus A(V')(-1),
\]
where $A(V')(-1)$ is generated by the class $\ell'$. Therefore, the top-left $3\times3$ minor of the above matrix gives a Gram matrix of $A(X')$.
In this way, we can relate the algebraic lattices $A(X)$ and $A(X')$ as follows:
\begin{equation}
\label{eq:CremonaLatticeTransformation}
\begin{array}{c|ccc}
A(X)    & \overline{h}^2 & \overline{v} & \overline{s} \\
    \hline
\overline{h}^2 & 3 & 4 & A \\
\overline{v}    & 4 & 12 & B \\
\overline{s}    & A & B & C
\end{array}
\mapsto
\begin{array}{c|ccc}
A(X')    & \overline{h}'^2 & \overline{v}' & \overline{s}' \\
    \hline
\overline{h}'^2 & 3 & 4 & 4A-B \\
\overline{v}'    & 4 & 12 & B \\
\overline{s}'    & 4A-B & B & C+(3A - B)^2.
\end{array}
\end{equation}
Here the classes $\overline{h}'^2, \overline{v}', \overline{s}'\in A(X')$ are the images of $h'^2, v', s'\in A(Y)$ under the isometry above.

Note that the transformation law (\ref{eq:CremonaLatticeTransformation}) works for any class $\overline{s}\in A(X)$:
if one replaces $\overline{s}$ with any other class $\overline{s}_0\in A(X)$ such that $A(X)=\left<\overline{h}^2,\overline{v},\overline{s}_0\right>$ with Gram matrix
\[
\begin{array}{c|ccc}
A(X)    & \overline{h}^2 & \overline{v} & \overline{s}_0 \\
    \hline
\overline{h}^2 & 3 & 4 & D \\
\overline{v}    & 4 & 12 & E \\
\overline{s}_0    & D & E & F,
\end{array}
\]
then one can check that the lattice with Gram matrix
\[
\begin{pmatrix}
3 & 4 & 4D-E \\
4 & 12 & E \\
4D-E & E & F+(3D - E)^2 
\end{pmatrix}
\]
is isometric to $\left<\overline{h}'^2,\overline{v}',\overline{s}'\right>$.
Also, one can check that the transformation (\ref{eq:CremonaLatticeTransformation}) preserves discriminants.

Now we recall a useful property of algebraic lattices of cubic fourfolds.
Recall that the middle cohomology of a smooth cubic $X\subseteq\bP^5$ is
\[
H^4(X,\bZ)\cong I_{21,2}\colonequals
E_8^{\oplus2}\oplus U^{\oplus2}\oplus\langle1\rangle^{\oplus3}.
\]
Under this isomorphism, one can identify $A(X)$ with a sublattice in $I_{21,2}$.
Set $h^2\colonequals(1,1,1)\in \langle1\rangle^{\oplus3}\subseteq I_{21,2}$.

\begin{prop}[\cite{YY20}*{Proposition 2.3, Lemma 2.4}, \cite{Has16}*{\S2.3}]
\label{prop:YangYuHassett}
Let $M$ be a positive definite lattice of rank $r$ admitting a saturated embedding
\[
    h^2\in M\subseteq I_{21,2}.
\]
Let $\cC_M\subseteq\cC$ be the subset of cubic fourfolds $X$ with $M\subseteq A(X)\subseteq I_{21,2}$.
Then $\cC_M$ is nonempty if and only if the pairing $(x,x)$ is not $2$ for any $x\in M$.
In this case, $\cC_M\subseteq\cC$ is a codimension $r-1$ subvariety, and there exists an $X\in\cC_M$ with $A(X)=M$.
\end{prop}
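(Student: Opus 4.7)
The plan is to deduce the proposition from the global Torelli theorem for cubic fourfolds \cite{Voi86} together with Laza's description of the image of the period map \cite{Laz10}. Let $\mathcal{D}$ denote the period domain of weight-two Hodge structures on the primitive lattice $\langle h^2\rangle^\perp\subseteq I_{21,2}$, and let $\Gamma$ be the stabilizer of $h^2$ in $O(I_{21,2})$. The period map identifies $\cC$ with the open subset $(\mathcal{D}\setminus(\mathcal{D}_2\cup\mathcal{D}_6))/\Gamma$, where $\mathcal{D}_2$ and $\mathcal{D}_6$ are the Heegner divisors corresponding to the labellings of discriminants $2$ and $6$. The crucial input is that a period lies in $\mathcal{D}_2\cup\mathcal{D}_6$ if and only if the algebraic lattice of the associated Hodge structure contains some class $x$ with $(x,x)=2$.

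First I would reformulate the question in Hodge-theoretic terms: a cubic $X\in\cC_M$ corresponds to a Hodge structure on $I_{21,2}$ polarized by $h^2$ in which every element of $M$ is of type $(2,2)$. Such Hodge structures form the sub-period-domain
\[
    \mathcal{D}_M=\{\omega\in\mathcal{D}:(\omega,x)=0\text{ for all }x\in M\cap\langle h^2\rangle^\perp\},
\]
cut out by $r-1$ independent linear conditions, hence of codimension $r-1$ in $\mathcal{D}$. Since $M$ is positive definite of rank $r$ inside the signature-$(21,2)$ lattice $I_{21,2}$, the orthogonal complement $M^\perp$ has signature $(21-r,2)$, and $\mathcal{D}_M$ is nonempty and irreducible whenever $r\leq 21$.

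For the equivalence, if some $x\in M$ satisfies $(x,x)=2$, then $x$ is algebraic at every point of $\mathcal{D}_M$, so $\mathcal{D}_M\subseteq\mathcal{D}_2\cup\mathcal{D}_6$ and $\cC_M=\emptyset$. Conversely, if no such $x$ exists, a standard Noether--Lefschetz argument shows that inside the irreducible variety $\mathcal{D}_M$ the locus where $A(X)$ strictly contains $M$ is a countable union of proper analytic subvarieties, indexed by the classes of $M^\perp$ that become algebraic. A very general point of $\mathcal{D}_M$ avoids all of these; its algebraic lattice is then exactly $M$, contains no norm-$2$ class, and hence the period lies in the image of $\cC$. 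This produces $X\in\cC_M$ with $A(X)=M$, and the codimension statement follows from the codimension of $\mathcal{D}_M$ in $\mathcal{D}$ together with the open immersion $\cC\hookrightarrow\mathcal{D}/\Gamma$.

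The main obstacle is making the Noether--Lefschetz step precise: for each $y\in M^\perp\setminus\{0\}$ one must check that the locus where $y$ becomes of type $(1,1)$ is a proper analytic subvariety of $\mathcal{D}_M$, equivalently that some $\omega\in\mathcal{D}_M$ satisfies $(\omega,y)\neq 0$. This reduces to a dimension count using the signature $(21-r,2)$ of $M^\perp$ and is essentially the same argument as in the rank-$2$ case treated by Hassett in \cite{Has16}*{\S2.3}.
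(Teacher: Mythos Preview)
The paper does not prove this proposition at all: it is stated with the citations \cite{YY19}*{Proposition~2.3, Lemma~2.4} and \cite{Has16}*{\S2.3} and used as a black box. Your sketch is precisely the argument those references give---Laza's surjectivity of the period map onto $\mathcal{D}\setminus(\mathcal{D}_2\cup\mathcal{D}_6)$ modulo $\Gamma$, the identification of $\cC_M$ with the image of the Hodge locus $\mathcal{D}_M$, and a Noether--Lefschetz density argument showing that a very general point of $\mathcal{D}_M$ has algebraic lattice exactly $M$---so there is nothing to compare.

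One point worth sharpening: the equivalence you call the ``crucial input,'' namely that a period lies in $\mathcal{D}_2\cup\mathcal{D}_6$ if and only if the algebraic lattice contains a class of square~$2$, is not automatic. It is the content of Hassett's analysis in \cite{Has00}*{\S4.4} (summarized in \cite{Has16}*{\S2.3}): one checks by hand that a rank-two saturated sublattice $K\ni h^2$ of discriminant $2$ or $6$ always contains a norm-$2$ vector, and conversely that any norm-$2$ vector $x$ in the algebraic lattice spans, together with $h^2$, a sublattice whose saturation has discriminant $2$ or $6$ (using that $(h^2)^\perp\subseteq I_{21,2}$ is even to rule out the remaining possible values of $(x,h^2)$). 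With that in place your argument is complete.
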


Using this criterion, we can prove the following proposition, which is a generalization of \cite{YY20}*{Theorem 3.1}. The proposition will be used to prove the main theorems of this section.

\begin{prop}
\label{prop:existalglattice}
For every nonempty $\cC_{d_1}$ and $\cC_{d_2}$ with $d_1\neq d_2$, we find the following lower bound of the number of irreducible components of $\cC_{d_1}\cap\cC_{d_2}$.
Let $n_1=\lfloor\frac{d_1}{6}\rfloor$ and $n_2=\lfloor\frac{d_2}{6}\rfloor$.
Define
\[
    N=\ceil[\Big]{2\sqrt{n_1n_2-\min\{n_1,n_2\}}-1}.
\]
\begin{enumerate}[label=\textup{(\arabic*)}]
\setlength\itemsep{0pt}
    \item
    Suppose $d_1\equiv d_2\equiv2\ (\mathrm{mod}\ 6)$.
    Then $\cC_{d_1}\cap\cC_{d_2}$ has at least $2N+1$ irreducible components $\cC_{M_\tau}$ where $\tau$ is an integer index with $|\tau|\leq N$. For each $\tau$, there exists an $X\in\cC_{M_\tau}$ such that $A(X)$ is a rank $3$ lattice of discriminant
    \[
    \frac{d_1d_2-(1-3\tau)^2}{3}.
    \]
    \item
    Suppose $d_1d_2\equiv0\ (\mathrm{mod}\ 6)$.
    Then $\cC_{d_1}\cap\cC_{d_2}$ has at least $N+1$ irreducible components $\cC_{M_\tau}$ where $\tau$ is an integer index with $0\leq\tau\leq N$. For each $\tau$, there exists an $X\in\cC_{M_\tau}$ such that $A(X)$ is a rank $3$ lattice of discriminant
    \[
    \frac{d_1d_2}{3}-3\tau^2.
    \]
\end{enumerate}
Moreover, we have $X\notin\cC_8$ in either case if $d_1,d_2\neq8$. (Gram matrices for $A(X)$ in each case are given by \eqref{eqn:2and2_mod6}, \eqref{eqn:2and0_mod6}, and \eqref{eqn:0and0_mod6} in the proof.)
\end{prop}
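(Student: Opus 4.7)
The plan is to realise each asserted component as $\cC_{M_\tau}$ for an explicit rank-three lattice $M_\tau$ satisfying the hypotheses of Proposition~\ref{prop:YangYuHassett}. I would take $M_\tau=\langle h^2,T_1,T_2\rangle$ where $\langle h^2,T_i\rangle$ is the standard discriminant-$d_i$ labelling (as recorded in the proof of Lemma~\ref{lemma:dTcyclic}) and $T_1\cdot T_2=\tau$ is a free integer parameter. Depending on whether each $d_i$ is $\equiv 0$ or $\equiv 2\pmod 6$, this yields three explicit $3\times 3$ Gram matrices whose upper-left $2\times 2$ blocks are prescribed by the two labellings, the off-diagonal $(T_1,T_2)$-entry equals $\tau$, and the $(h^2,T_i)$-entries equal $0$ or $1$ according to $d_i\bmod 6$. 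A direct determinant expansion gives $\disc M_\tau=(d_1d_2-(1-3\tau)^2)/3$ in the case $d_1\equiv d_2\equiv 2\pmod 6$, and $\disc M_\tau=d_1d_2/3-3\tau^2$ in the remaining two cases, matching the formulas stated.

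To apply Proposition~\ref{prop:YangYuHassett}, I need to verify for $M_\tau$ positive definiteness, the existence of a saturated embedding $h^2\in M_\tau\subseteq I_{21,2}$, and the absence of vectors of norm $2$. Positive definiteness is equivalent to positivity of the discriminant, and the primitive embedding into $I_{21,2}$ is automatic given the generous rank and signature margin ($\operatorname{rk} M_\tau=3$ versus $23$, signature $(3,0)$ versus $(21,2)$) via Nikulin's theorem \cite{Nik79}*{Theorem 1.12.2}. The key arithmetic step is the no-norm-two check: writing $v=a h^2+b T_1+c T_2$ and completing the square in $a$ reduces $v^2=2$ to a binary Diophantine condition in $(b,c)$, and the identity
\[
4n_1\bigl(n_1b^2+\tau bc+n_2c^2\bigr)=(2n_1b+\tau c)^2+(4n_1n_2-\tau^2)c^2
\]
(with $n_i=\lfloor d_i/6\rfloor$) forces any such representation to satisfy $\tau^2\geq 4n_1n_2-4\min\{n_1,n_2\}$. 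This is precisely the numerical content of the threshold $N=\lceil 2\sqrt{n_1n_2-\min\{n_1,n_2\}}-1\rceil$, so the restriction $|\tau|\leq N$ rules out norm-two vectors.

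For the counting, I would observe that distinct $\tau$ give pairwise non-isomorphic $M_\tau$ and hence pairwise distinct components $\cC_{M_\tau}$. In the case $d_1\equiv d_2\equiv 2\pmod 6$, the map $\tau\mapsto(1-3\tau)^2$ is injective on $\bZ$ (since $3(\tau_1+\tau_2)=2$ has no integer solution), so the $2N+1$ values $\tau\in\{-N,\dots,N\}$ yield $2N+1$ distinct discriminants and therefore $2N+1$ distinct components. In the remaining two cases the involution $T_2\mapsto-T_2$ induces a lattice isomorphism $M_\tau\cong M_{-\tau}$, so the $N+1$ values $\tau\in\{0,\dots,N\}$ account for all non-isomorphic $M_\tau$. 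Finally, the generic $X\in\cC_{M_\tau}$ avoids $\cC_8$ when $d_1,d_2\neq 8$: a discriminant-$8$ labelling in $M_\tau$ would be a rank-two saturated sublattice $\langle h^2,S\rangle$ with $3S^2-(h^2\cdot S)^2=8$, and a finite enumeration over small coefficients of $S=\alpha h^2+\beta T_1+\gamma T_2$ (using the known norms $h^2\cdot T_i$ and $T_i^2$) confirms that no such $S$ exists outside the excluded values of $(d_1,d_2)$.

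The principal obstacle will be the no-norm-two step: one must control both the $a=0$ and $a\neq 0$ branches and extract the threshold $N$ uniformly across the three Gram matrices. The remaining ingredients---determinant arithmetic, the Nikulin embedding, and the $\cC_8$-avoidance verification---are essentially routine.
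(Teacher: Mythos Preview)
Your approach coincides with the paper's: both build rank-three lattices $M_\tau$ with exactly the Gram matrices you describe, verify the no-norm-two condition by the same completion-of-squares estimate, and invoke Proposition~\ref{prop:YangYuHassett}. Two technical points deserve tightening.

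First, the appeal to \cite{Nik79}*{Theorem~1.12.2} is not quite on target: that result concerns primitive embeddings of \emph{even} lattices into \emph{even} unimodular lattices, whereas $I_{21,2}$ is odd and (in the $d_i\equiv 2$ cases) $M_\tau$ itself is odd. One can work around this, but the paper simply sidesteps the issue by writing $M_\tau$ explicitly inside $E_8^{\oplus2}\oplus U_1\oplus U_2\oplus\langle1\rangle^{\oplus3}$ (e.g.\ in Case~1, $\alpha_1=(1,1,1)$, $\alpha_2=e_1+n_1f_1+\tau f_2+(0,1,0)$, $\alpha_3=e_2+n_2f_2+(0,0,1)$); saturatedness is then immediate from the shape of the coordinates, and the class $h^2$ lands on $(1,1,1)$ by construction. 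This is both shorter and avoids any odd/even bookkeeping.

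Second, your $\cC_8$-avoidance step says ``a finite enumeration over small coefficients \dots confirms that no such $S$ exists''. That suffices for any \emph{fixed} $(d_1,d_2,\tau)$, but the proposition is a statement for all admissible $(d_1,d_2,\tau)$ simultaneously, so you need a uniform bound. The paper carries this out: reducing to $S=\beta T_1+\gamma T_2$, the discriminant-$8$ condition becomes $d_1\beta^2+(6\tau-2)\beta\gamma+d_2\gamma^2=8$, and completing the square together with $|\tau|<2\sqrt{n_1(n_2-1)}$ and $d_1,d_2\geq 14$ forces $\gamma=0$, then $\beta=0$. Your outline is correct in spirit, but this estimate is what actually does the work.
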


\begin{proof}
The argument is similar to the proof of \cite{YY20}*{Theorem 3.1}.
Write
\[
I_{21,2}=E_8^{\oplus2}\oplus U_1\oplus U_2\oplus\langle1\rangle^{\oplus3}.
\]
Let $e_i,f_i$ be a basis of $U_i$ such that $e_i^2=f_i^2=0$ and $(e_i,f_i)=1$.
We denote elements in $\langle1\rangle^{\oplus3}$ by a triple of integers $(z_1,z_2,z_3)$.

\medskip
\noindent\textbf{Case 1}: $d_1\equiv d_2\equiv2\ (\mathrm{mod}\ 6)$.
\smallskip

Write $d_1=6n_1+2$ and $d_2=6n_2+2$. For each $|\tau|\leq N$, consider the lattice $M_\tau\subseteq I_{21,2}$ generated by
\begin{align*}
    \alpha_1 & \colonequals h^2 = (1,1,1), \\
    \alpha_2 & \colonequals e_1 + n_1f_1 + \tau f_2 + (0,1,0), \\
    \alpha_3 & \colonequals e_2 + n_2f_2 + (0,0,1).
\end{align*}
Then the Gram matrix of $M_\tau$ with respect to the basis $\{\alpha_1,\alpha_2,\alpha_3\}$ is
\begin{equation}
\label{eqn:2and2_mod6}
\begin{pmatrix}
3 & 1 & 1 \\
1 & 2n_1+1 & \tau \\
1 & \tau & 2n_2+1
\end{pmatrix}.
\end{equation}

We claim that $(x,x)\neq2$ for any $x\in M_\tau$. Write
$
x=a\alpha_1+b\alpha_2+c\alpha_3,
$
and assume without loss of generality that $1\leq n_1<n_2$. Then
\begin{align*}
    (x,x) & = 3a^2 + (2n_1+1)b^2 + (2n_2+1)c^2 + 2ab + 2ac + 2\tau bc \\
    & = a^2+(a+b)^2+(a+c)^2 + 2n_1b^2 + 2n_2c^2 + 2\tau bc \\
    & = a^2+(a+b)^2+(a+c)^2 + 2n_1(b+\frac{\tau}{2n_1}c)^2 + (2n_2-\frac{\tau^2}{2n_1})c^2.
\end{align*}
Since $|\tau|<2\sqrt{n_1n_2-n_1}$ by assumption, we have
$
    2n_2-\frac{\tau^2}{2n_1}>2.
$
Hence if $(x,x)=2$, then $c=0$. Thus
\[
2=(x,x)=2a^2+2n_1b^2+(a+b)^2.
\]
But there are no integers $a,b$ satisfying this equation.

One can check that $M_\tau\subseteq I_{21,2}$ is saturated. Therefore, by Proposition~\ref{prop:YangYuHassett}, the subvariety $\cC_{M_\tau}\subseteq\cC$ has codimension $2$, and there exists an $X\in\cC_{M_\tau}$ with $A(X)=M_\tau$ which has discriminant
\[
\frac{d_1d_2-(1-3\tau)^2}{3}.
\]
Also, observe that the sublattices
\[
h^2\in K_{d_1}\colonequals\langle\alpha_1,\alpha_2\rangle\subseteq M_\tau
\quad\text{and}\quad
h^2\in K_{d_2}\colonequals\langle\alpha_1,\alpha_3\rangle\subseteq M_\tau
\]
are both saturated. Therefore,
$
    \cC_{M_\tau}\subseteq\cC_{d_1}\cap\cC_{d_2}.
$

Now we show that such a cubic fourfold $X$ does not lie in $\cC_8$ if $d_1,d_2\neq8$. Let $2\leq n_1<n_2$. Observe from the Gram matrix of $A(X)=M_\tau$ that it contains a labelling of discriminant $8$ if and only if there exist integers $b$ and $c$ such that
\begin{align*}
    8 & =(6n_1+2)b^2+(6n_2+2)c^2+(6\tau-2)bc \\
    & = (6n_1+2)\Big(b+\frac{3\tau-1}{6n_1+2}c\Big)^2 + \Big(6n_2+2 - \frac{(3\tau-1)^2}{6n_1+2}\Big)c^2
\end{align*}
To show that there are no integer solutions, it suffices to show that
\[
6n_2+2 - \frac{(3\tau-1)^2}{6n_1+2}>8,
\]
or equivalently
\[
6(n_2-1)(6n_1+2)>(3\tau-1)^2.
\]
Recall that $|\tau|<2\sqrt{n_1n_2-n_1} = 2\sqrt{n_1(n_2-1)}$.
Using $n_1\leq n_2-1$, we obtain $|\tau|\leq2n_2-3$. Therefore,
\begin{align*}
    6(n_2-1)(6n_1+2) & = 36(n_2-1)n_1 + 12(n_2-1) \\
    & > 9\tau^2 + 6|\tau| + 1 \geq(3\tau-1)^2.
\end{align*}
This proves $X\notin\cC_8$.

The two remaining cases of possible $d_1,d_2$ can be proved by the same procedure, so we will only list a basis for $M_\tau$ and its Gram matrix and omit the details of the computations.

\medskip
\noindent\textbf{Case 2}: $d_1\equiv2\ (\mathrm{mod}\ 6)$ and $d_2\equiv0\ (\mathrm{mod}\ 6)$.
\smallskip

Write $d_1=6n_1+2$ and $d_2=6n_2$. For each $\tau$, consider the lattice $M_\tau\subseteq I_{21,2}$ generated by
\begin{align*}
    \alpha_1 & \colonequals h^2 = (1,1,1), \\
    \alpha_2 & \colonequals e_1 + n_1f_1 + \tau f_2 + (0,1,0), \\
    \alpha_3 & \colonequals e_2 + n_2f_2.
\end{align*}
The Gram matrix of $M_\tau$ with respect to the basis $\{\alpha_1,\alpha_2,\alpha_3\}$ is
\begin{equation}
\label{eqn:2and0_mod6}
\begin{pmatrix}
3 & 1 & 0 \\
1 & 2n_1+1 & \tau \\
0 & \tau & 2n_2
\end{pmatrix}.
\end{equation}

\medskip
\noindent\textbf{Case 3}: $d_1\equiv d_2\equiv0\ (\mathrm{mod}\ 6)$.
\smallskip

Write $d_1=6n_1$ and $d_2=6n_2$. For each $\tau$, consider the lattice $M_\tau\subseteq I_{21,2}$ generated by
\begin{align*}
    \alpha_1 & \colonequals h^2 = (1,1,1), \\
    \alpha_2 & \colonequals e_1 + n_1f_1 + \tau f_2, \\
    \alpha_3 & \colonequals e_2 + n_2f_2.
\end{align*}
The Gram matrix of $M_\tau$ with respect to the basis $\{\alpha_1,\alpha_2,\alpha_3\}$ is
\begin{equation}
\label{eqn:0and0_mod6}
\begin{pmatrix}
3 & 0 & 0 \\
0 & 2n_1 & \tau \\
0 & \tau & 2n_2
\end{pmatrix}.
\end{equation}
This finishes the proof.
\end{proof}

The following lemma is a simple calculation that will be used several times later on.

\begin{lemma}\label{lemma:disc34412}
    Let $X$ be a cubic fourfold such that $A(X)=\left<h^2,v,s\right>$ has Gram matrix
    $$
    \begin{pmatrix}
        3&4&A \\ 4&12&B \\ A&B&C
    \end{pmatrix}.
    $$
    Then $X\in\cC_d$ implies that there exist $b,c\in\bZ$ such that
    $$
        d=20b^2+(6B-8A)bc+(3C-A^2)c^2.
    $$
\end{lemma}

\begin{proof}
Let $b$ and $c$ be two integers.
The determinant of the Gram matrix of $\{h^2,bv+cs\}$ is given by $20b^2+(6B-8A)bc+(3C-A^2)c^2$.
\end{proof}

We are now ready prove the main theorems of this section.

\begin{proof}[Proof of Theorem~\ref{thm:newRationalCubics_details}]
Consider the intersection $\cC_{20}\cap\cC_{26}$.
By Proposition~\ref{prop:existalglattice} (and \eqref{eqn:2and2_mod6}), there exists a cubic fourfold $X$ such that $A(X)\cong M_0$ has Gram matrix
\[
\begin{pmatrix}
3 & 1 & 1 \\
1 & 7 & 0 \\
1 & 0 & 9
\end{pmatrix},
\text{ or equivalently (as lattices) }
\begin{pmatrix}
3 & 4 & 1 \\
4 & 12 & 1 \\
1 & 1 & 9
\end{pmatrix}
=: A.
\]
Set
$
\cC_{20,26}^{173}\colonequals\cC_{M_0}\subseteq\cC,
$
where $173$ is the determinant of the above matrices.
By Proposition~\ref{prop:existalglattice}, we have
\[
X\in\cC_{20,26}^{173}\subseteq\cC_{20}\cap\cC_{26}
\quad\text{and}\quad
X\notin\cC_8.
\]
Therefore, $X$ contains a Veronese surface $V$ by Proposition~\ref{prop:containsVeronese}. There exists a class $\overline{s}\in A(X)$ such that
$
A(X)=\left<\overline{h}^2, \overline{v}, \overline{s}\right>,
$
and the Gram matrix with respect to this basis is $A$.
By the transformation law \eqref{eq:CremonaLatticeTransformation}, the algebraic lattice of $X'\colonequals F_V(X)$ is isometric to
\[
\begin{pmatrix}
3 & 4 & 3 \\
4 & 12 & 1 \\
3 & 1 & 13
\end{pmatrix},
\text{ or equivalently }
\begin{pmatrix}
3 & 1 & 1 \\
1 & 7 & -16 \\
1 & -16 & 49
\end{pmatrix}
=: A'.
\]
By Proposition~\ref{prop:existalglattice} (and \eqref{eqn:2and2_mod6}) and also the fact that
\[
    |-16|\leq\ceil[\Big]{2\sqrt{3\cdot24-3} - 1}
\]
the cubic $X'$ lies in a codimension two irreducible component
\[
X'\in\cC_{20,146}^{173}\subseteq\cC_{20}\cap\cC_{146}.
\]
By Lemma~\ref{lemma:disc34412}, we have $X'\in\cC_{d'}$ only if
$
d'=20b^2-18bc+30c^2
$
for some $b,c\in\bZ$.
It is not hard to verify that
\[
    \{2,\, 6,\, 8,\, 14,\, 18,\, 26,\, 38,\, 42\}
    \cap
    \{20b^2-18bc+30c^2:b,c\in\bZ\}
    =\emptyset.
\]
For example, to rule out the case $d'=2$, consider the equations
\begin{align*}
    2 &= 20b^2-18bc+30c^2\\
    &= 20\left(b-\frac{9}{20}c\right)^2+\left(30-\frac{81}{20}\right)c^2.
\end{align*}
This forces $c=0$. By substituting this back, we get $2=20b^2$, which has no integer solution. The other cases can be verified in the same way. (As pointed out by the anonymous referee, a simpler argument is to use the fact that the smallest integers that can be represented by a \emph{reduced} positive definite binary form $Ax^2+Bxy+Cy^2$ are $0,A,C,A-|B|+C,A+|B|+C$, which in our case are $0,20,30,32,68$.)
This proves that $X'\in\cC_{20}\cap\cC_{146}$ is a rational cubic fourfold not known before. 

We claim that the map between the codimension $2$ components
\begin{equation}
\label{eqn:(20,26)-(20,146)}
\xymatrix{
    \cC_{20}\cap\cC_{26}\supseteq\cC_{20,26}^{173} \ar@{-->}[r]^{\sigma_V} &
    \cC_{20,146}^{173}\subseteq\cC_{20}\cap\cC_{146}
}
\end{equation}
is birational. First we show that it is dominant. Assume, to the contrary, that there exist infinitely many $X\in\cC_{20,26}^{173}$ mapped to the same point in $\cC_{20,146}^{173}$. Then the transcendental lattices $T(X)$ of these cubics are all isometric via resolutions as in \eqref{eqn:blowY} and the blowup formula. By \cite{AT14}*{Theorem~3.1}, the lattice $N(\cA_X)$ contains a copy of the hyperbolic plane
\[
    U=\begin{pmatrix}0&1\\1&0\end{pmatrix}.
\]
Hence the isometries among the $T(X)$ can be extended to isometries among the $\mukaiH(\cA_X,\bZ)$ by \cite{Nik79}*{Theorem 1.14.4}.
This gives a contradiction since for any cubic fourfold $X$, there are only finitely many cubic fourfolds $X'$ such that there is a Hodge isometry $\mukaiH(\cA_X,\bZ)\cong\mukaiH(\cA_{X'},\bZ)$; see \cite{Huy17}*{Corollary~3.5}. This proves that \eqref{eqn:(20,26)-(20,146)} is quasi-finite and thus is dominant. Now, the conclusion of the previous paragraph implies that a very general $X'\in\cC_{20,146}^{173}$ lies outside $\cC_8$, whence $\sigma_V$ is well defined at $X'$ by Proposition~\ref{prop:containsVeronese}. Because $\sigma_V$ is an involution, we get $\sigma_V^{-1}(X') = \sigma_V(X')$, which shows that the preimage of $X'$ under $\sigma_V$ consists of only one element. This proves that \eqref{eqn:(20,26)-(20,146)} is birational. As a consequence, $\cC_{20,146}^{173}$ contains a Zariski open subset that parametrizes rational cubics, which implies that all cubics in $\cC_{20,146}^{173}$ are rational by \cite{KT19}*{Theorem~1}.

We can apply the same argument to the intersections $\cC_{20}\cap\cC_{38}$ and $\cC_{20}\cap\cC_{42}$ to find new rational cubic fourfolds.
For $\cC_{20}\cap\cC_{38}$, we start with a cubic fourfold
$
X\in\cC_{20,38}^{237}\subseteq\cC_{20}\cap\cC_{38}
$
whose Gram matrix of $A(X)$ is
\[
\begin{pmatrix}
3 & 1 & 1 \\
1 & 7 & -2 \\
1 & -2 & 13
\end{pmatrix},
\text{ or equivalently }
\begin{pmatrix}
3 & 4 & 1 \\
4 & 12 & -1 \\
1 & -1 & 13
\end{pmatrix}.
\]
After Cremona transformation, the Gram matrix of $A(X')$ of the proper image $X'$ is
\[
\begin{pmatrix}
3 & 4 & 5 \\
4 & 12 & -1 \\
5 & -1 & 29
\end{pmatrix},
\text{ or equivalently }
\begin{pmatrix}
3 & 1 & 1 \\
1 & 7 & 8 \\
1 & 8 & 21
\end{pmatrix}.
\]
A straightforward computation shows that
$
X'\in\cC_{20,62}^{237}\subseteq\cC_{20}\cap\cC_{62}
$
and $X'\notin\cC_d'$ for any $d'\in\{2,\, 6,\, 8,\, 14,\, 18,\, 26,\, 38,\, 42\}$.

For $\cC_{20}\cap\cC_{42}$, we start with a cubic fourfold
$
X\in\cC_{20,42}^{277}\subseteq\cC_{20}\cap\cC_{42}
$
whose Gram matrix of $A(X)$ is
\[
\begin{pmatrix}
3 & 1 & 0 \\
1 & 7 & 1 \\
0 & 1 & 14
\end{pmatrix},
\text{ or equivalently }
\begin{pmatrix}
3 & 4 & 0 \\
4 & 12 & 1 \\
0 & 1 & 14
\end{pmatrix}.
\]
After Cremona transformation, the Gram matrix of $A(X')$ of the proper image $X'$ is
\[
\begin{pmatrix}
3 & 4 & -1 \\
4 & 12 & 1 \\
-1 & 1 & 15
\end{pmatrix},
\text{ or equivalently }
\begin{pmatrix}
3 & 1 & 1 \\
1 & 7 & 18 \\
1 & 18 & 61
\end{pmatrix}.
\]
A straightforward computation shows that
$
X'\in\cC_{20,182}^{277}\subseteq\cC_{20}\cap\cC_{182}
$
and $X'\notin\cC_d'$ for any $d'\in\{2,\, 6,\, 8,\, 14,\, 18,\, 26,\, 38,\, 42\}$.
\end{proof}

\begin{rmk}
One can find the images of the other components of
\[
\cC_{20}\cap\cC_{26},
\quad\cC_{20}\cap\cC_{38},
\quad\cC_{20}\cap\cC_{42}
\]
that we found in Proposition \ref{prop:existalglattice} under $\sigma_V$ using the same argument.
It turns out that most of these components are invariant under the action of $\sigma_V$.
Exceptions include the three components we found in Theorem~\ref{thm:newRationalCubics_details} and also the following:
\[\xymatrix{
    \cC_{20}\cap\cC_{26}\supseteq\cC_{20,26}^{\tau=4} \ar@{-->}[r]^-\sim &
    \cC_{20,38}^{\tau=-6}=\cC_{20,42}^{\tau=7}\subseteq\cC_{20}\cap\cC_{38}\cap\cC_{42},
}\]
\[\xymatrix{
    \cC_{20}\cap\cC_{26}\supseteq\cC_{20,26}^{\tau=-2} \ar@{-->}[r]^-\sim &
    \cC_{20,38}^{\tau=6}\subseteq\cC_{20}\cap\cC_{38},
}\]
\[\xymatrix{
    \cC_{20}\cap\cC_{38}\supseteq\cC_{20,38}^{\tau=0} \ar@{-->}[r]^-\sim &
    \cC_{20,42}^{\tau=3}\subseteq\cC_{20}\cap\cC_{42}.
}\]
Here $\tau$ is the parameter of irreducible components in $\cC_{20}\cap\cC_{d}$ we used in Proposition \ref{prop:existalglattice}.
Note that these additional codimension $2$ loci that $\sigma_V$ moves do not provide new rational cubic fourfolds.
\end{rmk}

\begin{rmk}
\label{rmk:C20C14}
One can prove that the intersection $\cC_{20}\cap\cC_{14}$ has nine irreducible components $\cC_{M_\tau}$ by the criterion in Proposition \ref{prop:YangYuHassett}, where $\tau$ is an integer index with $|\tau|\leq4$. The Gram matrix of the algebraic lattice of a general cubic in $\cC_{M_\tau}$ is
\[
\begin{pmatrix}
    3 & 1 & 1 \\
    1 & 7 & \tau \\
    1 & \tau & 5
\end{pmatrix},
\text{ or equivalently }
\begin{pmatrix}
3 & 4 & 1 \\
4 & 12 & \tau+1 \\
1 & \tau+1 & 5
\end{pmatrix}.
\]
Note that Proposition \ref{prop:existalglattice} only guarantees the existence of irreducible components for $|\tau|\leq3$.
Under the action of $\sigma_V$, six of the nine components are mapped to $\cC_{14}$, $\cC_{26}$, $\cC_{38}$, or $\cC_{42}$.
The remaining three are $\cC_{M_0}$, $\cC_{M_{4}}$, and $\cC_{M_{-4}}$. The map $\sigma_V$ acts on $\cC_{M_0} = \cC_{20,14}^{\tau=0}$ as
\[\xymatrix{
    \cC_{20}\cap\cC_{14}\supseteq\cC_{20,14}^{\tau=0} \ar@{-->}[r]^-\sim &
    \cC_{20,62}^{\tau=-10}=\cC_{20,18}^{\tau=3}\subseteq\cC_{20}\cap\cC_{62}\cap\cC_{18}.
}\]
In this case, the image component belongs to the list of infinitely many divisors in $\cC_{18}$ found in \cite{AHTV19}.
The other two components $\cC_{M_{-4}}$ and $\cC_{M_{4}}$ are contained in $\cC_8$ and $\cC_{12}$ respectively. Suppose that a general member of these components contains a Veronese surface, then $\sigma_V$ acts on them as
\[\xymatrix{
    \cC_{20}\cap\cC_{14}\supseteq\cC_{20,14}^{\tau=4} \ar@{-->}[r]^-\sim &
    \cC_{20,26}^{\tau=-6}\subseteq\cC_{20}\cap\cC_{26},
}\]
\[\xymatrix{
    \cC_{20}\cap\cC_{14}\supseteq\cC_{20,14}^{\tau=-4} \ar@{-->}[r]^-\sim &
    \cC_{20,6}^{\tau=1}\subseteq\overline{\cC_{20}}\cap\cC_{6},
}\]
Note that the image cubics in the second case would be singular. 
\end{rmk}

Finally, given an admissible $d\geq 14$ with $d\equiv2\ (\mathrm{mod}\ 6)$, we prove the result below by using the component of $\cC_{20}\cap\cC_d$ marked by Gram matrix \eqref{eqn:2and2_mod6} with $\tau=0$. In the case that $d\equiv0\ (\mathrm{mod}\ 6)$, we obtain the same result by using the component marked by Gram matrix \eqref{eqn:2and0_mod6} with $\tau=1$.

\begin{thm}
\label{thm:rationalbiggerdisc}
Let $d\geq 14$ be an even integer which is admissible, i.e. satisfies \eqref{eqn:discriminantK3}.
Then $\sigma_V(\cC_{20}\cap\cC_d)$ contains a component $D$ such that
\begin{enumerate}[label=\textup{(\arabic*)}]
\setlength\itemsep{0pt}
    \item $D\not\subseteq\cC_{d'}$ for any admissible $d'$ with $d'\leq d$.
    \item $D\subseteq\cC_{d'}$ for some admissible $d'$ with $d'>d$.
\end{enumerate}
\end{thm}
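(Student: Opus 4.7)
The plan is to push a specific codimension-two component of $\cC_{20}\cap\cC_d$ through the Cremona involution and analyze the labellings of the resulting algebraic lattice using the transformation law \eqref{eq:CremonaLatticeTransformation}.

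Following the hint, I will take the component $\cC_{M_\tau}\subseteq\cC_{20}\cap\cC_d$ of Proposition~\ref{prop:existalglattice} with $\tau=0$ when $d\equiv 2\pmod 6$ (using \eqref{eqn:2and2_mod6}) and with $\tau=1$ when $d\equiv 0\pmod 6$ (using \eqref{eqn:2and0_mod6}). Setting $\overline{v}\colonequals\alpha_1+\alpha_2$ and $\overline{s}\colonequals\alpha_3$ identifies $\langle\overline{h}^2,\overline{v}\rangle$ with the discriminant-$20$ labelling, and a short computation yields the Gram matrix of $A(X)$ for a generic $X$ in the chosen component, namely
\[
\begin{pmatrix} 3 & 4 & 1 \\ 4 & 12 & 1 \\ 1 & 1 & (d+1)/3 \end{pmatrix}
\qquad\text{or}\qquad
\begin{pmatrix} 3 & 4 & 0 \\ 4 & 12 & 1 \\ 0 & 1 & d/3 \end{pmatrix}
\]
in the respective cases. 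Applying \eqref{eq:CremonaLatticeTransformation} then produces an explicit Gram matrix for $A(X')$ where $X'=\sigma_V(X)$, and I let $D$ be the component of $\sigma_V(\cC_{20}\cap\cC_d)$ containing this image.

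A standard saturation argument (using that $\gcd(b,c)=1$ already implies saturation in the rank-three lattice $A(X')$) shows every labelling of $A(X')$ has the form $\langle\overline{h}'^2,\,b\overline{v}'+c\overline{s}'\rangle$ with $\gcd(b,c)=1$, and its discriminant equals a positive definite binary form
\[
f(b,c)=\begin{cases}20b^2-18bc+(d+4)c^2 & \text{if } d\equiv 2\pmod 6,\\ 20b^2+14bc+(d+2)c^2 & \text{if } d\equiv 0\pmod 6.\end{cases}
\]
Completing the square shows $f(b,c)>d$ whenever $c\neq 0$, so the only labellings with $f(b,c)\leq d$ come from $(b,c)=(\pm 1,0)$ with $f=20$; since $4\mid 20$, this is not admissible, which establishes part~(1).

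Part~(2) is the technical heart: I must exhibit a coprime pair $(b,c)$ such that $f(b,c)>d$ is not divisible by $4$, by $9$, or by any prime $\equiv 2\pmod 3$. The admissibility of $d$ forces $d\equiv 2\pmod 4$ and $d\not\equiv 0\pmod 9$, which lets me arrange the mod-$4$ and mod-$9$ conditions by restricting $(b,c)$ to an appropriate residue class modulo $36$. The hard step will be to rule out all primes $\equiv 2\pmod 3$ simultaneously; my plan is a case analysis on the residues of $d$ modulo a finite product of such small primes, exploiting that $f$ attains every residue class modulo any fixed modulus coprime to its discriminant in infinitely many coprime pairs, so that an explicit $(b,c)$ can be chosen to avoid the finitely many obstructing small primes $\equiv 2\pmod 3$. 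This case analysis is the main obstacle, and it is where the admissibility of $d$ (as opposed to merely $d\equiv 0,2\pmod 6$) does its real work.
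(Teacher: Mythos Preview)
Your setup and Part~(1) are essentially identical to the paper's: the same components are chosen, the same Gram matrices appear after \eqref{eq:CremonaLatticeTransformation}, and the conclusion that any labelling of discriminant $\leq d$ on $A(X')$ has $d'$ divisible by $20$ (hence not admissible) is exactly what the paper proves.

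The gap is in Part~(2). Your plan is to find an explicit coprime pair $(b,c)$ with $f(b,c)$ admissible, i.e.\ not divisible by $4$, $9$, or any prime $p\equiv 2\pmod 3$. You propose to handle the primes $\equiv 2\pmod 3$ by ``avoiding the finitely many obstructing small primes'' via a residue argument. But there are \emph{infinitely} many such primes, and admissibility requires avoiding \emph{all} of them as divisors, not just a finite list; a value of $f$ could be squarefree, coprime to any fixed modulus you prescribe, and still have a large prime factor $\equiv 2\pmod 3$. Deciding whether a binary quadratic form represents a number whose only odd prime factors are $\equiv 1\pmod 3$ is a genuine arithmetic question, not resolvable by a finite case split on residues of $d$.

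The paper bypasses this number theory entirely. It argues Hodge-theoretically: since $d$ is admissible, $N(\cA_X)$ contains a hyperbolic plane $U$ by \cite{AT14}. The blowup resolution of $\sigma_V$ gives $T(\cA_X)\cong T(\cA_{X'})$, and because a $U$-summand is present this isometry extends to the full Mukai lattice, whence $N(\cA_{X'})$ also contains $U$. By \cite{AT14} again this forces $X'\in\cC_{d'}$ for some admissible $d'$; Part~(1) then gives $d'>d$. So the admissibility of $d$ is used not through the arithmetic of the form $f$, but through the Addington--Thomas characterization ``admissible $\Leftrightarrow$ $U\hookrightarrow N(\cA_X)$''.
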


\begin{proof}
Let $d\geq 14$ be an integer in the list \eqref{eqn:discriminantK3}.
First, we claim that there exists a component $D\subseteq\sigma_V(\cC_{20}\cap\cC_d)$ such that a very general cubic $X'\in D$ has the following property:
\[
\text{If } X'\in\cC_{d'} \text{ and } d'\leq d, \text{ then } 20\mid d'.
\]
The first part of Theorem~\ref{thm:rationalbiggerdisc} then follows from the claim because then any such $d'$ would not satisfy (\ref{eqn:discriminantK3}).

\medskip
\noindent\textbf{Case 1}: $d\equiv2\ (\mathrm{mod}\ 6)$.
\smallskip

Write $d=6n+2$. By Proposition~\ref{prop:existalglattice}, there exists a component of $\cC_{20}\cap\cC_{d}$ such that the Gram matrix of $A(X)$ of a general cubic $X$ in the component is given by
\[
\begin{pmatrix}
3 & 1 & 1 \\
1 & 7 & 0 \\
1 & 0 & 2n+1
\end{pmatrix},
\text{ or equivalently }
\begin{pmatrix}
3 & 4 & 1 \\
4 & 12 & 1 \\
1 & 1 & 2n+1
\end{pmatrix}.
\]
Following the proof of Theorem \ref{thm:newRationalCubics_details}, the map $\sigma_V$ sends this component birationally to a component $D\subseteq\sigma_V(\cC_{20}\cap\cC_d)$.
The Gram matrix of $A(X')$ of a general cubic $X'\in D$ is given by
\[
\begin{pmatrix}
3 & 4 & 3 \\
4 & 12 & 1 \\
3 & 1 & 2n+5
\end{pmatrix}.
\]
By Lemma~\ref{lemma:disc34412}, the cubic $X'$ is in $\cC_{d'}$ only if
$
d'=20b^2-18bc+(6n+6)c^2
$
for some $b,c\in\bZ$.
One can check that there are no such integers $b,c$ satisfying the equation if $d'\leq d$ and $20\nmid d'$.

\medskip
\noindent\textbf{Case 2}: $d\equiv0\ (\mathrm{mod}\ 6)$.
\smallskip

Write $d=6n$. By Proposition~\ref{prop:existalglattice}, there exists a component of $\cC_{20}\cap\cC_{d}$ such that the Gram matrix of $A(X)$ of a very general cubic $X$ in this component is
\[
\begin{pmatrix}
3 & 1 & 0 \\
1 & 7 & 1 \\
0 & 1 & 2n
\end{pmatrix},
\text{ or equivalently }
\begin{pmatrix}
3 & 4 & 0 \\
4 & 12 & 1 \\
0 & 1 & 2n
\end{pmatrix}.
\]
The map $\sigma_V$ sends this component birationally onto $D\subseteq\sigma_V(\cC_{20}\cap\cC_d)$.
The Gram matrix of $A(X')$ of a general $X'\in D$ is
\[
\begin{pmatrix}
3 & 4 & -1 \\
4 & 12 & 1 \\
-1 & 1 & 2n+1
\end{pmatrix}.
\]
By Lemma~\ref{lemma:disc34412}, the cubic $X'$ is in $\cC_{d'}$ only if
$
d'=20b^2+14bc+(6n+2)c^2
$
for some $b,c\in\bZ$.
One can check that there are no such integers $b,c$ satisfying the equation if $d'\leq d$ and $20\nmid d'$.
This concludes the proof of the claim. Therefore, the first part of Theorem~\ref{thm:rationalbiggerdisc} holds.

Next, we show that $D\subseteq\cC_{d'}$ for some admissible $d'$.
Assume, to the contrary, that $D\not\subseteq\cC_{d'}$ for any such $d'$. Then
$
\bigcup_{d':\text{ admissible}}(D\cap\cC_{d'})
$
is a proper subset of $D$, so a very general member of $D$ does not lie in $\cC_{d'}$ for any admissible $d'$.
Therefore, to prove the second part of Theorem \ref{thm:rationalbiggerdisc}, it suffices to show that a very general member of $D$ lies in $\cC_{d'}$ for some admissible $d'$.

Let $X'=\sigma_V(X)$ be a very general cubic in $D$, where $X\in\cC_{20}\cap\cC_{d}$ for some admissible $d$.
By the blowup formula, we have $T(X) \cong T(X')$ and thus
$
T(\cA_X)\cong T(\cA_{X'}).
$
By \cite{AT14}*{Theorem~3.1 \& Proposition~2.4}, since $d$ is admissible, the lattice $N(\cA_X)$ contains a copy of the hyperbolic plane
\[
U=
\begin{pmatrix}
    0 & 1\\
    1 & 0
\end{pmatrix}
\]
so the isometry $T(\cA_X)\cong T(\cA_{X'})$ extends to an isometry
\[
\mukaiH(\cA_{X},\bZ)\cong\mukaiH(\cA_{X'},\bZ).
\]
Since $A(X')\subseteq H^4(X',\bZ)$ is saturated, the sublattice
$N(\cA_{X'})\subseteq\mukaiH(\cA_{X'},\bZ)$
is also saturated by \cite{AT14}*{Proposition~2.5~(2)}.
Therefore, 
\[
N(\cA_{X'})=N(\cA_{X'})^{\perp\perp}=T(\cA_{X'})^\perp\subseteq\mukaiH(\cA_{X'},\bZ)
\]
also contains a copy of the hyperbolic plane.
Again by \cite{AT14}*{Theorem~3.1}, this implies that $X'\in\cC_{d'}$ for some admissible $d'$.
\end{proof}

\bigskip
\bibliography{CremonaCubic_bib}
\bibliographystyle{alpha}

\ContactInfo
\end{document}